\newcommand{\labbel}[1]{\label{#1} [[{\bf #1}]]}  
\renewcommand{\labbel}{\label}
\newcommand{\arxiv}{}
\def\dashsubseteq{\mathrel{
  \stackinset{l}{0pt}{c}{}{\wrule[2pt]{5pt}{.5pt}}{
  \stackinset{l}{1.6pt}{c}{}{\wrule[-10pt]{.5pt}{3pt}}{
  \stackinset{l}{0.5pt}{c}{1.5pt}{\rotatebox[origin=center]{45}{\wrule[2pt]{.5pt}{3pt}}}{
  \stackinset{l}{0.5pt}{c}{}{\rotatebox[origin=center]{-45}{\wrule[-5pt]{.5pt}{3pt}}}{
  \stackinset{l}{3.35pt}{c}{}{\wrule[-4pt]{.5pt}{10pt}}{
  \stackinset{l}{5.1pt}{c}{}{\wrule[-4pt]{.5pt}{10pt}}{
 \subseteq}}}}}}
}}
\newcommand\wrule[3][0pt]{\textcolor{white}{\rule[#1]{#2}{#3}}}
\newtheorem{theorem}{Theorem}[section]
\newtheorem{lemma}[theorem]{Lemma}
\newtheorem{proposition}[theorem]{Proposition} 
\newtheorem{corollary}[theorem]{Corollary}
\newtheorem*{claim*}{Claim}
\newtheorem*{theorem*}{Theorem}
\newtheorem*{proposition*}{Proposition}
\newtheorem*{corollary*}{Corollary}
\newtheorem*{lemma*}{Lemma}
\newtheorem*{scholion*}{Scholion}
\theoremstyle{definition}
\newtheorem{definition}[theorem]{Definition}
\theoremstyle{remark}
\newtheorem{remark}[theorem]{Remark}
\newtheorem*{remark*}{Remark}
\newtheorem*{remarks*}{Remarks}
\newtheorem{example}[theorem]{Example}
\newtheorem{examples}[theorem]{Examples}
\newtheorem*{observation*}{Observation}
\numberwithin{equation}{section}
\begin{document}

\title{Preservation of  superamalgamation by expansions}

\author{Paolo Lipparini} 
\address{Dipartimento di Supermatematica\\Viale della  Ricerca
 Scientifica\\Universit\`a di Roma ``Tor Vergata'' 
\\I-00133 ROME ITALY}

\email{lipparin@axp.mat.uniroma2.it}

\subjclass{03C52;  06F99;  06A15;  03G25}

\keywords{Closure operation; isotone operation;
antitone operation;  amalgamation property;
superamalgamation; semilattice; Fra\"\i ss\'e limit; complete lattice;
Boolean algebra; decidable theory}

\thanks{Work performed under the auspices of G.N.S.A.G.A. Work 
partially supported by PRIN 2012 ``Logica, Modelli e Insiemi''.
The author acknowledges the MIUR Department Project awarded to the
Department of Mathematics, University of Rome Tor Vergata, CUP
E83C18000100006.}

\begin{abstract}
The superamalgamation property is a strong form of 
the amalgamation property which applies
 to ordered structures; it has found many applications
in algebraic logic. 
We show that 
superamalgamation has some interest also from
the pure model-theoretical point of view.
Under a completion assumption, we prove that the superamalgamation property
for some class of ordered structures  implies  strong amalgamation 
for classes with added  
 operations, including isotone, idempotent, extensive,
antitone and closure operations. 

Thus, for example, partially ordered sets, 
semilattices, lattices, Boolean algebras and Heyting algebras
with an isotone extensive operation
 (or an operation as above) have the strong amalgamation property.
The theory of join semilattices with a closure operation has 
model completion.  The set of  universal  
 consequences
of the theory of Boolean algebras (or posets,
semilattices, distributive lattices)
 with a closure or isotone, etc.,
 operation is decidable.
\end{abstract}

\maketitle

\section{Introduction} \labbel{intro}

The amalgamation property (AP) is a classical tool
in algebra \cite{KMPT} 
and has found many applications in logic,  particularly, 
in model theory \cite{H} and algebraic logic \cite{CP,GM,GG,MMT}. 

B. J{\'o}nsson \cite{J1} asked whether there are 
 \emph{general  results  that  assert  that  if  an  elementary  
class  is  characterized 
by  axioms of such and  such a form,  then  this class has  the  amalgamation 
property}.
J{\'o}nsson's problem, as stated, seems to have a positive solution
only in an incredibly small number of cases.
On the other hand, there are many situations in which
theories which are already known 
to have the amalgamation property can be combined  \cite{GG,apu}
 or modified in order to produce many other theories with 
AP. Classical examples include
 adding operators, e.~g.,  \cite{lop,MS,W,Z}.

Here we present a  general 
result
in the above wake.
If some class of ordered structures
has the superamalgamation property
and satisfies an appropriate completion property,
then we can add operations with
a number of properties (isotone,
antitone,  extensive, idempotent, closure\dots) 
in such a way that  amalgamation is preserved.
 The added operations possibly depend on many arguments, e. g., a
$4$-ary operation which is isotone on the first two components and antitone on the
last two components.
The assumptions apply to the classes of
  Boolean
algebras, Heyting algebras,  lattices, join semilattices,
 meet semilattices and partially ordered sets (henceforth, \emph{posets}, for short). 
 See Theorem \ref{superap} and
Corollary \ref{corsuperap} below.    
Recall that the superamalgamation property is a natural strengthening of 
AP for ordered structures, and has found significant applications
in algebraic logic \cite{GM,KH}. 
Our results show that 
superamalgamation has some interest also from
the pure model-theoretical point of view, even when
dealing with AP alone.

As  applications,  we show that  the theory of join semilattices
 with a closure operation has 
model completion  (Corollary \ref{smk})    
and we generally get the existence
of Fra\"\i ss\'e limits for the subclass of finite members
of the classes under consideration  (Corollary \ref{corsuperap}(3)).  
As a consequence of our main Extension Lemma \ref{exte}, 
we prove that the 
set of  universal  
 consequences
of the theory of Boolean algebras 
 (or posets, 
semilattices, distributive lattices)  
with  a finite number of operations of the kind taken into consideration
  is decidable. 
 See Corollary \ref{badecid}.   

The paper is divided as follows. 
Section \ref{prel}  is devoted to preliminaries 
 and auxiliary results.    
In Section \ref{extsec}
we present the Extension Lemma \ref{exte}, which broadly generalizes former
results. Given a partial function $G$ on some complete lattice,
we devise the exact conditions under which
$G$ can be extended to a total function satisfying one of
the properties taken into account.
Classical embedding results for posets then
imply extension results for various kinds of posets with
an operation, for example, semilattices or lattices
with a closure operation. 
 See Corollary \ref{plap}.  

In Section \ref{sapimp}
we prove our more general result  
  about amalgamation, Theorem \ref{superap}:  
if $\mathcal S$ is a class of partially ordered
structures, $\mathcal S$ has the superamalgamation property and
every member of $\mathcal S$ can be embedded into a complete
 member of $\mathcal S$,
then superamalgamation is preserved by adding a closure operation,
or an isotone (extensive, idempotent, etc.) operation.  
\emph{Complete} is always meant in the lattice theoretical sense.
In particular, the result applies when $\mathcal S$ is one of the
following classes: posets,
semilattices,  lattices, Boolean algebras, Heyting algebras
 (Corollary \ref{corsuperap}(1)).    
We can also add simultaneously many operations at a time,
possibly with some comparability conditions
 (Corollary \ref{corsuperap}(2)).    
As a consequence, we get the existence of Fra\"\i ss\'e limits
in the case of a finite language
 (Corollary \ref{corsuperap}(3)).    
 Since the theory of  join semilattices with 
a closure operation is locally finite,  we get the existence of
a model completion
 (Corollary \ref{smk}).

 Section \ref{lfsec} exploits the power
 of the extension 
 lemma from Section \ref{extsec}
and does not rely on the amalgamation property.
 In Theorem \ref{decid}    
we consider  a universal locally finite theory $T$  
with an order relation such that every finite model of $T$ 
is lattice-ordered  (or, more generally,
can be extended to a finite lattice-ordered model). 
If $T^+$  extends $T$ in a finite language with further operations and
with axioms asserting that the operations  satisfy one of the conditions taken 
into account, then the set of  universal  
  consequences of $T^+$  is decidable
(this would be obvious for $T$, but is far from being obvious for 
$T^+$, which is not necessarily locally finite). This applies to 
Boolean algebras 
 (or posets,
semilattices, distributive lattices)   
with a finite number of further operations.
 See Corollary \ref{badecid}.  
Section \ref{fur} presents a few further remarks; in particular,
we show that, in some special cases, the completeness assumptions 
in Sections \ref{extsec} -  \ref{sapimp}
 can be relaxed to some extent; nevertheless, counterexamples
are provided showing that some degree of completeness
is necessary.

\section{Preliminaries} \labbel{prel}

We shall consider the following properties of 
a unary operation $K$  on a partially ordered set, \emph{poset}, for short: 
\begin{align} \labbel{extd} 
& a \leq Ka && \text{ \emph{extensive}, }
\\
 \labbel{contrad} 
& Ka \leq a && \text{ \emph{contractive}, }
\\ 
\labbel{isod}
& a \leq b \text{ implies } Ka \leq Kb  && \text{  \emph{isotone}, }
\\ 
\labbel{antidef}
& a \leq b \text{ implies } Kb \leq Ka  && \text{  \emph{antitone}, }
\\
\labbel{ided}
&  KKa=Ka && \text{ \emph{idempotent}, }
\\
\labbel{invod}
&  KKa=a && \text{ \emph{involutive}. }
\end{align} 
The conditions are supposed to hold for all the elements $a,b$ of the poset 
$\mathbf P$  under consideration.
 We frequently consider a partial operation $G$  defined 
on some subset $D$ of $P$ and we say that 
$G$ is extensive or contractive if 
\eqref{extd} or \eqref{contrad} holds for all 
$a \in D$. 

 We shall also deal with $n$-ary  operations.
An $n$-ary  operation is \emph{isotone on the $i^{\text{th}} $
component} if it is isotone as a unary operation when
the argument of the $i^{\text{th}} $ component varies and
the other arguments are kept fixed. Operations 
 \emph{antitone on the $i^{\text{th}} $ component} are
defined correspondingly.

A \emph{closure} (\emph{interior}) operation
is an extensive (contractive), isotone and idempotent operation.  
In the presence
of a semilattice operation, some authors include 
an additivity or multiplicativity requirement
in the definitions of a closure and an interior operation.
We shall adopt the more general convention
\cite{E,Ha} according to which no additivity
or multiplicativity assumption is made,
unless explicitly mentioned otherwise.

We shall generally deal with
\emph{ordered structures}, namely, posets
with possibly additional
operations or relations.
For definiteness, such structures will be considered as \emph{models}
in the sense of classical model theory \cite{H}, but
 in Section \ref{sapimp}      
there are more general possibilities,
e.~g., topological or infinitary structures.
The precise setting will generally not be relevant for our purposes,
insofar
as the meaning of type (or signature, or language) and of embedding 
are clear. When some poset is, say, a lattice,
 we shall explicitly mention whether
we are considering   order-embeddings,
or the stronger notion of lattice-embeddings,
that is, embeddings preserving the lattice operations. 
Lattice operations shall be denoted by 
 $\vee$ and $\wedge$.    
Their infinitary extensions are indicated by 
$\sum$ and  $\prod$.

For the sake of simplicity, in the following
definitions classes of structures
shall be always supposed to be closed under isomorphism.
If $\mathbf A$ and $\mathbf B$ 
are structures of the same type,
with base sets, respectively, $A$ and $B$,
then $ \mathbf A $ 
is said to be a \emph{substructure} of 
$   \mathbf B$, in symbols,
$ \mathbf A \subseteq  \mathbf B$,
if $A \subseteq B$ as sets and the
inclusion is an embedding of 
$ \mathbf A $ into $   \mathbf B$.

\begin{definition} \labbel{sap}   
 A class $\mathcal  S$  of structures  of the same type
and closed under isomorphism
has the \emph{strong amalgamation property} (SAP)
 if the following holds. Whenever 
$\mathbf A, \mathbf B, \mathbf C \in \mathcal  S$,
 $ \mathbf C \subseteq  \mathbf A$,
 $ \mathbf C \subseteq  \mathbf B$
and $C=A \cap B$,
then there is a structure
$\mathbf D \in \mathcal  S$ such that 
$ \mathbf A \subseteq  \mathbf D$ and
$ \mathbf B \subseteq  \mathbf D$.
\begin{equation*}
\phantom{\qquad \qquad  \text{ (with $C=A \cap B$)}  }
 \begin{matrix} 
 \mathbf D \cr
$\rotatebox[origin=c]{45}{$ \dashsubseteq $}$
 \ \quad\  
$\reflectbox {\rotatebox[origin=c]{45}{$ \dashsubseteq $}}$ \cr
\mathbf A \quad \quad \quad \quad \mathbf B \cr
$\rotatebox[origin=c]{-45}{$ \supseteq $}$ \ \quad\ 
$\rotatebox[origin=c]{45}{$ \subseteq $}$
 \cr
 \mathbf C
 \end{matrix}   
\qquad \qquad  \text{ (with $C=A \cap B$)}  
\end{equation*}

 In the case of ordered structures,
the \emph{superamalgamation property}
asserts that, under the above assumptions, 
there exists some $\mathbf D$ as above 
with the additional property that, 
for every  $a \in A \setminus B $ and $b \in B \setminus A $, 
  \begin{enumerate}[(a)]
\item 
if $a \leq _{\mathbf D}  b $,  
then there is $c \in C$ such that  $a \leq_{\mathbf A} c $,
$  c \leq _{\mathbf B} b $ and, symmetrically,
\item 
if $b \leq _{\mathbf D}  a $,  
then there is $c \in C$ such that  $b \leq_{\mathbf B} c $,
$  c \leq _{\mathbf A} a $.
  \end{enumerate} 

In particular,  every class with the superamalgamation property 
has the strong amalgamation property.

A theory $T$ has the strong amalgamation property (the superamalgamation property)
if the class of models of $T$ has such a property.
\end{definition} 

In principle, our results apply to structures
with many partial orders at a time.
In such a situation, we always suppose that some specific partial order
is selected; the superamalgamation property 
is always meant to refer to such partial order.

We refer to \cite{E,GM,Ha,H,KMPT,MS,MMT} 
for more information about the above notions.

 The following result is proved by
standard arguments in fixed point theory; see, e.~g., Chapter 12 in \cite{R},
in particular, Theorem 12.9 therein.
We present the explicit proof  since
 the construction
shall be used in the course of some proofs in
the next section.
 If $K$ and $J$ are two operations
defined on the same poset $\mathbf P$, we say that  $K$ is 
(pointwise) \emph{larger} than
$J$, or that $J$ is \emph{smaller} than $K$, if $Kx \geq Jx$, 
for every $x \in P$. Here ``larger''  is always intended in the broader sense
of ``larger than or equal to''.

\begin{lemma} \labbel{HH}
Suppose that $\mathbf P$ is a poset and
$H$ is an isotone operation
on $\mathbf P$ such that $HHx \leq Hx$, for every $x \in P$.
If every nonempty infinite
chain in the range of $H$ has a meet in $\mathbf P$
(in particular, if every nonempty infinite
chain in $\mathbf P$ has a meet),
then there is the largest operation $K$ 
  among those isotone and idempotent operations in $\mathbf P$
which are smaller than $H$. 
 \end{lemma}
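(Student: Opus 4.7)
\emph{Proof plan.} The plan is to construct $K$ by transfinitely iterating $H$ starting from $Hx$. For each $x \in P$ I would define a transfinite sequence by
\[
u^0_x = Hx, \qquad u^{\alpha+1}_x = H(u^\alpha_x), \qquad u^\lambda_x = \bigwedge_{\alpha < \lambda} u^\alpha_x \quad (\lambda \text{ a limit ordinal}).
\]
A transfinite induction shows that this sequence is well-defined and weakly decreasing. The hypothesis $HHx \leq Hx$ together with isotonicity of $H$ gives $u^{\alpha+1}_x \leq u^\alpha_x$ at successor stages. At a limit $\lambda$, the entries $u^\alpha_x$ with $\alpha = 0$ or $\alpha$ a successor all lie in $\mathrm{range}(H)$ and form a decreasing chain there, while entries at smaller limit ordinals $\mu < \lambda$, although possibly outside $\mathrm{range}(H)$, are upper bounds of the later successor entries $u^{\mu + n}_x$ and hence may be dropped from the infimum. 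The hypothesis on meets of chains in $\mathrm{range}(H)$ thus yields the existence of $u^\lambda_x$.

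Since $(u^\alpha_x)$ is weakly decreasing in $P$, it stabilizes for each $x$; taking any ordinal $\kappa$ of cardinality exceeding $|P|$ one obtains $u^\kappa_x = u^{\kappa+1}_x$ uniformly in $x$, and I would set $K x := u^\kappa_x$. The inequality $K x \leq u^0_x = Hx$ is immediate, and isotonicity of $K$ follows by a parallel transfinite induction: at successors from isotonicity of $H$, at limits from monotonicity of meets. From $u^{\kappa+1}_x = u^\kappa_x$ one reads off $H(Kx) = Kx$, so $Kx$ is a fixed point of $H$; consequently the iteration beginning at $Kx$ in place of $x$ is constantly equal to $Kx$, which yields $K(Kx) = Kx$ and hence idempotence.

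For the maximality clause, let $J$ be any isotone, idempotent operation on $\mathbf P$ with $J \leq H$. I would prove by transfinite induction that $J x \leq u^\alpha_x$ for every ordinal $\alpha$ and every $x \in P$. The base case is $J x \leq Hx = u^0_x$. For the successor step, from $Jx \leq u^\alpha_x$ one gets $Jx = JJx \leq J(u^\alpha_x) \leq H(u^\alpha_x) = u^{\alpha+1}_x$, using idempotence and isotonicity of $J$ together with $J \leq H$; limit steps follow directly from the induction hypothesis. Specializing to $\alpha = \kappa$ yields $Jx \leq Kx$, so $K$ is indeed the largest such operation.

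The main obstacle is the bookkeeping at limit ordinals: the hypothesis guarantees meets only of chains in $\mathrm{range}(H)$, yet the transfinite iterates $u^\lambda_x$ at limit $\lambda$ need not lie in that range. The decisive observation is that when computing a later infimum these limit-stage entries can be discarded, since each is dominated by the successor-stage entries that immediately follow it; after this reduction the relevant indexing chain sits entirely inside $\mathrm{range}(H)$ and the stated completeness assumption is exactly what is needed.
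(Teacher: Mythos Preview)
Your proposal is correct and follows essentially the same route as the paper: transfinitely iterate $H$, use the decreasing-sequence argument with the observation that limit-stage values can be discarded when forming meets (so the chain hypothesis on $\mathrm{range}(H)$ suffices), read off $HKx=Kx$ at the stabilization point to get idempotence, and prove maximality by the same induction $Jx=JJx\le J(u^\alpha_x)\le H(u^\alpha_x)=u^{\alpha+1}_x$. The only cosmetic differences are that the paper indexes from $1$ rather than $0$ and defines $Kx$ via the \emph{first} stabilization ordinal for each $x$, whereas you take a single large $\kappa$ uniformly; both choices work.
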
 

\begin{proof} 
For $\alpha$ a nonzero ordinal, let
\begin{equation}\labbel{ord}      
\begin{aligned} 
K^1x&=Hx,
\\
  K^{ \alpha +1} x &= H K^ \alpha x \quad \text{ and } 
\\ 
 K^ \beta x &= \prod _{0 < \alpha  < \beta }
 K ^\alpha x \quad \text{ for $\beta$ limit.}
\end{aligned}
 \end{equation}

We need to justify the limit case in  definition \eqref{ord}.
For this it is enough to prove by transfinite 
induction on $\beta >0$ that, for every $x \in P$,  the sequence  
$(K ^ \delta x) _{0 < \delta  \leq \beta} $ is defined and decreasing
(not necessarily in the strict sense).

The base case $\beta=1$ is immediate.  

If $\beta$ is a limit ordinal, then the inductive assumption implies that 
the sequence $(K ^ \delta x) _{0 < \delta  < \beta} $ is
defined and decreasing, so that 
$ \{ \, K ^ \delta x  \mid   0 < \delta  < \beta\,\} $
is a chain, hence $K^ \beta x $ is defined, by the assumption on $\mathbf P$
(notice that, if $\delta$ is limit, then $K ^ \delta x $ 
does not necessarily belong to the range of $H$; however, since the sequence
is decreasing and $\beta$ is a limit ordinal,
the meet of  $ \{ \, K ^ \delta x  \mid   0 < \delta  < \beta, \delta \text{ not limit}\,\} $
exists if and only if the meet of $ \{ \, K ^ \delta x  \mid   0 < \delta  < \beta\,\} $ exists,
and if they exist, they are equal).  
Moreover, $K^ \beta  \leq K^ \delta x $, for $\delta < \beta $, by construction. 

It remains to prove the induction
step, that is,  $K^{ \beta  + 1}  x \leq K^ \beta  x$, for every 
 ordinal $\beta>1$ and $x \in P$,
assuming the inductive hypothesis, that is, 
 $(K ^ \delta x) _{0 < \delta  \leq \beta} $
defined and decreasing. 
When $\beta>1$  is a successor ordinal
the induction step is immediate from
the assumption that $HHx \leq Hx$, for every $x \in P$.
Indeed, in this case  $K^ \beta  x = HK^ \gamma  x $,
where $\gamma$ is the predecessor of $ \beta $,
thus $K^{ \beta + 1}  x  = H K^ \beta x = HHK^ \gamma  x
 \leq HK^ \gamma  x = K^ \beta x$, by the assumption on $H$
with $K^ \gamma  x$ in place of $x$.
We now show that $K^{ \beta  + 1}  x \leq K^ \beta  x$ 
also when
$\beta$ is a limit ordinal.
For every $\gamma$ with $0 <\gamma < \beta $ 
we have $K^{ \beta+1} x = H K^ \beta x
 \leq H K^ \gamma  x
  = K^{ \gamma +1} x \leq K^ \gamma x$,
by the definition of $K^ \beta$, since $H$ is assumed to be isotone
and since $K^ \beta x \leq K^ \gamma  x$.
The  inequality
$K^{ \gamma +1} x \leq K^ \gamma x$ 
 follows from the inductive assumption.
 We have showed that
$K^{ \beta+1} x \leq  K^ \gamma x$,
for every nonzero $\gamma < \beta $
and this means exactly 
$K^{ \beta+1} x \leq  K^ \beta x$.

We have proved that the sequence $K^ \alpha x$, $\alpha$ 
a nonzero ordinal,
is decreasing, so that it eventually stabilizes. 
This justifies the next definition.
For every $x \in P$ we  set 
\begin{align}   
\labbel{kkkkk} 
&K x= K ^ \alpha x
 \end{align}
where $\alpha$ is the smallest nonzero ordinal such that 
$K^{ \alpha +1} x=  K^ \alpha x$. 

From \eqref{kkkkk} it follows that
\begin{equation}\labbel{kuk}      
HKx=HK^{ \alpha } x = K^{ \alpha +1} x= K^{ \alpha } x= K x .
  \end{equation}
  We  now show that $K$ is the desired operation.
Indeed, $K$ is  idempotent, since if
$K^{ \alpha +1} x=  K^ \alpha x$, then 
$K^2Kx=HHK^{\alpha}x=HK^{\alpha+1}x 
=HK^{\alpha}x=K^1Kx$, thus
$KKx=K^1Kx=HKx=Kx$, by \eqref{kuk}.
Notice that, on the other hand, we are not assuming 
$Hx \leq x$.
Moreover, $K$ is smaller than $H$, since, for every $x \in P$, 
$(K ^ \alpha x) _{0 < \alpha } $ is
a decreasing sequence, thus, for $\alpha$ as in \eqref{kkkkk},
$K  x = K ^ \alpha x \leq K ^ 1 x = Hx$.   
We now check that $K$ is isotone. Notice that, for every ordinal $ \beta >0 $, 
each $K^ \beta $  is  isotone, by induction on $\beta$ 
and using the assumption that $H$ is isotone.  
If $x \in P$ and $K^{ \alpha +1} x=  K^ \alpha x$,
then $K^{ \beta} x=  K^ \alpha x$,
for every $\beta \geq \alpha$. 
Thus if $x \leq y \in P$, then, for some suitably large
ordinal $\beta$, $K x=  K^ \beta  x \leq  K^ \beta  y = Ky  $.
 Hence $K$ is isotone. Finally, if $J$
is an isotone and idempotent operation
smaller than  $H$,
we check by induction on $\delta >0$ that
$Jx \leq   K^ \delta x$,
for every $x \in P$ and every nonzero ordinal $\delta$.
It follows  that $J$ is smaller than  $K$.
The base case $\delta=1$ is exactly the assumption that
$J$ is smaller than  $H$, while the limit case
is immediate from the inductive hypothesis.
To prove the successor step,
notice that     
if $Jx \leq K^ \delta x$, then
$Jx =JJx \leq JK^ \delta x \leq
HK^ \delta x = K ^{ \delta +1}x $,
by idempotency and isotony of $J$, 
and again by the assumption that
$J$ is smaller than  $H$.

Thus $K$ is the largest
isotone idempotent operation
among those smaller than $H$.  
\end{proof}    

The assumption $HHx \leq Hx$ is necessary in   Lemma \ref{HH}:
see Remark \ref{HHrem}(a) below.
It is necessary to iterate $H$ in the proof of Lemma \ref{HH}:
see Remark \ref{HHrem}(c) below.

\begin{proposition}  \labbel{minb2}
Suppose that 
$\mathbf P$  is a bounded  complete
lattice.

If $ \{ \,  K_i \mid  i \in I \,\} $
is a  family of isotone and idempotent operations
 on $P$,
then there is the largest isotone idempotent  operation $K$ 
which is smaller than each $K_i$. 
 
In other words, the set of all the isotone and idempotent operations
 on $P$ is a complete bounded lattice,  under the ordering $\leq_p$
given by $J \leq_p K$ if $J$ is pointwise smaller than $K$.  
\end{proposition}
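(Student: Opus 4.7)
The plan is to reduce the existence of the largest common lower bound to a single application of Lemma \ref{HH}. Define the pointwise meet
\begin{equation*}
Hx = \prod_{i \in I} K_i x,
\end{equation*}
which exists for every $x \in P$ because $\mathbf{P}$ is complete. First I would verify that $H$ is isotone: if $x \leq y$, then $K_i x \leq K_i y$ for every $i$ by isotony of $K_i$, hence $Hx \leq K_i y$ for every $i$, and so $Hx \leq Hy$.

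The crucial step is to check the hypothesis $HHx \leq Hx$ of Lemma \ref{HH}. For any fixed $j \in I$, since $Hx \leq K_j x$, isotony of $K_j$ gives $K_j(Hx) \leq K_j(K_j x) = K_j x$, using idempotency of $K_j$. Then $HHx = \prod_i K_i(Hx) \leq K_j(Hx) \leq K_j x$, and since this holds for every $j$, we get $HHx \leq \prod_j K_j x = Hx$. Since $\mathbf{P}$ is a complete lattice, the chain-meet hypothesis of Lemma \ref{HH} is satisfied, so there is the largest isotone idempotent operation $K$ smaller than $H$. To see that $K$ is the desired operation, note first $K \leq H \leq K_i$ for every $i$; conversely, any isotone idempotent $J$ pointwise below each $K_i$ satisfies $Jx \leq \prod_i K_i x = Hx$ for every $x$, hence $J \leq H$ pointwise, and therefore $J \leq K$ by the extremal property granted by Lemma \ref{HH}.

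For the second assertion, observe that the constant operations $x \mapsto \top_P$ and $x \mapsto \bot_P$ are both isotone and idempotent, providing a top and a bottom element in the poset $(\mathcal{K}, \leq_p)$ of isotone idempotent operations on $P$. The first part of the proposition furnishes arbitrary (nonempty) meets in $\mathcal{K}$, and together with the presence of a top this yields arbitrary joins by the standard formula: the join of a family $\{J_\alpha\}_\alpha \subseteq \mathcal{K}$ is the meet (in $\mathcal{K}$) of all members of $\mathcal{K}$ that lie pointwise above every $J_\alpha$, a set which is nonempty since the constant operation $x \mapsto \top_P$ belongs to it. Thus $\mathcal{K}$ is a complete bounded lattice under $\leq_p$.

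The main obstacle is the verification of $HHx \leq Hx$; this is the only place where the idempotency of the individual $K_i$ interacts nontrivially with the pointwise meet, and it is precisely what allows Lemma \ref{HH} to be invoked with the operation $H$ built from the family.
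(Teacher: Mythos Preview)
Your proof is correct and follows essentially the same approach as the paper: define $H$ as the pointwise meet of the $K_i$, verify isotony and $HHx \leq Hx$, and invoke Lemma \ref{HH}. The only difference is that you spell out the complete-lattice claim in the second paragraph, whereas the paper treats it as a mere reformulation of the first assertion.
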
  

\begin{proof}
For $x \in P$,  define $Hx= \prod _{i \in I} K_i x $.

For every $i \in I$, we have
  $HHx \leq K_i Hx
\leq K_i K_i x=K_i x$, 
since  $Hx \leq K_i x$,
for every $x \in P$ (in particular, we can
take $Hx$ in place of $x$) and then  using
isotony and idempotency of $K_i$.
We have showed that 
  $HHx \leq K_i x$,
for every $i \in I$, 
hence 
  $HHx \leq \prod _{i \in I} K_i x = Hx$.

Since each $K_i$ is isotone, then $H$ is isotone, 
hence we can apply Lemma \ref{HH} in order to get 
 the largest operation $K$ 
  among those isotone and idempotent operations
which are smaller than $H$.
Now the conclusion follows from the fact that,
by the definition of $H$, some
operation $K$ is smaller than $H$ 
if and only if $K$ is smaller than all the $K_i$s. 
 \end{proof}    

The next theorem  is a
collection of  folklore results, but some of them can be hardly
found explicitly mentioned in the literature.

\begin{theorem} \labbel{josup}
The classes of partially ordered sets,  meet semilattices,  join semilattices, 
lattices,  Boolean algebras and Heyting algebras
all have the superamalgamation property.
The same applies to the classes of finite such structures.
 \end{theorem}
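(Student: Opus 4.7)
The plan is to handle each class by a construction or classical result suited to it, and then separately verify the finite version.

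For posets, I would take $D=A\cup B$ and define $\leq_D$ as the transitive closure of $\leq_A\cup\leq_B$; concretely, for $a\in A$ and $b\in B$ one sets $a\leq_D b$ iff either $a\leq_A b$, $a\leq_B b$, or there is $c\in C$ with $a\leq_A c$ and $c\leq_B b$. The only real check is antisymmetry: a putative cycle $a\leq_D b\leq_D a$ with $a\in A\setminus B$, $b\in B\setminus A$ produces $c,c'\in C$ with $a\leq_A c\leq_B b\leq_B c'\leq_A a$, hence $c\leq_B c'$ and $c'\leq_A c$; since the inclusions $C\hookrightarrow A$ and $C\hookrightarrow B$ are order-embeddings of the same poset $\mathbf C$, this forces $c=c'$ and then $a=c=b$, a contradiction. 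The superamalgamation clauses (a) and (b) are built into the very definition of $\leq_D$, because every new cross-comparability is witnessed by an intermediate $c\in C$.

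For join semilattices (and dually for meet semilattices) I would pass to the classical free amalgam: its elements are finite formal joins $\bigvee X\vee\bigvee Y$ with $X\subseteq_{\text{fin}}A$, $Y\subseteq_{\text{fin}}B$, identified via the relations holding in $\mathbf A$, $\mathbf B$, and $\mathbf C$. Inequalities between such formal joins reduce, after pushing everything into $C$ where possible, to componentwise inequalities on each side; a comparability across the two sides therefore factors through $C$, which is exactly superamalgamation. For lattices I would invoke J\'onsson's free-product construction over a common sublattice, which yields the same factorisation property. For Boolean algebras the slickest route is Stone duality: the coproduct $\mathbf A\amalg_{\mathbf C}\mathbf B$ is the clopen algebra of the pullback $S(\mathbf A)\times_{S(\mathbf C)}S(\mathbf B)$, whose projections to $S(\mathbf A)$ and $S(\mathbf B)$ are surjective because $C\hookrightarrow A$ and $C\hookrightarrow B$ are embeddings; comparabilities between a clopen from $A$ and one from $B$ then factor through a clopen of $S(\mathbf C)$, giving superamalgamation. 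For Heyting algebras the analogous argument runs via Esakia duality, and this is essentially Maksimova's theorem.

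Finally, the finite case. The poset construction has at most $|A|+|B|-|C|$ elements; the Stone (resp.\ Esakia) pullback of two finite spaces over a finite common quotient is finite, so finite Boolean algebras and finite Heyting algebras close under the same constructions. For finite (meet or join) semilattices, the free amalgam over $C$ is generated by the finite set $A\cup B$ under a single idempotent commutative binary operation with a prescribed multiplication table, hence finite. The one genuinely delicate case is finite lattices, where the J\'onsson free product can be infinite; here I would cite the known result that finite lattices have SAP, obtained by performing the amalgamation inside a suitably chosen finite lattice extension (e.g.\ via a finite distributive envelope of the poset-amalgam followed by a controlled collapse). The main obstacle, and the reason the author flags the theorem as folklore yet poorly documented, is exactly this finite preservation in the lattice/Heyting setting; all other parts of the statement follow from either the direct construction above or a single classical duality.
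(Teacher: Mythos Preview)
Your treatment of posets is the same as the paper's, and your citations for Heyting algebras (Maksimova) are in line with what the paper does via \cite{GM}. Your Stone-duality argument for Boolean algebras is a legitimate alternative to the paper's route, which instead quotes the algebraic fact from Koppelberg that in the free product $\mathbf D$ of $\mathbf A$ and $\mathbf B$ over $\mathbf C$, $ab=0$ implies $a\leq c$ and $b\leq -c$ for some $c\in C$.

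Where you diverge more significantly is in the semilattice and lattice cases, and this divergence is what creates your difficulty with finiteness. The paper does \emph{not} pass to a free amalgam or free product. Instead it stays with the poset amalgam $\mathbf D$ on $A\cup B$ already built, observes (following J\'onsson) that when $\mathbf C$ is a lattice (resp.\ join semilattice) all existing meets and joins (resp.\ joins) of $\mathbf A$ and of $\mathbf B$ are preserved in $\mathbf D$, and then order-embeds $\mathbf D$ into a lattice $\mathbf L$ preserving these existing operations. Superamalgamation is inherited from $\mathbf D$, since the condition is purely order-theoretic. Because $\mathbf D$ is finite whenever $\mathbf A$ and $\mathbf B$ are, and any finite poset embeds into a finite lattice preserving existing meets and joins (Dedekind--MacNeille completion), finiteness is automatic.

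Your worry that ``the J\'onsson free product can be infinite'' is therefore an artifact of your chosen construction, and your proposed fix---``a finite distributive envelope of the poset-amalgam followed by a controlled collapse''---is not an argument but a placeholder: a distributive envelope would destroy the non-distributive lattice structure you need to preserve, and ``controlled collapse'' is undefined. This is a genuine gap in your proposal. The paper's uniform poset-amalgam-then-complete route is what actually makes the finite lattice case go through without extra work; your free-product route does not, and you have not repaired it. Your free-amalgam description for semilattices is also too sketchy to verify (``identified via the relations holding in $\mathbf A$, $\mathbf B$, $\mathbf C$'' and ``pushing everything into $C$ where possible'' are not definitions), though the paper's approach covers this case by the same mechanism as for lattices.
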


 \begin{proof} 
 Though not explicitly stated, 
the proof of Lemma 3.3 in \cite{J} provides
the superamalgamation property for 
posets.
Full details are presented, for example, in \cite[Proposition 2.1]{duerel}. 
For short, if $\mathbf A$, $\mathbf  B$ and $\mathbf  C$ 
are posets to be amalgamated, then 
$\mathbf  D= (A \cup B, \leq _{\mathbf D})$
is a superamalgamating structure, where $\leq _{\mathbf D}$
 is the smallest transitive
relation containing ${\leq _{\mathbf A}} \cup {\leq _{\mathbf B}} $.
The key argument in  \cite[Lemma 3.3]{J} amounts to show that such a relation equals 
 ${\leq _{\mathbf A}} \cup {\leq _{\mathbf B}} \cup 
({\leq _{\mathbf A}} \circ {\leq _{\mathbf B}} ) \cup \allowbreak 
({\leq _{\mathbf B}} \circ {\leq _{\mathbf A}} )$, where, say,
$(a,b) \in {\leq _{\mathbf A}} \circ {\leq _{\mathbf B}}$ means that
there is $c \in A \cap B =C$ such that 
$ a \leq _{\mathbf A} c $ and $ c \leq _{\mathbf B} b$.  
 
An argument in the proof\footnote{ The 
arguments in the proof of
\cite[Theorem 3.5]{J} essentially amount
to the proof of the strong amalgamation property
(condition IV in the terminology from \cite{J}) 
for the class of lattices; all the rest is a consequence of the former 
Theorem 2.11 in \cite{J}.  In particular,
\cite[Theorem 3.5]{J} is stated under the assumption of
 the Generalized Continuum Hypothesis,
but the assumption is not used directly
 in most arguments in the proof,
it is used only when relying to 2.11.   
}
 of \cite[Theorems 3.5]{J}
on page 205 then  goes on by showing that if $\mathbf  C$ is a lattice, then
all existing meets and joins in $\mathbf A$ are preserved in $\mathbf  D$,
as constructed above, and similarly for existing meets and joins in $\mathbf  B$.
In particular,
if $\mathbf A$, $\mathbf  B$ and
 $\mathbf  C$ are lattices, then meets and joins of 
$\mathbf A$ and  $\mathbf  B$ are preserved
in the poset $\mathbf  D$. This does not mean that 
$\mathbf  D$ is a lattice, since, say, the meet of some 
$a \in A$ with some $b \in B$ might not exist in $\mathbf  D$. 
However, any poset can be embedded into some lattice
in such a way that existing meets and joins are preserved,
thus $\mathbf  D$ can be order-embedded into some lattice
$\mathbf L$ in such a way that $\mathbf A$ and $\mathbf  B$ 
are lattice-embedded into $\mathbf L$
(the argument can be reformulated in terms of \emph{partial lattices},
see \cite[p. 455]{G}).
This proves strong amalgamation for lattices and,
as noticed in the second paragraph of 
 \cite{Fl}, the same argument works for semilattices.
Since the definition of superamalgamation deals only with the order structure,
the fact that $\mathbf  D$ superamalgamates 
$\mathbf A$ and $\mathbf  B$ over $\mathbf  C$ as posets  implies that  
$\mathbf  L$ superamalgamates 
$\mathbf A$ and $\mathbf  B$ over $\mathbf  C$ as lattices,
respectively, semilattices.

Thus we know that the classes of posets, 
 lattices and semilattices all have the superamalgamation property.

Boolean algebras and Heyting algebras are well-known to have the
superamalgamation property; 
 see, e.~g.,  \cite{GM}.
In detail, \cite[Theorem 6.1]{GM}, among other, proves 
that a variety $V$ of Heyting algebras has the amalgamation property if and only if
$V$ has the superamalgamation property. Notice that Heyting
algebras are generally called \emph{pseudoboolean algebras} in \cite{GM}.
Theorem 6.1 in \cite{GM} is stated in terms of superintuitionistic logics;
however, there is a bijective correspondence between superintuitionistic logics and 
varieties of Heyting algebras; see \cite[p. 172]{GM}.  
Then \cite[Proposition 6.8]{GM} goes on by showing
that the variety of all Heyting algebras has the amalgamation property.
In \cite{GM} the amalgamation property for Boolean algebras
is obtained again from Theorem 6.1 therein; see \cite[p. 177]{GM}.

Superamalgamation  for Boolean algebras
 can also be obtained more directly as a consequence of \cite[Corollary 11.22]{Ko}.
There it is shown that if $\mathbf  D$ is the free product of two
Boolean algebras $\mathbf A$ and $\mathbf  B$ with amalgamated
subalgebra $\mathbf  C$, then, for every 
 $a \in A$ and  $b \in B$ such that  $ab=0$ in $\mathbf  D$,
there is $c \in C$ such that $a \leq c$ and $b \leq -c$.  
Taking $-b$ in place of $b$, $a(-b)=0$ means $a \leq b$ and then
the mentioned corollary gives $-b \leq -c$, that is, $c \leq b$.
  
The last statement follows from the fact that all the
above constructions preserve finiteness \arxiv{(this needs to be checked:
for example, let $T$ be the theory in the pure language
of equality asserting that if there are more than two objects,
then there are infinitely many objects. Then $T$ has the strong amalgamation property,
but the class of the finite models of $T$ has the
amalgamation property but not
the strong amalgamation property).}
\end{proof}

\section{An extension lemma} \labbel{extsec} 

The next lemma is a broad generalization of earlier
results, e.~g.,   \cite[Lemma 2.3]{MT}, \cite[Theorems 1 - 4]{Se},
\cite[Proposition 3]{ecca}, 
 \cite[Theorem 3.19]{E},   
 \cite[Lemma 8.1]{Sc}.
The present treatment has several advantages over
 earlier versions:
first, no assumption is made on the subset $D$,
second, extensiveness is not necessarily assumed,
though its presence  simplifies statements
and  proofs.
Finally, we do not need 
to work with complete Boolean algebras.
In the simpler case  it is enough to have just an arbitrary poset.
In the general case a complete bounded lattice is enough and no 
form of distributivity is necessary.
\arxiv{On the other hand, distributivity seems to be  necessary 
 when  dealing with additive operations, as we plan to show 
in a further work. } 

The general form of the next lemma, in particular, the
fact that no special assumption is imposed on $D$, will prove particularly
useful in Section \ref{lfsec} below, where it will lead to some
decidability results.

 In the next lemma we shall consider a poset $\mathbf P$,
a subset 
$D $ of $  P$ and a function $G: D \to P$.
We shall deal with the following properties of $G$.   
\begin{align}
\labbel{ide}
&\text{for all $a \in D$, if } Ga \in D, \text{ then } GGa=Ga,
\\
\labbel{inv}
&\text{for all $a \in D$, if } Ga \in D, \text{ then } GGa=a, 
\\
\labbel{invb}
&  \text{for all $a, b \in D$, if } Ga =Gb, \text{ then } a=b,    
\\
\labbel{iso}
& \text{for all $a, b \in D$, if } a \leq b \text{ then } Ga \leq Gb,
\\
\labbel{isoide}     
& \text{for all $a, b \in D$, if } a \leq Gb \text{ then } Ga \leq Gb, 
\\
\labbel{isoidebis}  
& \text{for all $a, b \in D$, if } Ga \leq b \text{ then } Ga \leq Gb,
\\
\labbel{ant}
& \text{for all $a, b \in D$, if } a \leq b \text{ then } Gb \leq Ga.
\end{align}

\begin{lemma} \labbel{exte}
 Suppose that $\mathbf P$  is a 
partially ordered set,
$D \subseteq P$ and $G: D \to P$ is a function.  Then,
for each line in the following table,
$G$ can be extended to an operation $K$ on $ P$
satisfying the properties listed in the second column
if and only if $G$ satisfies the properties listed in the third column

\emph{\begin{equation*}   
\begin{array}{|c|c|c|}
\hline&&
\\[-10pt]
  \text{} 
& \textit{$G$ can be extended to $K$ such that} 
  & \textit{if and only if}  
\\
\hline&&
\\[-10pt]
  \text{A1e} 
& \text{$K$ is extensive} 
  & \text{$G$ is extensive}  
\\
\hline&&
\\[-10pt]
\text{A1c} 
&  \text{$K$ is contractive}   
& \text{$G$ is contractive}  
\\
\hline&&
\\[-10pt]
  \text{A2} 
& \text{$K$ is idempotent} 
& \text{$G$ satisfies \eqref{ide}}  
\\
\hline&&
\\[-10pt]
  \text{A2e} 
& \begin{array}{c}
\text{$K$ is idempotent}
\\
\text{and extensive}
\end{array}  
& \begin{array}{c}
\text{$G$ satisfies \eqref{ide}}
\\
\text{and is extensive}
\end{array}   
\\
\hline&&
\\[-10pt]
  \text{A2c} 
& \begin{array}{c}
\text{$K$ is idempotent}
\\
\text{and contractive}
\end{array}  
& \begin{array}{c}
\text{$G$ satisfies \eqref{ide}}
\\
\text{and is contractive}
\end{array}   
\\
\hline&&
\\[-10pt]
  \text{A3} 
& \text{$K$ is an involution} 
& \text{\ \ $G$ satisfies \eqref{inv} and \eqref{invb}\ \ }  
\\
\hline
\end{array}   
\end{equation*}
}
and, under the further assumption that $\mathbf P$  is a bounded  complete
lattice,
\emph{\begin{equation*}   
\begin{array}{|c|c|c|}
\hline&&
\\[-10pt]
  \text{} 
 & \begin{array}{c}
\textit{$G$ can be extended to $K$}
\\
\textit{such that}
\end{array}   
  & \textit{if and only if}  
\\
\hline&&
\\[-10pt]
  \text{B1} 
& \text{$K$ is isotone} 
& \text{$G$ satisfies \eqref{iso}}  
\\
\hline&&
\\[-10pt]
  \text{B1e} 
& \text{$K$ is isotone and extensive } 
& \text{$G$ satisfies \eqref{iso} and is extensive}  
\\
\hline&&
\\[-10pt]
  \text{B1c} 
& \text{$K$ is isotone and contractive} 
& \text{$G$ satisfies \eqref{iso} and is contractive}  
\\
\hline&&
\\[-10pt]
  \text{B2} 
& \text{$K$ is isotone and idempotent} 
& \text{$G$ satisfies \eqref{iso}, \eqref{isoide} and \eqref{isoidebis}}  
\\
\hline&&
\\[-10pt]
  \text{B3} 
& \text{$K$ is a closure operation} 
& \text{$G$ satisfies \eqref{isoide} and  is extensive}  
\\
\hline&&
\\[-10pt]
  \text{B4} 
& \text{$K$ is an interior operation} 
& \text{$G$ satisfies \eqref{isoidebis} and  is contractive}  
\\
\hline&&
\\[-10pt]
  \text{B5} 
& \text{$K$ is antitone} 
& \text{$G$ satisfies \eqref{ant}}  
\\
\hline
\end{array}   
\end{equation*}
}

Suppose further that $n \in \mathbb N \setminus \{ 0 \} $,
$i,j \in \mathbb N  $, $i+j \leq n$, 
$X \subseteq P^n$ and
 $V: X \to P$  is a function. 
Then $V$ can be extended to an $n$-ary operation $F$ on $ P$
such that
\begin{enumerate}[({C}1)] 
\item 
$F$ is isotone 
on the first $i$ components and
antitone on the last $j$ components  
 \quad if and only if,    \quad
 for all $n$-uples  $ \bar a = \{ a_1, \dots, a_n \}, 
 \bar b = \{ b_1, \dots, b_n \} \allowbreak \in X$,
\begin{equation}\labbel{ison}
\begin{aligned}
&a_h \leq b_h, \text{ for every $h \leq i$, }
a_h = b_h, \text{ for every $h $ with $i < h \leq n-j$,}
\\
& \text{  and } a_h \geq b_h, \text{ for every $h > n- j$ } 
\qquad \text{ imply }  \qquad  V \bar a \leq V \bar b.
\end{aligned}
\end{equation}
\item
If $i\geq 1$, in (C1) we can additionally obtain
  $x_1 \leq F\bar x$ for all $ \bar{x} $, 
 provided \eqref{ison} holds and 
$a_1 \leq V\bar a$, for all 
 $ \bar a \in X$.

Similarly for any subset of  the first $i$ 
 variables.

\item
More generally, 
given an $i$-ary  lattice term $t$, 
in (C1) we can additionally obtain   $t(x_1, \dots, x_i) \allowbreak  \leq F\bar x$,
 provided \eqref{ison} holds and
$t(a_1, \dots, \allowbreak a_i) \allowbreak \leq V\bar a$,
 for every $n$-uple  $ \bar a \in X$.
  \end{enumerate}

In cases (B1) - (C3) the assumption that $\mathbf P$  is a bounded  complete
lattice can be weakened to ``every subset of the range of $G$ (resp., of 
$V$) has a meet in $\mathbf P$'', except for cases
 (B1c) and  (B4), in which cases  the assumption
can be weakened to ``every subset of the range of $G$ has a join in $\mathbf P$'',
 and for case (C3) in which, by definiteness, we must
further assume that $\mathbf P$ is 
 a lattice.  
 \end{lemma}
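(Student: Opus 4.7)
The plan is to prove each claimed biconditional by an explicit construction of the extension. The ``only if'' directions are immediate, since any extension $K$ (or $F$) with a listed property restricts to a $G$ (or $V$) satisfying the corresponding property on $D$ (or $X$), so the real work lies in the ``if'' directions. These split naturally into the three blocks A, B, and C.

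For block A (conditions not involving the order) the construction is set-theoretic. I would take $K = G$ on $D$ and $Ky = y$ on $P \setminus D$ for cases A1e, A1c, A2, A2e, A2c; then idempotency on $D$ reduces to a split on whether $Gx \in D$, the first subcase being exactly \eqref{ide} and the second being automatic from the rule $Ky = y$. For A3 (involution) set $Kx = Gx$ on $D$, extend by the $G$-inverse on $G(D) \setminus D$, well-defined thanks to \eqref{invb}, and by the identity elsewhere; condition \eqref{inv} then gives $KKx = x$ on $D$, and the remaining verifications are routine case-splits on whether $Kx$ lies in $D$, $G(D) \setminus D$, or the complement.

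For block B the central template is the Moore-style meet formula
\[
Kx = \bigwedge\{Gd : d \in D \text{ and } \varphi(x,d)\},
\]
interpreted with the empty meet as the top of $\mathbf{P}$. Taking $\varphi(x,d) \equiv (x \leq d)$ handles B1 and its extensive/contractive variants B1e, B1c; $\varphi(x,d) \equiv (x \leq Gd)$ handles the closure case B3, where idempotency is built into the self-referential form of $\varphi$ (if $d$ witnesses $x \leq Gd$ it also witnesses $Kx \leq Gd$); $\varphi(x,d) \equiv (d \leq x)$ handles the antitone case B5; and B4 is dual to B3 with joins replacing meets. In each instance the hypothesis on $G$ listed in the table translates directly into $K|_D = G$ and into the requested property on all of $P$. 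The delicate case is B2 (isotone plus idempotent, with neither extensivity nor contractivity). Here I would form the hybrid
\[
Hx = \bigwedge\{Gd : d \in D,\ x \leq d \text{ or } x \leq Gd\},
\]
check using \eqref{iso} and \eqref{isoide} that $H$ is isotone with $H|_D = G$, observe that $HHx \leq Hx$ because each $d$ satisfying the defining clause also satisfies $Hx \leq Gd$, and then apply Lemma \ref{HH} to obtain the largest isotone idempotent $K \leq H$. The remaining check is that the transfinite iteration stabilizes at $Gd$ on $D$: splitting on whether $Gd \in D$, the first subcase uses \eqref{isoide} and \eqref{isoidebis} together to produce $GGd = Gd$, and the second uses \eqref{isoidebis} alone to give $H(Gd) = Gd$.

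Block C adapts the same meet template to arbitrary arity. Set
\[
F\bar x = \bigwedge\{V\bar a : \bar a \in X,\ \bar x \preceq \bar a\},
\]
where $\preceq$ is componentwise $\leq$ on the first $i$ coordinates, equality on the middle $n-i-j$ coordinates, and $\geq$ on the last $j$. Condition \eqref{ison} is precisely what is needed to make $F$ extend $V$ and to yield the mixed isotonicity asserted in (C1), and the additional bound in (C3) follows because every $V\bar a$ appearing in the meet satisfies $V\bar a \geq t(a_1, \dots, a_i) \geq t(x_1, \dots, x_i)$ by isotony of the lattice term $t$ in its $i$ variables; (C2) is the special case $t(x_1) = x_1$. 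I expect B2 to be the main obstacle: it is the only case whose construction needs a transfinite fixed-point iteration, and the only one in which both \eqref{isoide} and \eqref{isoidebis} enter. The weakening of completeness stated at the end of the lemma is then essentially free, since every meet (and, in the dual cases B1c and B4, every join) above is taken over a subset of the range of $G$ or $V$.
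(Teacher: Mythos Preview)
Your proposal is correct and follows essentially the same approach as the paper: the same identity-outside-$D$ extension for block A, the same Moore-style meet formulas for block B (including the hybrid $H$ and the appeal to Lemma~\ref{HH} for case B2), and the same mixed-order meet for block C. One small wrinkle: your phrasing that the meet formula with $\varphi(x,d)\equiv(x\le d)$ ``handles B1 and its \dots\ contractive variant B1c'' is not literally right---that meet need not be contractive---but you correctly note at the end that B1c is a dual case requiring joins, which is exactly what the paper does; also, your case split on whether $Gd\in D$ in the B2 verification is harmless but unnecessary, since your Case~2 argument (using \eqref{isoidebis} to get $H(Gd)=Gd$) works uniformly.
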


\begin{proof}
In each case, necessity is 
 immediate, without any assumptions on
the completeness of $\mathbf P$, 
assuming that $\mathbf P$ is a lattice in case (C3). 
\arxiv{Notice also that the order structure on $P$ is not
relevant in case (A3).} 
 We now prove sufficiency.

 (A1e) - (A2c)   
 In each case define $K:P \to P$ by 
\begin{align} 
 \labbel{kide}   
 \qquad \qquad Kx&=Gx &&\text{ if $x \in D $,  \qquad \qquad }
\\
\labbel{kidee}   
Kx&=x &&\text{ if $x \notin D $}. 
\end{align} 
Clause \eqref{kide}  assures that 
$K$ extends $G$. Moreover, $K$ 
is extensive or contractive if and only if so is 
$G$. Assume  \eqref{ide}. If $a \in D$ and $Ka \in D $,
then $KKa=Ka$  because of \eqref{kide} and \eqref{ide}.
If $a \in D$ and $Ka \notin D $,
then $KKa=Ka$  because of \eqref{kidee},
which implies $KKa=Ka$ also 
when $a \notin D$. 
Hence $K$ extends $G$ and is idempotent.

 (A3)
 Define $K:P \to P$ by 
\begin{align} 
 \labbel{A}   
 Kx&=Gx &&\text{ if $x \in D $,}
\\
\labbel{B}   
Kx&=a &&\text{ if $Ga=x $, for some $a \in D$,} 
\\
\labbel{C}   
Kx&=x &&\text{ otherwise, that is, neither $x \in D$, nor $x \in Im\, G$}. 
\end{align} 

Conditions \eqref{inv} and  \eqref{invb}
imply that the outcomes of $K$ agree in all the overlapping cases. 
Clause \eqref{A} implies that $K$ extends $G$ and $K$ 
is an involution by construction.   

(B1) Define, for each $x \in P$,
\begin{align}
\labbel{kiso} \tag{Case B1}
 &&  Kx= \prod \{ \,  Gb  \mid b\in D, x \leq b  \,\}. 
\end{align} 

By definition, $K$ is isotone.
It remains to show that $K$ extends $G$.
So let  $x \in D$. We can take $b=x$  
in the set defining $Kx$, hence $Kx \leq Gx$.
On the other hand, for every $b \in D$ 
  such that  $x \leq b$, we have 
$Gx \leq Gb$, by  \eqref{iso}, hence $Gx 
\leq  \prod \{ \,  Gb  \mid b\in D, x \leq b  \,\} = Kx$.
Thus $Kx = Gx$.

If $G$ is extensive, then $K$  is 
 extensive.
If $G$  is contractive, then $K$, as defined,
need not be contractive. 
However, we can get a contractive $K$ 
by performing the dual construction, namely, defining
$Kx= \sum \{ \,  Gb  \mid b\in D, b \leq x  \,\}$.

(B2) 
For $x \in P$ let 
\begin{equation} 
\labbel{kisoide1}
   Hx = \prod \{ \,  Gb  \mid b\in D \text{ and either }  x \leq b \text{ or }  x \leq Gb  \,\}
 \end{equation}   

 By definition, $H$ is isotone.
We now check that 
\begin{equation}\labbel{hh}    
\text{$HHx \leq Hx$,
for every $x \in P$.}    \end{equation}
Indeed, if $Gb$ belongs to the defining set for $Hx$ in \eqref{kisoide1},
then   $Hx \leq Gb$, by the very definition of $Hx$; thus
if in \eqref{kisoide1} we consider
$Hx$ in place of $x$, then 
$HHx \leq Gb$, since $Hx \leq Gb$.
We have showed that $HHx$ is $\leq$ than every factor in the set 
whose meet gives $Hx$, hence it follows that $HHx \leq Hx$.   

Let $K$ be the operation defined
in \eqref{kkkkk} in the proof of Lemma \ref{HH},
with respect to $H$ defined in \eqref{kisoide1} above:
recall that $K$ is obtained by iterating $H$.
By Lemma \ref{HH}, $K$ is isotone and idempotent, hence it remains  
to check that  if $x \in D$, then $Kx =Gx$.
We first prove that if $x \in D$, then $Hx =Gx$.    
Indeed,  suppose that $x \in D$. If $x \leq b \in D$, then $Gx \leq Gb$, by \eqref{iso}.
If   $x \leq Gb$ and $b \in D$, then $Gx \leq Gb$, by \eqref{isoide}.
Thus $Gx \leq Hx = \prod \{ \,  Gb  \mid b\in D \text{ and either }
 x \leq b \text{ or }  x \leq Gb  \,\}$. Also  $Gx \geq H x $,
since we can take $b=x$ in 
the defining set in \eqref{kisoide1}. Hence $ H x = Gx$. 

 We now prove that if $x \in D$, then $HHx =Hx$.
By \eqref{hh}, $HHx \leq  H x$.  
On the other hand, 
using the already proved  fact that, for $x \in D$,  
$Gx= H x $, we get that if  $b \in D$ and    
either $H x \leq b$ or $H x \leq Gb$,  
 then  $H x  = Gx  \leq Gb$  in each case,
applying \eqref{isoidebis} in the former eventuality. 
Hence $H x \leq 
 \prod \{ \,  Gb  \mid b \in D \text{ and either } H x \leq b \text{ or } H x \leq Gb  \,\}
=HH x $.

We have proved that
if $x \in D$, then 
$ Gx =H x = HH x  $.
 According to the definition in \eqref{ord}
in the proof of Lemma \ref{HH},  this means that 
$Gx =  K^1 x = K^2x $, thus
$ Kx= K^1 x =Gx $, by the definition of $K$ in \eqref{kkkkk}.

(B3) follows  from 
 the proof of 
(B2),
since if $G$ is extensive, then 
$H$, as defined by \eqref{kisoide1}, 
turns out to be extensive, as well,
hence
 $K$ given by 
 \eqref{kkkkk} is extensive.  
Since condition \eqref{isoidebis}
is  satisfied by an extensive $G$ and, moreover,
\eqref{isoide} implies  \eqref{iso} when $G$
is extensive, 
then we get (B3).

Let us mention, however, that a direct 
proof of (B3)  is much simpler, just define
\begin{align}
\labbel{gkclo} \tag{Case B3}
 &&  Kx= \prod \{ \,  Gb  \mid b\in D, x \leq Gb  \,\}
\end{align} 

The operation $K$ is isotone and extensive.
If $b \in D$  and $x \leq Gb$, then $x \leq Kx \leq Gb$.
Thus,
 for every $b \in D$,     
$ x \leq Gb $ if and only if $  Kx \leq Gb $,
that is $KKx = Kx$, namely,
$K$ is idempotent. 

Because of  \eqref{isoide},
if $x \in D$, then $Gx \leq Kx$.
Conversely, since $G$ is assumed to be extensive, 
we can take $b=x$ in the defining set for $Kx$,
getting  
$Kx \leq Gx$.
Thus  $Kx  =   
 Gx$,
for $x \in D$, that is, 
$K$ extends $G$. 
Compare \cite[Lemma 2.3]{MT},
 \cite[Theorem 3.19]{E}   
and \cite[Lemma 8.1]{Sc}.

In comparison with 
\eqref{kisoide1}, notice that 
if $G$ is extensive  and $b \in D$,    
then $a \leq b$  
implies $a \leq Gb$. This means that in the extensive
case $H$ 
 from  \eqref{kisoide1} and $K$ 
from \eqref{gkclo}    
 coincide and, more importantly,
 no transfinite iteration is needed.

(B4) is dual to (B3).
In detail, we can choose 
\begin{align}
\labbel{gkinte} \tag{Case B4}
 &&  Kx= \sum \{ \,  
 G   
b  \mid b\in D, 
 G   
b \leq x  \,\} 
\end{align} 
 in order to get an interior operation extending $G$.

(B5) Define
\begin{align}
\labbel{kant} \tag{Case B5}
 &&  Kx= \prod \{ \,  Gb  \mid b\in D, b \leq x  \,\}. 
\end{align} 

The operation $K$ is antitone.
Arguing as in case (B1) we see that $K$ extends $G$.
Indeed, if $x \in D$, then, taking $b=x$  
in the set defining $Kx$, we get  $Kx \leq Gx$.
If $b \in D$ 
  and   $b \leq x$, we have 
$Gx \leq Gb$, by  \eqref{ant}, hence $Gx \leq  Kx$.

(C1) It is enough to define 
\begin{equation}
\labbel{kison} \tag{Case C1}
\begin{aligned}   
 & F\bar x = \prod \{ \, V \bar  b  \mid
 \bar b\in X, x_h \leq b_h, 
\text{ for  $h \leq i$, }
 x_h = b_h, \text{ for $h $ with} 
\\ & \phantom { F\bar x = \prod \{ \, } 
i < h \leq n-j, \text{ and }
 x_h \geq b_h, \text{ for } h > n- j\,\}. 
 \end{aligned}
\end{equation} 
 
(C2) follows from the above definition in
\eqref{kison}. 

More generally (C3) follows from the fact that lattice
 terms are isotone on each component.
Indeed, fix $\bar x \in P ^{  n } $ and $X^* \subseteq X$. 
If
$x_1 \leq b_1$, \dots, $x_i \leq b_i$ and 
$t(b_1, \dots, b_i) \leq V \bar b $,
for every $ \bar{b} \in X^*$,
then
$t(x_1, \dots, x_i) \leq t(b_1, \dots, b_i) \leq V \bar b $,
for every $ \bar{b} \in X^*$.
Hence 
$t(x_1, \dots, x_i) \leq 
 \prod \{ \, V \bar  b  \mid \bar b\in X^* \,\}$. 
 
The last statement follows from the proof, since we have always
taken meets (or joins) of  subsets of the range of $G$ or $V$. 
\end{proof}    

 Some completeness assumptions are
necessary in Lemma \ref{exte} in cases (B1) - (C3);
see Remark \ref{basta}(b) - (e).   

By a \emph{comparability condition}  
between two, say, unary operations
$H$ and $K$ on the same set $X$, we mean a condition 
of the form $Hx \leq Kx$ for every $x \in X$.
Namely, a comparability condition is
always a $\leq$-condition, we shall not deal
with $<$-conditions here.

 Lemma \ref{exte} holds when applied
simultaneously to two or more partial functions,
and comparability conditions can be preserved.
This is the content of  Lemma \ref{compa} below. In most cases the result
is immediate from the proof of \ref{exte}; however, 
some details need to be
worked out in cases (A3), (B2) - (B4).

\begin{lemma} \labbel{compa}
Assume the hypotheses  in Lemma \ref{exte}.
  \begin{enumerate}   
 \item  
For each item (A1e) - (B5), assume that 
$G_{{\circ}}: D \to P$ and $G_{{\bullet}}: D \to P$
are functions satisfying the corresponding sufficient condition.
 In case (A3) assume further that
$G_{{\bullet}}b \in D$ 
and $G_{{\circ}}b \in D$, for every $b \in D$.  

Under the above assumptions, if $G_{{\circ}}b \leq G_{{\bullet}}b$,
for every $b \in D$, then there are operations
$K_{{\circ}} $ and $  K_{{\bullet}}$
defined on the whole of $P$, 
extending  respectively $G_{{\circ}}$ and $G_{{\bullet}}$,
satisfying the 
corresponding condition in the middle column
in the tables in Lemma \ref{exte} 
 and such that 
$K_{{\circ}}x \leq K_{{\bullet}}x$,
for every $x \in P$.  
\item
For some fixed $n$ and some $X \subseteq P^n$, assume that  
$V_{{\circ}}: X \to P$ and $V_{{\bullet}}: X \to P$ are functions
satisfying the condition in Lemma \ref{exte}(C), and let
$F_{{\circ}}$ and $F_{{\bullet}}$ be 
defined as in the proof of \ref{exte}.

If $V_{{\circ}} \bar b \leq V_{{\bullet}} \bar b$,
for every $\bar b \in X$, then 
$F_{{\circ}}\bar x \leq F_{{\bullet}}\bar x$,
for every $\bar x \in P^n$.
  \end{enumerate}
 \end{lemma}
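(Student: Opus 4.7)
The plan is to split the statement into (i) cases where the construction in the proof of Lemma \ref{exte} is already monotone in the partial function, so that defining $K_{\circ},K_{\bullet}$ (or $F_{\circ},F_{\bullet}$) by the same formulas immediately yields the required comparability, and (ii) the four delicate cases (A3), (B2), (B3), (B4) in which the canonical formula is not monotone in $G$ and a modified construction has to be produced. Part (2) needs only an observation: formula \eqref{kison} expresses $F\bar x$ as a meet over a set of $\bar b\in X$ determined purely by the comparability pattern between $\bar x$ and $\bar b$, with no reference to $V$. Hence $V_{\circ}\bar b\leq V_{\bullet}\bar b$ at once yields $F_{\circ}\bar x\leq F_{\bullet}\bar x$ for every $\bar x\in P^n$.

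For the routine unary cases (A1e), (A1c), (A2), (A2e), (A2c), (B1), (B1e), (B1c), (B5), the formulas \eqref{kide}--\eqref{kidee}, \eqref{kiso}, \eqref{kant} (and the joins used for the contractive variants) use an index set of the form $\{b\in D:x\leq b\}$ or $\{b\in D:b\leq x\}$ which does not depend on $G$, so pointwise monotonicity of $G$ transfers to pointwise monotonicity of $K$. Case (A3) reduces to the routine argument thanks to the additional hypothesis: $G_{\circ}b,G_{\bullet}b\in D$ forces the ranges of $G_{\circ}$ and $G_{\bullet}$ to lie in $D$, so in \eqref{A}--\eqref{C} only the branches $x\in D$ and $x\notin D$ appear. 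Both $K_{\circ}x$ and $K_{\bullet}x$ equal $x$ in the second branch, while on $D$ one has $K_{\circ}x=G_{\circ}x\leq G_{\bullet}x=K_{\bullet}x$.

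For (B3) and (B4) the canonical formulas fail to be monotone because the index set $\{b\in D:x\leq Gb\}$ itself depends on $G$. I would fix this by enlarging the ``closure system'' used for $K_{\circ}$: take $K_{\bullet}$ as in \eqref{gkclo} and define
\[
K_{\circ}x=\prod\{c\in C:x\leq c\},\qquad C=\{G_{\circ}b:b\in D\}\cup\{G_{\bullet}b:b\in D\}.
\]
That $K_{\circ}$ extends $G_{\circ}$ at $x\in D$ amounts to showing $G_{\circ}x\leq c$ for every $c\in C$ with $x\leq c$: for $c=G_{\circ}b$ this is \eqref{isoide} for $G_{\circ}$, while for $c=G_{\bullet}b$ one applies \eqref{isoide} to $G_{\bullet}$ to obtain $G_{\bullet}x\leq G_{\bullet}b$ and then uses the pointwise bound $G_{\circ}x\leq G_{\bullet}x$. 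Isotonicity is immediate and idempotency follows because, for $c\in C$, $x\leq c$ is equivalent to $K_{\circ}x\leq c$. Finally $K_{\circ}\leq K_{\bullet}$ because the defining index set for $K_{\circ}x$ contains the one for $K_{\bullet}x$. Case (B4) is obtained by dualising: take $K_{\bullet}$ from \eqref{gkinte} and enlarge the family of ``open'' elements used for $K_{\circ}$ to $\{G_{\circ}b:b\in D\}\cup\{G_{\bullet}b:b\in D\}$, with the symmetric verifications.

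The main obstacle is case (B2), where $K$ need be neither extensive nor contractive and the construction passes through the iteration of Lemma \ref{HH}. Let $H_{\circ},H_{\bullet}$ be the operations \eqref{kisoide1} attached to $G_{\circ},G_{\bullet}$, and set $\tilde H_{\circ}x:=H_{\circ}x\wedge H_{\bullet}x$. Then $\tilde H_{\circ}$ is isotone; using $\tilde H_{\circ}x\leq H_{\circ}x$, $\tilde H_{\circ}x\leq H_{\bullet}x$, isotonicity of $H_{\circ},H_{\bullet}$, and \eqref{hh}, one checks $\tilde H_{\circ}\tilde H_{\circ}x\leq H_{\circ}x\wedge H_{\bullet}x=\tilde H_{\circ}x$, so Lemma \ref{HH} applies to $\tilde H_{\circ}$. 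For $x\in D$ one has $H_{\circ}x=G_{\circ}x$, and by the meet argument from the proof of Lemma \ref{exte}(B2) applied to $H_{\bullet}$ at the point $G_{\circ}x$ one obtains $G_{\circ}x\leq H_{\bullet}(G_{\circ}x)\leq G_{\bullet}x$ (taking $b=x$ exhibits $G_{\bullet}x$ as an upper bound in the defining meet, and every term of that meet is $\geq G_{\circ}x$). Hence $\tilde H_{\circ}(G_{\circ}x)=G_{\circ}x$, the iteration \eqref{ord} of $\tilde H_{\circ}$ stabilises at $G_{\circ}x$ after one step, and the isotone idempotent $K_{\circ}$ produced by Lemma \ref{HH} extends $G_{\circ}$. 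Defining $K_{\bullet}$ as the iteration of $H_{\bullet}$ (which extends $G_{\bullet}$ by the proof of (B2)), since $K_{\circ}$ is isotone, idempotent and $K_{\circ}\leq\tilde H_{\circ}\leq H_{\bullet}$, the maximality clause of Lemma \ref{HH} applied to $H_{\bullet}$ forces $K_{\circ}\leq K_{\bullet}$. The delicate point is the joint verification that $\tilde H_{\circ}$ satisfies the hypothesis of Lemma \ref{HH} and still extends $G_{\circ}$; this is precisely where the assumption $G_{\circ}\leq G_{\bullet}$ is genuinely used.
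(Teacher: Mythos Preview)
Your treatment of part (2), the routine cases (A1e)--(A2c), (B1)--(B1c), (B5), and case (A3) is correct and matches the paper.

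For (B3) your construction is in fact the same as the paper's, only presented differently: the paper sets $K_{\circ}^{*}x=K_{\circ}^{\text{can}}x\wedge K_{\bullet}x$, and unfolding the two meets gives exactly your $\prod\{c\in C:x\leq c\}$ with $C=\{G_{\circ}b\}\cup\{G_{\bullet}b\}$.

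For (B2) your argument is correct and is a slightly more direct variant of the paper's. The paper first iterates $H_{\circ}$ and $H_{\bullet}$ separately to obtain idempotent $K_{\circ},K_{\bullet}$, then invokes Proposition~\ref{minb2} (i.e.\ iterates $K_{\circ}\wedge K_{\bullet}$) and checks that the result still extends $G_{\circ}$. You instead iterate $\tilde H_{\circ}=H_{\circ}\wedge H_{\bullet}$ in one pass and use the maximality clause of Lemma~\ref{HH} to compare with $K_{\bullet}$. Both routes rely on the same key computation $H_{\bullet}(G_{\circ}x)\geq G_{\circ}x$; your version avoids the detour through Proposition~\ref{minb2}.

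There is, however, a genuine slip in case (B4). You propose to keep $K_{\bullet}$ canonical and enlarge the ``open'' system for $K_{\circ}$ to $C=\{G_{\circ}b\}\cup\{G_{\bullet}b\}$, setting $K_{\circ}x=\sum\{c\in C:c\leq x\}$. This goes the wrong way on both counts. First, enlarging the index set of a supremum makes $K_{\circ}$ \emph{larger}, so you obtain $K_{\circ}x\geq K_{\bullet}x$, not $\leq$. Second, this $K_{\circ}$ need not extend $G_{\circ}$: if $G_{\bullet}b\leq x$ for some $b,x\in D$, condition \eqref{isoidebis} for $G_{\bullet}$ only gives $G_{\bullet}b\leq G_{\bullet}x$, and from $G_{\circ}x\leq G_{\bullet}x$ you cannot conclude $G_{\bullet}b\leq G_{\circ}x$. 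The correct dualisation of your (B3) argument is to keep $K_{\circ}$ canonical and enlarge the open system for $K_{\bullet}$ (equivalently, set $K_{\bullet}^{*}x=K_{\circ}x\vee K_{\bullet}x$, which is what the paper's ``dual'' amounts to): then $K_{\bullet}^{*}$ is an interior operation, extends $G_{\bullet}$ because $G_{\circ}b\vee G_{\bullet}b=G_{\bullet}b$ on $D$, and $K_{\circ}\leq K_{\bullet}^{*}$ is immediate.
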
 

\begin{proof}
 In cases (A1e) - (B1c), (B5)
  let $K_{{\circ}}$ and $K_{{\bullet}}$ be correspondingly
defined as in the proof of Lemma \ref{exte}.
The conclusion is elementary in cases (A1e) - (A2c)  
and follows from the definition of $K$ in cases
(B1) - (B1c) and (B5).
Say, in case (B1), we have
 $K_{{\circ}}x= \prod \{ \,  G_{{\circ}}b  \mid b\in D, x \leq b  \,\}$ 
and 
$K_{{\bullet}}x= \prod \{ \,  G_{{\bullet}}b  \mid b\in D, x \leq b  \,\}$.
Since  $G_{{\circ}}b \leq G_{{\bullet}}b$,
for every $b \in D$,  then every element in the set 
whose meet gives $K_{{\bullet}}$ 
is bounded below by some 
element in the set 
whose meet gives $K_{{\circ}}$
and this implies  $K_{{\circ}}x \leq K_{{\bullet}}x$.
The argument is similar in case (B5), as well 
 as in part (2) of the present lemma. For case (A3) just notice that,
under the additional assumption, condition \eqref{B}
becomes redundant, while comparability is clearly
preserved by clauses \eqref{A} and \eqref{C}. 

We now deal with case (B3) and
shall postpone the more involved case (B2).
Let  $K_{{\circ}}$ and $K_{{\bullet}}$ be 
given by \eqref{gkclo} in the proof of  Lemma \ref{exte}.
It is not necessarily the case 
that $K_{{\circ}} x \leq K_{{\bullet}}x$,
for all $x \in P$: see Remark  \ref{compa2}(b) below.
Hence we need consider another operation
in place of   $K_{{\circ}}$.
Let $K^*_{{\circ}}x= K_{{\circ}}x \wedge K_{{\bullet}}x$,
for $x \in P$.
It is elementary to see that $K^*_{{\circ}} $
is a closure operation, since both $ K_{{\circ}} $ and $ K_{{\bullet}}$
are.
The argument is well-known; the only nontrivial part is idempotence.
Let $x \in P$ and $y=K^*_{{\circ}}x$. Since $x \leq y \leq K_{{\circ}}x $,
then   $K_{{\circ}}x \leq K_{{\circ}}y \leq K_{{\circ}}K_{{\circ}}x= K_{{\circ}}x $,
hence $K_{{\circ}}y = K_{{\circ}}x$.
Similarly,  $K_{{\bullet}}y=K_{{\bullet}}x$,
thus  $K^*_{{\circ}}K^*_{{\circ}}x=K^*_{{\circ}}y= K_{{\circ}}y \wedge K_{{\bullet}}y=
K_{{\circ}}x \wedge  K_{{\bullet}}x = K^*_{{\circ}}x$,
proving idempotence of $K^*_{{\circ}}$.

Since $K_{{\circ}}$ and $K_{{\bullet}}$ extend, respectively,
$G_{{\circ}}$ and $G_{{\bullet}}$, we have
$K^*_{{\circ}}b= G_{{\circ}}b \wedge G_{{\bullet}}b
=G_{{\circ}}b$, for every $b \in D$, since, by assumption,
 $G_{{\circ}}b  \leq G_{{\bullet}}b$. Hence
 $K^*_{{\circ}}$ extends $G_{{\circ}}$.
Thus the operations $K^*_{{\circ}}$ 
(in place of $K_{{\circ}}$)  
and $K_{{\bullet}}$ witness the conclusion of the 
present lemma in case (B3).
Case (B4) is dual.

In order to prove  case (B2) we shall use Proposition \ref{minb2}. 
Let  $K_{{\circ}}$ and $K_{{\bullet}}$ be 
given by the proof of Lemma \ref{exte}. As in case (B3),
we need to use another operation  $K_{{\circ}}^*$
in place of $K_{{\circ}}$.  Let
$I= \{ {\circ}, {\bullet} \} $ in  Proposition \ref{minb2}
and let $H$ and $K$ be given by the proofs of  \ref{HH} and \ref{minb2}.
Recall from the proof of Proposition \ref{minb2} that, in this special case, 
$Hx= K_{{\circ}}x \wedge K_{{\bullet}} x$; then the proof of
Lemma \ref{HH} is applied and $K$ is obtained by transfinitely iterating $H$
until it assumes a constant value.  
By Proposition \ref{minb2}, $K$ is isotone, idempotent and
smaller than $K_{{\bullet}}$, so that we can conclude the proof
if we show that  $K$ extends 
$G_{{\circ}}$, considering $K_{{\circ}}^*=K$ in place of $K_{{\circ}}$.

From the proof of Proposition \ref{minb2} we have 
$Hx= K_{{\circ}}x \wedge K_{{\bullet}} x$, for $x \in P$. 
Let us fix  $b \in D$. By assumption,   
$G_{{\circ}}b  \leq G_{{\bullet}}b$, thus
$K_{{\circ}}b \leq K_{{\bullet}}b$ and $Hb= 
K_{{\circ}}b \wedge K_{{\bullet}} b =K_{{\circ}}b=G_{{\circ}}b$,
since $K_{{\circ}}$ and $K_{{\bullet}}$ extend, respectively,
$G_{{\circ}}$ and $G_{{\bullet}}$.

We want to show that $HHb=Hb$, so that  \eqref{ord}
and \eqref{kkkkk} give $Kb=Hb=G_{{\circ}}b$, the conclusion
we need.    We first recall the definition of  $K_{{\bullet}}$
from the proof of Lemma \ref{exte}.  In detail, 
$K_{{\bullet}}$ is obtained by iterating the  operation
introduced in \eqref{kisoide1}, recalled below with suitable relabelings: 
\begin{equation} 
\labbel{kisoidebu}
   H_{{\bullet}} x =
 \prod \{ \,  G_{{\bullet}}d  \mid 
d\in D \text{ and either }  x \leq d \text{ or }  x \leq G_{{\bullet}}d  \,\}.
 \end{equation}   

We first check that $H_{{\bullet}}G_{{\circ}}b \geq G_{{\circ}}b$.
So let $x$ be $G_{{\circ}}b$   in \eqref{kisoidebu}.
If $G_{{\bullet}}d$ belongs to the set in   \eqref{kisoidebu}
because $x= G_{{\circ}}b \leq d$, then
$ G_{{\circ}}b \leq  G_{{\circ}}d $, since 
$ G_{{\circ}}$ is assumed to satisfy \eqref{isoidebis}.
Thus $ G_{{\circ}}b \leq  G_{{\circ}}d \leq G_{{\bullet}}d $,
by the comparability assumption between $G_{{\circ}}$ and $G_{{\bullet}}$.
Otherwise,   $G_{{\bullet}}d$ belongs to the set in   \eqref{kisoidebu}
because $x= G_{{\circ}}b  \leq G_{{\bullet}}d $;
in conclusion, $H_{{\bullet}}  G_{{\circ}}b  $ is obtained
as the meet of a set of elements which are all $ \geq G_{{\circ}}b $,
hence   $H_{{\bullet}}G_{{\circ}}b \geq G_{{\circ}}b$.
Since, recalling \eqref{ord},  $K_{{\bullet}} $ is obtained by iterating transfinitely
$H_{{\bullet}} $, we get $K^ \alpha _{{\bullet}}G_{{\circ}}b \geq G_{{\circ}}b$,
for every nonzero ordinal $\alpha$,
since $H_{{\bullet}} $ is isotone, hence
$K _{{\bullet}}G_{{\circ}}b \geq G_{{\circ}}b$.

We now compute 
$ HHb = HK_{{\circ}}b  =
 K_{{\circ}} K_{{\circ}}b \wedge K_{{\bullet}}K_{{\circ}}b
=  K_{{\circ}}b \wedge K_{{\bullet}}G_{{\circ}}b =
 G_{{\circ}}b \wedge K_{{\bullet}}G_{{\circ}}b
=  G_{{\circ}}b = Hb$,
since we already know that
$Hb=G_{{\circ}}b=K_{{\circ}}b  $, then
using the definition of $H$, idempotency of
  $K_{{\circ}}$ and the just proved inequality
$K _{{\bullet}}G_{{\circ}}b \geq G_{{\circ}}b$.

Since in the above argument 
$b$ was an arbitrary element of $D$,
we get  $HHb =Hb=G_{{\circ}}b$, for every $b \in D$.
As remarked above, this means that $K$ extends $G_{{\circ}}$,
thus $K_{{\circ}}^*=K$ (in place of $K_{{\circ}}$) and   $K _{{\bullet}}$
 satisfy the desired conclusion.    
 \end{proof}

 The additional assumption is needed
in Lemma \ref{compa} 
in  case  (A3). See Remark \ref{compa2}(a) below. 

\begin{remark} \labbel{molt}
We have stated Lemma \ref{compa} for just two operations
only for simplicity: the analogue of Lemma \ref{compa} holds
for any family of operations and an arbitrary set
of comparability conditions. In detail, assume the hypotheses of 
Lemma \ref{exte} and fix some item (W) chosen from
(A1e) - (C3) in the statements there. Let
$(Z, \preccurlyeq)$ be a partially ordered set of indices,
and $(G_z) _{z \in Z}  $ be a $Z$-indexed sequence  of functions
from $D$ to $P$ such that, say in the unary case,
$G_z(b) \leq G_{z'}(b)$, for every $b \in D$ and $z \preccurlyeq z' \in Z$.
In case (A3) assume further that $G_z(b) \in D$, for every $b \in D$
and $z \in Z$.      

If each $G_z$ satisfies the  condition on the right 
in (W), then there is a way of
extending simultaneously each $G_z$ to a  total function  
$K_z  $ on $P$  in such a way that each $K_z$
satisfies the corresponding condition   and moreover
$K_z(x) \leq K_{z'}(x)$, for every $x \in P$ and $z \preccurlyeq z' \in Z$.

 This is proved as in Lemma \ref{compa} in cases (A1e) - (B1c), (B5) - (C3),
since in such cases we can always consider the functions $K_z$
or $F_z$ provided by the proof of Lemma \ref{exte}.
To prove  case (B3),
let the functions $K_z$ $(z \in Z)$ 
be given by  Lemma \ref{exte}.
 For every $x \in P$ and  $z \in Z$, define 
\begin{equation}\labbel{tutut}
   K^*_z x= \prod \{ \, K_{z'} x \mid   z' \in Z, z \preccurlyeq z' \, \}  .
   \end{equation}  
The same arguments as in the proof of Lemma \ref{compa}
show that each $K^*_z$ is a closure operator which extends $G_z$.
 The desired comparability conditions follow directly from 
\eqref{tutut}, since if  $z \preccurlyeq w \in Z$, then 
the set defining  $K^*_{w}x$ is contained in the set
defining $K^*_z x$.
Case (B4) is dual. 

In case (B2) the equation \eqref{tutut}
should be used to define some (not necessarily idempotent)
functions $H^*_z$ ($z \in Z$) which need  to be transfinitely 
iterated as in the proofs of
Lemmas \ref{HH}, \ref{compa}  and Proposition \ref{minb2},
 in order to obtain isotone and idempotent operations
 $K^*_z$ ($z \in Z$).
The above argument for case (B3) shows that
$H^*_{z}x \leq H^*_{w}x$, for every
$z \preccurlyeq w \in Z$ and $x \in P$.
Since each $H^*_{z}$ is isotone,
then, applying the definitions in \eqref{ord} and \eqref{kkkkk} 
from the proof of Lemma  \ref{HH} 
to 
both   $H^*_{z} $ and $  H^*_{w}$,
 we get 
  $K^*_{z}x \leq K^*_{w}x$.

The comparability assumptions on the $G_z$s
imply that  $H^*_{z} b = G_z b $, for every $z \in Z$ 
and $b \in D$.  
The arguments in the proof of Lemma \ref{compa} in this same
case (B2) show that $K_{w}G_z b \geq G_z b$,  for every   
$z \preccurlyeq w \in Z$ and $b \in D$ (here
the index $w$ is in place of $\bullet$ and the index $z$ 
is in place of $\circ$).
From $H^*_{z} b = G_z b $ and by the definition of $H^*_{z}$ we get 
$H^*_{z}H^*_{z} b = H^*_{z} G_z b =
 \prod \{ \, K_{w} G_z b   \mid   w \in Z, z \preccurlyeq w \, \}
\geq G_z b = H^*_{z} b $. From the proof of 
Proposition \ref{minb2} we have  
$H^*_{z} H^*_{z} b  \leq H^*_{z} b $,
hence  $H^*_{z} H^*_{z} b  = H^*_{z} b = G_z b $
and \eqref{kkkkk} gives  
 $ K^*_{z} b  = H^*_{z} b = G_z b $, thus
 $ K^*_{z} $ extends $  G_z  $, concluding the proof of 
case (B2).
 \end{remark}

\begin{definition} \labbel{list}
In view of Lemma \ref{exte},
we shall consider the following properties
of a unary operation 
in an ordered  structure: 
(A1e) extensive, 
(A1c) contractive,
(A2) idempotent,
(A2e) idempotent and extensive, 
(A2c) idempotent and  contractive,
(A3) involutive,
(B1)  isotone, 
 (B1e)  isotone and extensive, 
 (B1c)  isotone and contractive,
(B2) isotone and idempotent,
(B3) a closure operation (that is, 
 isotone, extensive and idempotent),
(B4) an interior operation (that is, 
 isotone, contractive and idempotent), 
(B5) an antitone operation, and, 
for $n$-ary operations,
(C1)$_{i,j,n}$  
isotone 
on the first $i$ components and
antitone on the last $j$ components,
and, possibly, 
(C2)$_{i,j,n,h}$  
 satisfying also   $x_1 \leq F\bar x, \dots, 
x_h \leq F\bar x$, for some $h \leq i$,  more generally,
 for lattice-ordered structures,    
(C3)$_{i,j,n,t}$ 
 satisfying    $t(x_1, \dots, x_i) \leq F\bar x$,
for some given lattice term $t$.
 \end{definition}

\begin{corollary} \labbel{plap}
Suppose that (W) is any one of the properties
(A1e) - 
 (C2)    
listed in  Definition \ref{list}.
 \begin{enumerate}   
\item 
If  $\mathbf Q$ is a  poset
with an operation satisfying (W)  
and
$\iota$ is an order-embedding of $\mathbf Q$
into some bounded complete lattice  $\mathbf P$,
then $\mathbf P$ can be expanded by adding an
operation satisfying (W)
 in such a way that $\iota$  is an embedding
with respect to the operation.

\item
Every  poset  $\mathbf Q$
with an operation satisfying (W)
can be order-embedded into  some complete bounded  lattice $\mathbf P$
with an operation satisfying (W) and
in such a way that the embedding preserves the operation
and all existing, possibly infinitary, meets and joins in 
$\mathbf Q$.

\item
Every  poset  
with an operation satisfying (W)
can be order-embedded into  some complete atomic Boolean algebra 
with an operation satisfying (W) and
in such a way that the embedding preserves the operation  and  
all existing, possibly infinitary, meets (alternatively,  joins).  

\item
Every  distributive lattice with an operation 
satisfying (W)
 can be lat\-tice-embedded into
 some complete atomic Boolean lattice with   an operation
satisfying (W) and
in such a way that the embedding also preserves the operation.

\item
In all the above cases
we can add simultaneously any number of operations,
possibly of distinct arities, and
possibly satisfying distinct properties chosen from 
(A1e) -  (C2).  
 The construction
 can be performed in such a way that it  
 preserves comparability conditions
among operations satisfying the same property.
\end{enumerate} 
 \end{corollary}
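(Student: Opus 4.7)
The plan is to obtain all five parts as consequences of the Extension Lemma \ref{exte}, combined with classical poset embedding theorems and the simultaneous version supplied by Lemma \ref{compa} and Remark \ref{molt}. The central device is to transport the given operation to the target lattice along the embedding $\iota$ as a \emph{partial} function, verify that the sufficient condition from the appropriate row of Lemma \ref{exte} holds, and then invoke that lemma.

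For part (1), let $F$ denote the operation on $\mathbf{Q}$, set $D=\iota(Q)\subseteq P$, and define $G:D\to P$ by $G(\iota(a))=\iota(F(a))$ (in the $n$-ary case, analogously for $V$). Since $\iota$ is an order-embedding, hence preserves and reflects $\leq$, the partial function $G$ satisfies exactly the sufficient condition listed in the third column of the row of Lemma \ref{exte} corresponding to (W) --- for instance \eqref{ide}, \eqref{isoide}, \eqref{isoidebis}, \eqref{ant}, or the $n$-ary condition \eqref{ison} --- precisely because $F$ does. Since $\mathbf{P}$ is bounded complete, the lemma produces a total operation $K$ on $\mathbf{P}$ satisfying (W) and extending $G$, which is tantamount to saying that $\iota$ is an embedding for the operation.

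Parts (2)--(4) reduce to (1) by choosing an appropriate ambient lattice. For (2), every poset order-embeds, preserving all existing meets and joins, into a bounded complete lattice via the Dedekind--MacNeille completion (after adjoining, if necessary, a top and a bottom element). For (3), send each $q\in Q$ to its principal downset $\{x\in Q: x\leq q\}$ inside the complete atomic Boolean algebra $\mathcal{P}(Q)$; this preserves all existing meets, and using upsets instead yields the dual preservation of joins. For (4), a distributive lattice lattice-embeds into the complete atomic Boolean algebra of clopen upsets of its Priestley dual. In each situation, (1) then installs an operation satisfying (W) on the ambient complete lattice.

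For part (5), given a family $(F_z)_{z\in Z}$ of operations on $\mathbf{Q}$ (possibly of different arities and of different types chosen from Definition \ref{list}), transport each to a partial function $G_z$ on $D=\iota(Q)$ as above; the comparability relations $F_z\leq F_{z'}$ between operations of the same type transfer to $G_z\leq G_{z'}$, since $\iota$ reflects the order. Operations grouped by a common property (W) are then extended simultaneously using Lemma \ref{compa}, or its generalisation in Remark \ref{molt} when $|Z|>2$, while operations of different types or arities can be handled in parallel, since the constructions in Lemma \ref{exte} for different (W) are independent of one another.

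The only genuine obstacle is bookkeeping: one must check, row by row in the tables of Lemma \ref{exte}, that transport along an order-embedding sends the relevant hypothesis to itself, which is routine. The substantive combinatorial work for (5), especially in the delicate cases (A3), (B2), (B3), (B4) where the naive pointwise meet of two extensions need not behave well, has already been carried out inside Lemma \ref{compa} and Remark \ref{molt}, and here one simply appeals to it.
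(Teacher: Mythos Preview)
Your proposal is correct and follows essentially the same route as the paper: transport the operation along $\iota$ to a partial function on $D=\iota(Q)$, verify the relevant hypothesis of Lemma \ref{exte} using that $\iota$ is an order-embedding, extend, and then for (2)--(4) reduce to (1) via classical embedding theorems; (5) is handled by Lemma \ref{compa} and Remark \ref{molt}. One small slip: in (4) the ``clopen upsets of the Priestley dual'' form a distributive lattice isomorphic to the original $L$, not a complete atomic Boolean algebra --- the intended target is the full powerset of the prime filter space (or, equivalently, the algebra of \emph{all} subsets of the Priestley dual), which is what the paper's citation \cite[Theorem 153]{G} provides.
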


 \begin{proof} 
(1) In cases (A1e) - (B5) define $G$ on $\iota(Q)$
by $G(\iota(a))=\iota(K_{\mathbf Q}a)$,
where $K_{\mathbf Q}$ is the given operation  on $\mathbf Q$. 
This is a good definition, since $\iota$ is injective.
The respective conditions 
among \eqref{ide} - \eqref{ant}  
in Lemma \ref{exte} are  satisfied by $G$ on 
$D= \iota(Q)$, since, by assumption,
they are satisfied by $K_{\mathbf Q}$ 
in $\mathbf Q$  and $\iota$ is an order-embedding.
By 
Lemma \ref{exte}, 
$G$ can be extended on the whole of
$P$ to an operation satisfying the desired property.
With respect to this operation $\iota$ turns out to be an embedding
by the very definition of $G$. 
\arxiv{
Of course, in cases (A1e) - (A3) we do not need 
the completeness assumption on $\mathbf P$.} 

In cases (C1) -  (C2)    
 define
$V$ on $(\iota(Q))^n$
by $V(\iota(a_1), \dots,\iota(a_n))=
\iota(F_{\mathbf Q}(a_1, \allowbreak  \dots, \allowbreak a_n))$
and argue similarly.

(2)  follows from (1), since
every poset can be embedded into  some bounded complete  lattice
in such a way that existing meets and joins  are preserved
\cite[ Ch. 1,  
 Theorems 10.6, 10.7]{Ha}.

(3) - (4) follow similarly by known
results about embeddings of ordered structures, e.~g.,
\cite[ Ch. 1,  
 Theorems 9.9, 9.10]{Ha} and
\cite[Theorem 153]{G}.

(5) The constructions  of the lattice 
$\mathbf P$  in (2) and of  the Boolean  lattices in 
(3) - (4) do not depend on the operation, hence we can add 
as many operations as we want at the same time.
The last statement follows from Lemma \ref{compa}
 and Remark \ref{molt}.  
\end{proof}  

 Item (2) (under the further assumption that $\mathbf Q$ 
is a lattice) and item (4) in Corollary \ref{plap} apply
also in case (C3). 
Clause (2) for
a closure operation 
appears in \cite[Corollary 3.20]{E}. 
Clause (3) for meet-semilattices with
a closure operation appears in  \cite[Proposition 3.2 and 
Lemma 3.4]{Ja}.
  
\arxiv{Notice that the case of join-semilattices is not the 
dual case, since the dual of a  meet-semilattice with a closure operation
is a  join-semilattice with an interior operation. 
In detail, if the semilattice operation is written multiplicatively,
extensiveness is equivalent to $x \cdot Kx = Kx$ in join 
semilattices; to  $x \cdot Kx = x$, instead, 
in meet semilattices.

A \emph{reduct} of some structure is a structure
in which some operations or relations are forgotten.
A \emph{subreduct} is a substructure of some reduct.  
It follows from Corollary \ref{plap}(3) that, say,
if $\mathcal  S$ is the class of all 
 Boolean algebras with a closure operator,
then the class of all subreducts of members of $\mathcal  S$  
to the language of posets with an operator is the class
of posets with a closure operator. 
As another example, if we consider lattice operations
as ternary relations $\vee(x,y,z)$,  $\wedge(x,y,z)$
and  $\mathcal H$ is the class of all 
lattices (in the above relational sense) with a closure operator,
then the class of all substructures of members of $\mathcal H$
is the class of partial lattices with a closure operator: use (2). 
Similar consequences can be obtained in all the other cases.} 

\section{Superamalgamation implies 
amalgamation for expanded structures} \labbel{sapimp}

 As mentioned in the 
section on preliminaries, we shall deal with
models in the classical model-theoretical sense.
Classes of models are always meant to be of the same type
and closed under isomorphism.
The proof of the next Theorem \ref{superap}
can be applied without essential modifications to
a somewhat broader setting, for example, dealing with
topological or infinitary structures. 
An even more general version in a categorical setting is
possible; however, details become quite cumbersome and we 
know no significant application; hence we shall provide
details (elsewhere)  if and when some applications are found. 

 We require embeddings to be at 
least order-embeddings. If we expand  $\mathcal S$ to some class
$\mathcal S_1$ by adding one or more operations, an
embedding for $\mathcal S_1$ is meant to be an embedding
for $\mathcal S$ which in addition respects the new operations,
say, $\iota(Kx)= K \iota(x)$, for unary operations. 
 When dealing with case (C3) we shall always assume
that structures are lattice-ordered and that
embeddings are at least lattice-embeddings. 

The assumption that 
$\mathcal S$ has the superamalgamation property 
in the next theorem
cannot be weakened, in general, to the strong amalgamation property.
See Example \ref{lopex} below.

\begin{theorem} \labbel{superap}
Suppose that $\mathcal S$ is a class of ordered structures
such that 
  \begin{enumerate} 
   \item  
$\mathcal S$ has the superamalgamation property, and
\item
every structure $\mathbf F  \in \mathcal S$ 
can be extended to some 
structure $\mathbf E \in \mathcal S$ such that 
every subset of $F$ has both a meet and a join in 
$\mathbf E$ 
(in particular, this applies if
every structure $\mathbf F$ in $\mathcal S$ 
can be embedded into some 
structure $\mathbf E \in \mathcal S$ such that
 the order on $\mathbf E$
is a 
complete bounded lattice).
 \end{enumerate}

If $\mathcal S_1$ is the class of expansions of structures 
of $\mathcal S$ obtained by adding
a new operation satisfying some fixed  property chosen from  (A1e) - 
 (C2)  
in Definition \ref{list},
then  $\mathcal S_1$ has the superamalgamation property,
in particular, the strong amalgamation property.
In particular, this applies to adding an isotone, or an extensive, idempotent,
 closure, interior, antitone\dots\   operation.

More generally, the same applies to expansions 
obtained by adding   families of such operations,
 possibly with comparability conditions
among operations satisfying the same property.
 \end{theorem}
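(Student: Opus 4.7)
The plan is to reduce the $\mathcal S_1$-superamalgamation statement to the Extension Lemma \ref{exte} applied inside a completion of an $\mathcal S$-superamalgam. Given $\mathbf A_1, \mathbf B_1, \mathbf C_1 \in \mathcal S_1$ with $\mathbf C_1 \subseteq \mathbf A_1$, $\mathbf C_1 \subseteq \mathbf B_1$ and $C = A \cap B$, let $\mathbf A, \mathbf B, \mathbf C \in \mathcal S$ be the reducts obtained by forgetting the new operation(s). By hypothesis (1) pick $\mathbf D \in \mathcal S$ superamalgamating $\mathbf A, \mathbf B$ over $\mathbf C$, and by hypothesis (2) extend $\mathbf D$ to $\mathbf E \in \mathcal S$ in which every subset of $D$ admits a meet and a join; this $\mathbf E$ will carry the desired $\mathcal S_1$-superamalgam.

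The core step is to define a partial operation $G$ on $A \cup B$ (or $A^n \cup B^n$ in the $n$-ary case) that agrees with $K_{\mathbf A_1}$ on $A$ and with $K_{\mathbf B_1}$ on $B$; the two definitions coincide on $C$ because both extend $K_{\mathbf C_1}$. The range of $G$ lies in $A \cup B \subseteq D$, so the completeness requirement of Lemma \ref{exte} is met inside $\mathbf E$. One then has to verify that $G$ satisfies the sufficient hypothesis matching the chosen property (W). Comparisons with all elements in $A$, or all in $B$, are inherited directly from $\mathbf A_1, \mathbf B_1$. The mixed case is where superamalgamation is essential: if $a \in A \setminus B$, $b \in B \setminus A$ and $a \leq_{\mathbf E} b$, then $a \leq_{\mathbf D} b$, and superamalgamation of $\mathbf D$ produces $c \in C$ with $a \leq_{\mathbf A} c$ and $c \leq_{\mathbf B} b$; for isotony one computes
\[
G(a) = K_{\mathbf A_1}(a) \leq K_{\mathbf A_1}(c) = K_{\mathbf C_1}(c) = K_{\mathbf B_1}(c) \leq K_{\mathbf B_1}(b) = G(b),
\]
and analogous bridging handles \eqref{isoide}, \eqref{isoidebis}, \eqref{ant}, and the componentwise inequality \eqref{ison}. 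Extensiveness, contractiveness, idempotency \eqref{ide}, involution \eqref{inv}, \eqref{invb}, and the lattice-term bound in (C2) transfer immediately, because $G(a)$ lies on the same side as $a$, reducing each check to $\mathbf A_1$ alone or $\mathbf B_1$ alone.

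Now invoke Lemma \ref{exte} to extend $G$ to a total operation $K$ on $\mathbf E$ satisfying (W), and let $\mathbf E_1 \in \mathcal S_1$ be the resulting expansion. Because $K$ extends $G$, the inclusions $\mathbf A_1, \mathbf B_1 \hookrightarrow \mathbf E_1$ preserve the new operation; they are order-embeddings via $\mathbf A, \mathbf B \subseteq \mathbf D \subseteq \mathbf E$. The superamalgamation witness conditions (a), (b) for $\mathbf E_1$ reduce at once to the witnesses already provided by the construction of $\mathbf D$, since any comparison in $\mathbf E_1$ between $a \in A \setminus B$ and $b \in B \setminus A$ is just a comparison in $\mathbf D$. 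For families of operations and prescribed comparability conditions $K_\circ \leq K_\bullet$, the identical construction runs in parallel on the corresponding partial functions $G_\circ \leq G_\bullet$, invoking Lemma \ref{compa} and Remark \ref{molt} in place of Lemma \ref{exte}. The main difficulty is the mixed-case verification in the previous paragraph: it is precisely there that superamalgamation, rather than mere strong amalgamation, is indispensable, since only an \emph{ordered} interpolant $c \in C$ lets one move operation values across $A$ and $B$; the promised Example \ref{lopex} shows that without this the conclusion can fail.
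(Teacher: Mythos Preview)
Your proposal is correct and follows essentially the same approach as the paper: reduce to the $\mathcal S$-reducts, superamalgamate to $\mathbf D$, complete to $\mathbf E$, define the partial operation $G$ on $A\cup B$ (or $A^n\cup B^n$), use the superamalgamation interpolant $c\in C$ to bridge mixed inequalities, and invoke Lemma~\ref{exte} (and Lemma~\ref{compa}/Remark~\ref{molt} for families with comparabilities).  One small point: for condition~\eqref{invb} the phrase ``reducing each check to $\mathbf A_1$ alone or $\mathbf B_1$ alone'' is slightly too quick, since $a$ and $b$ may lie on opposite sides; the paper handles this by noting that $G(a)=G(b)$ forces this common value into $C$, whence $a=K_{\mathbf A_1}K_{\mathbf A_1}a=K_{\mathbf C_1}(G a)=K_{\mathbf C_1}(G b)=K_{\mathbf B_1}K_{\mathbf B_1}b=b$.
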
 

\begin{proof}
Suppose that $\mathbf A$, $\mathbf B$ and $\mathbf C$ 
are structures in $\mathcal S_1$ to be amalgamated
and with, say, a closure operation $K$
not in the type of $\mathcal S$.
By the superamalgamation property of $\mathcal S$,
the reducts $\mathbf A^-$, $\mathbf B^-$, $\mathbf C^-$
to the type of $\mathcal S$ can be superamalgamated to some
structure $\mathbf F^-$. 
By the assumption (2)  we can extend $\mathbf F^-$ to some  
$\mathbf E^-$ in $\mathcal S$ such that 
every subset of $F$, in particular, every subset of 
$A \cup B$ has both a meet and a join in $\mathbf E^-$.

We want to expand $\mathbf E^-$
to some structure $\mathbf E$ in  $\mathcal S_1$
in such a way that $\mathbf E$ strongly amalgamates 
$\mathbf A$ and  $\mathbf B$ over $\mathbf C$. 
If this is possible, $K_{\mathbf E}$ should agree
with the following function $G$ 
\begin{equation}\labbel{kd}
Gd =\begin{cases} 
K_{\mathbf A} d &    \text{if  $ d \in A  $},
\\ 
K_{\mathbf B} d &    \text{if  $ d \in B  $}
 \end{cases} 
\end{equation}  
defined on $D=A \cup B$. 
Notice that $K_{\mathbf A} $ and 
$K_{\mathbf B} $ agree on $C=A \cap B$, 
 by the assumptions in the hypothesis of the 
strong amalgamation property.

We shall use the superamalgamation property to check that 
$G$, as given by \eqref{kd}, satisfies the assumptions
of Lemma \ref{exte}, in this specific
instance,  extensiveness and the condition \eqref{isoide}. 
$G$ is obviously extensive, since both
$K_{\mathbf A} $ and 
$K_{\mathbf B} $ are.
 
To prove \eqref{isoide}, first assume that  
 $a \in A \setminus B$
and $b \in B \setminus A$.  
If $a \leq Gb$ in $\mathbf E^-$, then 
$a \leq Gb$ in $\mathbf F^-$, as well, 
since $Gb = K_{\mathbf B} b \in B \subseteq F^-$.
Since $\mathbf F^-$ superamalgamates 
$\mathbf A^-$ and $\mathbf B^-$ over $\mathbf C^-$,
there is  $c \in C$ such that 
$a \leq _{ \mathbf A}  c  $ and
 $c \leq _{ \mathbf B} Gb$.  
Then  $Ga= K_{\mathbf A}a
\leq_{\mathbf A} K_{\mathbf A}c = Gc$, since $a , c \in A$
and  $K_{\mathbf A}$ is a closure operation on $\mathbf A$.
Similarly, from $ c \leq_{\mathbf B} Gb$,
 that is, $ c \leq_{\mathbf B} K_{\mathbf B}b$,
we get  $Gc = K_{\mathbf B} c \leq_{\mathbf B} 
K_{\mathbf B}K_{\mathbf B}b = K_{\mathbf B}b =Gb$,    
 since
$K_{\mathbf B}$ is a closure operation on $\mathbf B$.
Since we assume that the embeddings 
from $\mathbf A^-$ and $\mathbf B^-$ to $\mathbf E^-$
are at least order-embeddings, 
from $Ga \leq_{\mathbf A} Gc$ and
$Gc \leq_{\mathbf B} Gb$
we get 
$Ga \leq Gc \leq Gb$ in $\mathbf E^-$,
hence $Ga \leq Gb$ by transitivity of $\leq$.

 The case  
$b\in A \setminus B$, $a \in B \setminus A$
is symmetrical, while the cases when 
$a,b\in A$ or $a,b \in B$ follow from the assumption
that $K_{\mathbf A}$, $K_{\mathbf B}$ are closure operations
 on $\mathbf A$, $\mathbf B$.
 We have proved \eqref{isoide} for $G$.    

We have showed that $D$ and $G$,
as chosen, satisfy the assumptions
in Lemma \ref{exte}(B3), hence  $G$ can be extended to
a closure operation $K$  on the whole of $E$.
By \eqref{kd},  the expansion of $\mathbf E^-$
obtained by adding $K$  superamalgamates    
$\mathbf A$ and $\mathbf B$ over $\mathbf C$, 
since we already know that $\mathbf E^-$  superamalgamates   
$\mathbf A^-$ and  $\mathbf B^-$ over $\mathbf C^-$.

The other cases in (A1e) -  (C2)    
 are entirely similar.
In cases (C1) - (C2) take $X= A^n \cup B^n$.
The point is that, in  the conditions
\eqref{iso} - \eqref{ison},  
each inequality involves only one element on the right and one
element on the left, so that we can apply the superamalgamation property.
 In cases (A1e) - (A3) it is enough to assume
the strong amalgamation property (this is
necessary for $G$ to be well-defined). 
The proof is similar to the above arguments.
For case (A3) notice that if both 
$K_{\mathbf A}$ and $K_{\mathbf B}$ 
are involutions,
$a \in A $,
$b \in B $ and $Ga=Gb$,
then $Ga=Gb \in A \cap B = C$, say,
$Ga=Gb=c \in C$.
Then $a=K_{\mathbf A}K_{\mathbf A}a=
K_{\mathbf A}Ga=K_{\mathbf A}c=K_{\mathbf B}c=
K_{\mathbf B}K_{\mathbf B}b=b$.  
Thus \eqref{invb} holds in $D$, for $G$
defined by \eqref{kd}.   

In passing, we remark that in a further work
we shall see  that, when dealing with
 additive operations,  
we  get conditions involving more than one element
on the right-hand side of the inequalities.
We can adapt the above arguments anyway, by assuming a
notion  stronger than superamalgamation. We shall present details elsewhere.

Finally, the construction of  $\mathbf E^-$
does not depend on $K$, or on the other additional operations,
hence we can repeat the above argument for as many operations
as we want. Comparability conditions
between operations satisfying the same property
are  preserved by
Definition \eqref{kd}, 
hence can
be  
maintained in view of Lemma \ref{compa} 
  and Remark \ref{molt}.   
Notice that the additional condition in the second sentence of
 Lemma \ref{compa}(1) is verified here, since
$D=A \cup B$, hence $d \in D$ implies 
$Gd \in D$, where $G$ is defined as in \eqref{kd}.  
\end{proof}  

As in the last paragraph  of Lemma \ref{exte},
we only need assume that 
every subset of $F$ has  a meet in 
$\mathbf E$ in clause (2) in Theorem \ref{superap}, unless we deal with
cases (B1c) or (B4).
 Theorem \ref{superap} holds also in case (C3),
under the assumption that 
$\mathcal S$ is a class of lattice-ordered structures
and that embeddings preserve the lattice operations.

In the following examples we show that the assumption (1)
is necessary in Theorem \ref{superap}.

\begin{examples} \labbel{lopex}
 Recall that a class of structures closed under isomorphism
has the \emph{amalgamation property} (AP) if, under the assumptions
in Definition \ref{sap}, we only obtain the weaker conclusion
that there are a model $\mathbf  D$ and  embeddings 
$\iota: \mathbf A \to \mathbf  D$ and
$\kappa: \mathbf B \to \mathbf  D$ 
which agree on $C$.
 The difference is that we do not necessarily assume 
that $\iota$ and $\kappa$ are inclusions, in other words,
 possibly, some 
elements of $ A$  need to be identified
with elements of $B$.

(a) It is then elementary to see that we need  the strong
amalgamation property in the hypothesis (1) of Theorem \ref{superap};
AP alone does not suffice. Indeed, if SAP fails, 
additional operations might behave differently on elements to be
identified,
hence it is not possible to embed both $\mathbf A$ and $\mathbf  B$ 
 into the same structure. For example, let $\mathbf  C$ be a distributive
lattice with some
operation $K$ 
and with some element $c \in C$ which has no complement in $\mathbf  C$.
Suppose that $\mathbf A$ and $\mathbf  B$ are extensions 
of $\mathbf  C$ in which $c$ has a complement, call 
such complements  $a$
and $b$,
respectively. In any amalgamating structure in the class
of distributive lattices, $a$ and  $b $ should be identified,
since complements are unique in distributive lattices.
But if, say, $Ka=a$ in $\mathbf A$ and
$Kb \neq b$ in $\mathbf  B$, 
then it is not possible to embed $\mathbf A$ and $\mathbf  B$
into the same structure.
The argument applies to most kinds of operations;
exceptional cases occur only when there are very
tight assumptions on $K$. 
For instance, if in the above example we assume that $K$ 
is an isotone involution, then necessarily
both $Ka=a$ and $Kb=b$, hence amalgamation
is possible.  

The above argument also explains why we need
to deal with embeddings.
The argument shows that we need to deal with, at least, 
injective homomorphisms. However,
the class of posets with injective order preserving
functions does not have AP, in the categorical sense
from \cite{KMPT}. Indeed, if $\mathbf  C$ 
has two incomparable elements $c$ and $d$,
we set $c \leq d$ in $\mathbf A$ (this is compatible
with the assumption that we deal with injective order preserving functions)
and $ d \leq c$    in $\mathbf  B$, then 
(the images of) $c$ and $d$ should be equal in   
any amalgamating structure $\mathbf  D$, thus injectivity
is lost.

(b) A more involved example
shows that we do need the superamalgamation property  
in the hypothesis (1) of Theorem \ref{superap};
SAP is not enough.  
The classes of linearly ordered sets 
(linearly ordered sets with one isotone operation) have 
the strong amalgamation property 
 \cite[Theorem 3.1(a)]{lop},   
 but not the 
superamalgamation property.
Every linearly ordered set can be embedded into a 
complete bounded linearly ordered set, hence 
every linearly ordered set with one isotone operation
can be embedded into a complete bounded linearly ordered set
with one isotone operation, by Corollary \ref{plap}(1).

On the other hand,  the class of linearly ordered sets with
two isotone operations does not have the amalgamation property 
 \cite[Theorem 3.1(c)]{lop}. 
This example shows that assumption (1) is necessary in
Theorem \ref{superap}
 and cannot be weakened to the strong amalgamation property:    
take $\mathcal S$ 
to be the class of  linearly ordered sets with one isotone operation
 and let $\mathcal S_1$
be obtained by adding another isotone operation.
  
The proof of Theorem 3.1(c) in \cite{lop}
actually gives counterexamples for all cases
(B1) - (B4). Indeed, the example in (c)(i) there provides
 a triple of  linearly ordered sets with two closure operations and 
 which has no amalgamating model in the class of
linearly ordered sets with two  isotone operations.
By \cite[Lemma 5.1]{lop}, the classes of linearly ordered sets
with an isotone and extensive (isotone and idempotent,
closure)  operation have the strong amalgamation property.
Notice that in \cite{lop} we used different terminology:
isotone operations are called \emph{order preserving} there,
and we used \emph{increasing} in place of extensive.   
Arguing as above, the counterexample in \cite[Theorem 3.1(c)]{lop}
takes care simultaneously
of (B1), (B1e), (B2), (B3), while (B1c), (B4) are dual.

(c) To deal with case (B5), in \cite[Remark 4.4]{lop}
we noticed that the class of linearly ordered sets with
an antitone operation with a fixed point (called a \emph{center} in \cite{lop})
has the strong amalgamation property. On the other hand,
  the class of linearly ordered sets with
two antitone operations, even with a common fixed point,
 does not have  AP (\cite[Theorem 4.3(b)]{lop}). Then argue as above.

(d)  Notice that cases (B1), resp., (B1e), are the special unary
cases of (C1) with $i=1$, resp., of (C2). 
Moreover, for lattice ordered structures,
 case (C2) is the special case
$t(x_1, \dots, x_i) = x_1$  of (C3).
Since the counterexample in (b) above
is a linearly ordered set, in particular, a lattice, we get that 
the superamalgamation property  is necessary also
in cases (C1) - (C3).

(e) On the other hand, as we mentioned in the proof of Theorem \ref{superap},
the strong amalgamation property is sufficient for cases (A1e) - (A3).  
 \end{examples}

If  $\mathcal H$  is a class of finitely generated structures,
 a Fra\"\i ss\'e  limit of  $\mathcal H$ is 
a countable universal  homogeneous structure of age  $\mathcal H$.
For example, the ordered set of the rationals is the Fra\"\i ss\'e
limit of the class of finite linearly ordered sets,
the random graph is the Fra\"\i ss\'e limit
of the class of finite graphs.  
See \cite[Section 7.1]{H}  for  details.

The joint embedding property is a necessary condition for the
existence of a Fra\"\i ss\'e limit. Recall that a class $\mathcal H$ has the \emph{joint embedding  property}
(JEP) if, for every  
$\mathbf A, \mathbf B  \in \mathcal H$,
 there are a structure
$\mathbf E \in \mathcal H$ and  embeddings
$ \iota: \mathbf A \to \mathbf E$
and 
 $ \kappa: \mathbf B \to \mathbf E$.
 For classes of structures
in which it makes sense to consider empty structures,
for example, posets or semilattices with no constant in the
language, the amalgamation property implies JEP.
On the other hand, for example,
nontrivial    
Boolean algebras with additional operations generally do not have the
joint embedding property since  it may happen that $K0=0$
in some algebra of the class, while $K0 \neq 0$ in some other algebra.  

 However, given a class  $\mathcal S$ 
with the amalgamation property, setting $\mathbf A \sim \mathbf  B$ 
if $\mathbf A, \mathbf  B \in \mathcal S$ and $\mathbf A$,
$\mathbf  B$ can be embedded into a same member of $\mathcal S$,
we get an equivalence relation such that each 
 equivalence class has AP and JEP. If furthermore
$\mathcal S$ is closed under substructures in a language with at least one 
constant, then $\mathbf A \sim \mathbf  B$ 
if and only if   the $ \emptyset $-generated substructures 
of $\mathbf A$ and $\mathbf  B$ are isomorphic.  
For example, the class of 
 nontrivial 
Boolean algebras with a closure
operation satisfying $K0=0$ has the joint embedding property. 
Notice that if $K$ is a closure operation, then necessarily $K1=1$. 

In the next corollary embeddings are meant to preserve all the operations
of the structures.

\begin{corollary} \labbel{corsuperap}
Let (W) be any one of the properties
(A1e) -  (C2)  
 from  Definition \ref{list} and 
let $\mathcal  S$ be any one of the following classes:
the class of partially ordered sets, of meet semilattices, of join semilattices, 
of lattices, of Boolean algebras, of Heyting algebras. 
 In the last three cases we may also allow  (W) to be (C3).  
Then
the following hold.
  \begin{enumerate}
    \item  
If $\mathcal  S ^{\text{(W)}} $
is the class of  structures obtained from 
members of $\mathcal  S$ by adding 
a new operation satisfying
(W), then $\mathcal  S ^{\text{(W)}} $
has the superamalgamation property, in particular,
the strong amalgamation property. 
\item
More generally, the superamalgamation property is maintained
if we add families of  operations
satisfying possibly distinct properties  from (A1e) - (C3).
Further, we may possibly
add comparability conditions among operations satisfying
the same property.
\item
For each of the above classes, the 
class of their finite members has a Fra\"\i ss\'e 
limit, under the provisions
that  in (2) above
only a finite number of operations are added and
that,
 in the case of  Boolean 
and Heyting algebras, we consider a subclass
of $\mathcal  S ^{\text{(W)}} $ consisting
of structures having a
fixed (modulo isomorphism) $ \emptyset $-generated substructure.  
  \end{enumerate}
 \end{corollary}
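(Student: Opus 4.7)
The plan is to obtain (1) and (2) as direct applications of Theorem~\ref{superap}, and (3) from standard Fra\"\i ss\'e theory once one checks that the amalgamation process can be carried out inside the finite substructures.

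For (1), I would verify the two hypotheses of Theorem~\ref{superap} for each listed $\mathcal S$. The superamalgamation property is exactly the content of Theorem~\ref{josup}. For the completion hypothesis I would invoke the classical embedding results: every poset embeds into a complete bounded lattice (Dedekind--MacNeille), every (meet or join) semilattice embeds into a complete lattice preserving existing meets/joins, every Boolean algebra embeds into a complete Boolean algebra (e.g.\ via its canonical completion or via the embedding into $\mathcal P(\mathrm{St}(\mathbf B))$), and every Heyting algebra embeds into a complete Heyting algebra (canonical extension). In each case the resulting complete structure belongs to $\mathcal S$ and the embedding is an $\mathcal S$-embedding, so hypothesis~(2) of Theorem~\ref{superap} holds. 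Since (W) is one of the properties (A1e)--(C2) (and (C3) only in the lattice-ordered cases, which is the lattice assumption explicitly required in the corresponding clause of Theorem~\ref{superap}), the conclusion follows. For (2), the same invocation works verbatim, using the ``more general'' half of Theorem~\ref{superap} together with the comparability-preserving part that stems from Lemma~\ref{compa} and Remark~\ref{molt}; no new hypothesis needs to be checked because the completions above do not depend on the additional operations.

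For (3), I would apply the Fra\"\i ss\'e--J\'onsson theorem to the subclass $\mathcal K$ of finite members of $\mathcal S^{(\mathrm W)}$ (or, for Boolean and Heyting algebras, the subclass with a fixed $\emptyset$-generated substructure). The language is countable because only finitely many operations are added; $\mathcal K$ is closed under isomorphism and under substructures by construction; and there are countably many isomorphism types of each finite cardinality. The amalgamation property is inherited from (2), and the joint embedding property in the lattice/Boolean/Heyting cases follows from AP once a common $\emptyset$-generated substructure is fixed (this is the reason for the provision in the statement), while in the poset/semilattice cases the empty structure witnesses JEP directly. The main obstacle, and the only nontrivial point, is that the amalgam produced by Theorem~\ref{superap} must itself be \emph{finite} when $\mathbf A,\mathbf B,\mathbf C$ are finite. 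This is handled by observing that: (i) the superamalgam $\mathbf F^-$ for finite posets, semilattices, lattices, Boolean or Heyting algebras can be taken finite, by the last sentence of Theorem~\ref{josup}; (ii) every finite structure in any of these classes embeds into a finite complete lattice (finite posets embed into their finite Dedekind--MacNeille completion, finite semilattices into a finite lattice, and finite Boolean or Heyting algebras are already complete); and (iii) Lemma~\ref{exte}, applied to the finite set $E$ of the finite completion $\mathbf E^-$, extends $G$ to a total operation on the finite $E$. Hence the amalgamation inside $\mathcal K$ lands in $\mathcal K$, and the standard Fra\"\i ss\'e construction produces the desired countable universal homogeneous limit.
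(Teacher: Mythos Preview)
Your proposal is correct and follows essentially the same route as the paper: verify the hypotheses of Theorem~\ref{superap} via Theorem~\ref{josup} and the classical completion theorems, then invoke Fra\"\i ss\'e's theorem for item~(3). Your treatment of~(3) is in fact more explicit than the paper's, which simply cites Fra\"\i ss\'e's theorem and leaves the finiteness of the amalgam implicit (relying on the last sentence of Theorem~\ref{josup}); your steps (i)--(iii) spelling out why the amalgamation of finite structures stays finite are exactly the details the paper omits.
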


 \begin{proof}
 Classical and well-known results 
\cite[Ch. 1, Theorems 10.6 and 10.7]{Ha}, 
\cite[Theorem 2.1]{Ko}, \cite[Theorem 2.39]{GM},
\cite[Theorem 2.3]{HD} assert 
that, for every choice of $\mathcal  S$ 
as in the statement, each member of $\mathcal  S$ 
can be embedded into a complete bounded lattice
 in the corresponding class.    
Hence the corollary follows from
Theorems
 \ref{josup} and  
 \ref{superap}.

Item (3) follows from Fra\"\i ss\'e Theorem
\cite[Theorem 7.1.2]{H}.
The finiteness assumption is needed in order to have only a countable
 number of nonisomorphic finite structures.   
Under the assumptions, the joint embedding property follows
from the amalgamation property, since it turns out to be equivalent
to the case when $\mathbf  C$ is the 
 fixed $ \emptyset $-generated substructure, 
 for
Boolean and Heyting algebras
 with operations,  
and since we can consider amalgamation over
an empty structure in the other cases.
\end{proof}  

Corollary \ref{corsuperap} applies with the same proof to the classes of 
bounded partially ordered sets, bounded  
lattices, bounded  meet semilattices, bounded  join semilattices
(if maxima and minima are required to be preserved
by embeddings, for example, when they are interpreted as constants, then in item (3)
we need 
 consider a subclass of  $\mathcal  S ^{\text{(W)}} $ with
a fixed $ \emptyset $-generated substructure).

As a way of example, the following classes 
have the superamalgamation property, and the classes of their
finite members have  a Fra\"\i ss\'e limit.
  \begin{enumerate}   
\item
The class of non-trivial Boolean algebras with three closure operations
$K$, $K_1$ and $K_2$ 
such that $K0=K_1 0=K_2 0=0$ and  $Kx \leq K_1x$,  
$Kx \leq K_2 x$ hold for every $x$. 
\item
The class of lattices with a closure operation, an antitone unary operation 
and a $3$-ary operation which is isotone on each component.
\item 
The class of   posets with two $4$-ary operations 
$F_1$ and  $F_2$ which are isotone on the first two components,
antitone   on the last two components and are such that 
$F_1(x,y,z,w) \leq F_2(x,y,z,w)$ always holds.
  \end{enumerate}

In the next section we shall prove that
the sets of  universal  
 consequences of the corresponding 
first-order theories are decidable in cases (1) and (3). This holds in case (2),
as well, if in place of lattices we consider distributive lattices. 

The list of those classes $\mathcal  S$ to which Theorem  \ref{superap}
applies is illustrative and not intended to be exhaustive. 

Recall   that some 
first-order theory $T$  has \emph{model companion}
if $T$ has the same universal consequences of some model complete
theory $T^*$. If in addition $T$ has the amalgamation property,
the theory $T^*$
is a \emph{model completion} of $T$. See \cite{H} for details.

\begin{definition} \labbel{lfdef}   
A universal theory $T$  is \emph{locally finite}
 if every finitely generated model of $T$ 
is  finite. If $T$ is universal in a finite language and 
$T$ is locally finite 
then $T$ is actually \emph{uniformly locally finite}, to the effect that 
there is a function $g_{_T} : \mathbb N \to \mathbb N$  such that, for every
$m \in \mathbb N$, every model of $T$ generated by $m$
elements has cardinality $\leq g_{_T}(m)$ \cite[Lemma 5]{We}.
In particular, such a $T$ has a finite number
of models generated by $m$ elements, since the language
of $T$ is finite.    
 \end{definition}

 The next proposition is folklore; compare \cite[Fact 2.1]{KS}.

\begin{proposition} \labbel{folk}
Suppose that $T$ is a consistent first-order locally finite universal theory
in a finite language. If $T$ has AP and JEP, 
then the class of finite models of $T$ 
 has a Fra\"\i ss\'e limit  $\mathbf M$.
The first-order theory $Th(\mathbf M)$ of $\mathbf M$
  is $\omega$-categorical,
 has quantifier elimination and is
the model completion of $T$.
 \end{proposition}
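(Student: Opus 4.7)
The plan is to verify the hypotheses of Fra\"\i ss\'e's theorem for the class $\mathcal{K}$ of finite models of $T$, and then extract the remaining conclusions from ultrahomogeneity of the resulting limit together with the uniform local finiteness of $T$. First I check that $\mathcal{K}$ is a Fra\"\i ss\'e class in the sense of \cite[Theorem 7.1.2]{H}: closure under isomorphism is built in, the hereditary property follows from universality of $T$, and AP and JEP are given by hypothesis. Countability up to isomorphism is the only nontrivial ingredient, and it follows from the uniform local finiteness recalled in Definition \ref{lfdef} (finitely many $m$-generated models for each $m$) combined with the finiteness of the language. Fra\"\i ss\'e's Theorem then produces the required countable ultrahomogeneous limit $\mathbf M$ whose age is $\mathcal{K}$. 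Moreover $\mathbf M$ is the directed union of its finitely generated, and hence finite, substructures, each of which is a model of $T$; since $T$ is universal and preserved under directed unions, $\mathbf M \models T$, and in particular $T \subseteq Th(\mathbf M)$.

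Next I would obtain $\omega$-categoricity and quantifier elimination at one stroke. By uniform local finiteness of $T$ in a finite language, for each $n$ there is a bound $g_T(n)$ on the cardinality of any $n$-generated model of $T$; in particular the isomorphism type of the substructure $\langle \bar a \rangle$ generated by any $n$-tuple $\bar a$ in a model of $T$ is encoded by a single quantifier-free formula in $\bar x$ (which specifies all equalities and atomic relations among the boundedly many distinct terms in $\bar a$). Consequently, if two $n$-tuples $\bar a, \bar b$ of $\mathbf M$ have the same quantifier-free type, the induced map $\langle \bar a \rangle \to \langle \bar b \rangle$ is an isomorphism of substructures, and ultrahomogeneity of $\mathbf M$ extends it to an automorphism of $\mathbf M$. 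This shows both that $\bar a$ and $\bar b$ have the same complete type (quantifier elimination), and that $\mathrm{Aut}(\mathbf M)$ has only finitely many orbits on $M^n$ (whence $\omega$-categoricity by the Ryll-Nardzewski theorem).

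Finally, for the model completion claim, $Th(\mathbf M)$ is model complete because it has QE. To promote ``model companion'' to ``model completion'' in the terminology adopted just before the proposition, it suffices to show that $T$ and $Th(\mathbf M)$ share the same universal consequences. The inclusion $T \subseteq Th(\mathbf M)$ handles one direction. Conversely, suppose $\forall \bar x\, \psi(\bar x) \in Th(\mathbf M)$ with $\psi$ quantifier-free but $\mathbf N \not\models \psi(\bar a)$ for some $\mathbf N \models T$ and some tuple $\bar a$: then the substructure of $\mathbf N$ generated by $\bar a$ is a finite model of $T$ on which $\psi(\bar a)$ fails, and as it lies in $\mathcal{K}$ it embeds into $\mathbf M$, contradicting $\mathbf M \models \forall \bar x\, \psi$. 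The only step requiring genuine care is the QE argument in the second paragraph, where the finiteness of the language is essential in order for the isomorphism type of $\langle \bar a \rangle$ to be captured by a single quantifier-free formula and thus transported by ultrahomogeneity; the rest is bookkeeping on the standard Fra\"\i ss\'e machinery.
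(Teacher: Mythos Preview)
Your argument is correct and follows essentially the same route as the paper: verify Fra\"\i ss\'e's hypotheses, invoke uniform local finiteness (via \cite[Lemma 5]{We}) to get $\omega$-categoricity and QE of $Th(\mathbf M)$, and then compare universal consequences exactly as you do. The paper simply cites \cite[Theorem 7.4.1]{H} for the QE/$\omega$-categoricity step rather than unpacking the orbit argument; your direct proof via ultrahomogeneity is the standard content of that theorem.

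One small point worth making explicit: the hypothesis gives AP and JEP for the class of \emph{all} models of $T$, not directly for the class $\mathcal K$ of finite models, so strictly speaking you should note (as the paper does in its expanded version) that since $T$ is universal and locally finite, any amalgam or joint embedding can be cut down to the finitely generated substructure spanned by the images, which is again a finite model of $T$; hence AP and JEP transfer to $\mathcal K$.
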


  \begin{proof} 
The proposition follows from \cite[Theorem 7.1.2 and Theorem 7.4.1]{H}.
The argument showing that $Th(\mathbf M)$
is
the model completion of $T$ can be found in \cite[Fact 2.1(3)]{KS}.
There the result is stated under the stronger assumption
of uniform local finiteness, but, under the hypotheses
of the proposition, it is equivalent
to local finiteness by the mentioned
Lemma 5 in \cite{We}.
\arxiv{

In more detail, since $T$ is universal and locally finite,
AP and JEP for $T$ imply
AP and JEP for the class of finite models of $T$.
The hereditary property holds since
$T$ is assumed to be universal.
Since $T$ locally finite  in a finite language, 
then $T$ has countably many finite models
up to isomorphism.
Thus
Fra\"\i ss\'e Theorem
\cite[Theorem 7.1.2]{H}
provides the existence of a Fra\"\i ss\'e limit $\mathbf M$.

As we mentioned in Definition \ref{lfdef},
$T$ is uniformly locally finite, by \cite[Lemma 5]{We}. 
 Then, by
\cite[Theorem 7.4.1]{H}, 
 $Th(\mathbf M)$ is $\omega$-categorical and
 has quantifier elimination, in particular,
$Th(\mathbf M)$ is model complete.
The model $\mathbf M$ is constructed as the union of a chain
of models of $T$, hence  $\mathbf M$ is a model of $T$,
since $T$ is universal.  
Hence the theory $Th(\mathbf M)$ contains  $T$.
Conversely, if some universal sentence $\varphi$   fails in some model of 
$T$, then $\varphi$   fails in a finite model of $T$, since $T$
is locally finite. But every finite model of $T$ can be embedded
in $\mathbf M$, since $\mathbf M$ is a universal model, thus $\varphi$  
fails in $\mathbf M$. Hence $T$ and $Th(\mathbf M)$
have the same universal consequences, and this means that 
$Th(\mathbf M)$ is the model completion of $T$, since 
$Th(\mathbf M)$ is model complete and $T$ has AP.} 
\end{proof}

\begin{corollary} \labbel{smk} 
The first-order theory $T$ of
join semilattices with a closure operation
has model completion.

In more detail, if $\mathbf M$
is the Fra\"\i ss\'e limit of the class of finite
join semilattices with a closure operation,
 then the first-order theory $Th(\mathbf M)$  is $\omega$-categorical,
 has quantifier elimination and is
the model completion of $T$.

Dually, the above results apply to meet semilattices 
with an interior operation.
\end{corollary}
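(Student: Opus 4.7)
The plan is to reduce the corollary to Proposition \ref{folk}, whose four hypotheses are: consistency, universality in a finite language, AP and JEP, and local finiteness. The language $\{\vee, K\}$ is finite and $T$ is axiomatized universally (semilattice axioms together with $a \le Ka$, $a \le b \to Ka \le Kb$, and $KKa = Ka$); consistency is witnessed by the one-element algebra. Corollary \ref{corsuperap}(1), applied to the class $\mathcal{S}$ of join semilattices with property (B3), gives the superamalgamation property and in particular AP. Since the language contains no constants, the empty join semilattice sits inside every model of $T$, so amalgamating any two models over it yields JEP.

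The main obstacle, and the only step requiring genuine work, is local finiteness. Given generators $a_1, \dots, a_m$, I would introduce the finite set
\[
B = \{a_1, \dots, a_m\} \cup \bigl\{K\bigl(\textstyle\bigvee_{j \in J} a_j\bigr) : \emptyset \neq J \subseteq \{1, \dots, m\}\bigr\},
\]
of cardinality at most $m + 2^m - 1$, and show that every element in the subalgebra generated by $a_1, \dots, a_m$ is a nonempty finite join of elements of $B$. Closure under $\vee$ is automatic, and the content is closure under $K$. Given $u = v \vee \bigvee_i K(w_i)$ with $v$ and each $w_i$ a join of generators, set $w = v \vee \bigvee_i w_i$, itself a join of generators. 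Extensiveness gives $w \le u$, whence $K(w) \le K(u)$ by isotony; conversely, isotony yields $K(w_i) \le K(w)$ while $v \le K(v) \le K(w)$, so $u \le K(w)$ and hence $K(u) \le KK(w) = K(w)$ by idempotency. Therefore $K(u) = K(w) \in B$. It follows that the generated subalgebra has cardinality at most $2^{|B|} - 1$, so $T$ is (uniformly) locally finite.

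With all four hypotheses of Proposition \ref{folk} in place, that proposition delivers the Fra\"\i ss\'e limit $\mathbf M$ of the class of finite join semilattices with a closure operation (whose existence alone is also recorded in Corollary \ref{corsuperap}(3)), together with the assertions that $Th(\mathbf M)$ is $\omega$-categorical, admits quantifier elimination, and is the model completion of $T$. The dual statement for meet semilattices with an interior operation follows by the identical argument using case (B4) of Corollary \ref{corsuperap}(1) in place of (B3), replacing joins by meets and extensiveness by contractiveness throughout the local-finiteness computation.
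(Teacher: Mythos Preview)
Your proposal is correct and follows essentially the same route as the paper: reduce to Proposition \ref{folk}, obtain AP from Corollary \ref{corsuperap}(1) with (B3), get JEP by amalgamating over the empty structure, and prove local finiteness via the identity $K\bigl(v \vee \bigvee_i K(w_i)\bigr)=K\bigl(v \vee \bigvee_i w_i\bigr)$, which is exactly the paper's equation \eqref{top} in Lemma \ref{jsllf}. The only cosmetic difference is that the paper isolates the local-finiteness argument as a separate lemma and phrases the normal form as \eqref{semg}, whereas you package it via the auxiliary set $B$; the bounds and the underlying computation coincide.
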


 \begin{proof} 
 By Corollary \ref{corsuperap} 
the theory of join semilattices with a closure operation
has AP and, as mentioned in the proof,
this implies JEP, since we can consider
$\mathbf  C$ as an empty model.  
In the next lemma we show that the theory
of join semilattices with a closure operation
is locally finite.
 The result then follows from Proposition \ref{folk}. 
\end{proof}

\begin{lemma} \labbel{jsllf}
The theory
of join semilattices with a closure operation
is locally finite.
 \end{lemma}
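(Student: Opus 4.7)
The plan is to bound the cardinality of the subalgebra $\mathbf{M}$ generated by any $n$ elements $a_1,\dots,a_n$ of a join semilattice with closure $K$, by means of a normal-form argument. Every element of $\mathbf{M}$ is represented by some term in the language $\{\vee,K\}$, and flattening via the associativity, commutativity and idempotency of $\vee$ shows that every such term equals a join $\bigvee_i s_i$ in which each summand $s_i$ is either a generator $a_j$ or has the form $K(u_i)$ for some previously built element $u_i$. Hence the task reduces to bounding the number of distinct elements of the form $K(u)$ that occur in $\mathbf{M}$.

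My first step would be to establish the identity $K(x\vee y)=K(K(x)\vee y)$ in an arbitrary join semilattice with closure: extensiveness gives $x\vee y\le K(x)\vee y$, while isotonicity together with extensiveness gives $K(x)\vee y\le K(x\vee y)$; applying $K$ and using idempotence squeezes out the desired equality. Applying this identity symmetrically then yields also $K(x\vee y)=K(K(x)\vee K(y))$.

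My second step would be to define a map $\pi$ on terms that strips every occurrence of $K$, by $\pi(a_i)=a_i$, $\pi(s\vee t)=\pi(s)\vee\pi(t)$, and $\pi(K(t))=\pi(t)$, and to prove by induction on the term $t$ that $K(t)=K(\pi(t))$ holds in every join semilattice with closure. The generator case and the case $t=K(s)$ are immediate; the join case $t=s_1\vee s_2$ is handled by
\[
K(s_1\vee s_2)=K(K(s_1)\vee K(s_2))=K(K(\pi(s_1))\vee K(\pi(s_2)))=K(\pi(s_1)\vee\pi(s_2)),
\]
using the first step twice together with the inductive hypothesis.

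The remainder would be a counting argument. Since $\pi(u)$ is $K$-free, inside $\mathbf{M}$ it equals $\bigvee_{i\in I}a_i$ for some non-empty $I\subseteq\{1,\dots,n\}$, so $K(u)=K(\bigvee_{i\in I}a_i)$; hence there are at most $2^n-1$ distinct elements of the form $K(u)$ in $\mathbf{M}$, and every element of $\mathbf{M}$ is a join of elements of the finite set
\[
\{a_1,\dots,a_n\}\cup\bigl\{K\bigl(\textstyle\bigvee_{i\in I}a_i\bigr):\emptyset\ne I\subseteq\{1,\dots,n\}\bigr\},
\]
of cardinality at most $n+2^n-1$. This gives $|\mathbf{M}|\le 2^{n+2^n-1}$ and hence local finiteness. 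The main obstacle is the stripping identity $K(t)=K(\pi(t))$ of the second step; the delicate point is that since $K$ is not assumed additive, $K(x\vee y)$ need not equal $K(x)\vee K(y)$, yet the weaker identity $K(x\vee y)=K(K(x)\vee K(y))$ that drives the induction still follows from the pure closure axioms.
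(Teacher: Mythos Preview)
Your proof is correct and follows essentially the same route as the paper: both establish the key identity that $K$ of a join involving $K$-subterms equals $K$ of the same join with the inner $K$'s removed (the paper writes it as $K(a_1\vee\dots\vee a_r\vee Kb_1\vee\dots\vee Kb_s)=K(a_1\vee\dots\vee a_r\vee b_1\vee\dots\vee b_s)$, you derive the binary case $K(x\vee y)=K(K(x)\vee y)$ and iterate), and both conclude that every element lies in the join-closure of $\{a_1,\dots,a_n\}\cup\{K(\bigvee_{i\in I}a_i):\emptyset\ne I\subseteq\{1,\dots,n\}\}$. Your stripping map $\pi$ and the induction on terms is just a slightly more formal packaging of the paper's claim that applying $K$ to an expression in normal form yields $K$ of a plain join of generators.
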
 

\begin{proof} 
Suppose that $\mathbf S$ is a  
join semilattice with a closure operation $K$ and   
suppose that $\mathbf S$ is generated by 
the elements $x_{1}, \dots, x_{n}$. 
We claim that each element of $S$ can be written in the form
\begin{equation}\labbel{semg}
\begin{aligned}
x_{j_1} \vee x_{j_2} \vee  \dots \vee x_{j_h}
&\vee K( x_{\ell_{1,1}} \vee x_{\ell_{1,2}} \vee  \dots \vee x_{\ell_{1,k(1)}})
\vee \dots
\\ 
& \vee
K( x_{\ell_{m,1}} \vee x_{\ell_{m,2}} \vee  \dots \vee x_{\ell_{m,k(m)}}),
 \end{aligned}    
\end{equation}    
with $j_1, \dots, j_h, \ell_{1,1}, \dots, \ell_{m,k(m)} \leq n$,
and
where possibly $h=0$,
that is, we have a join of  expressions with a closure, and
possibly $m=0$, that is, we have an expression without closures.
The expression \eqref{semg}
is not ambiguous because of associativity of $\vee$.
Because of commutativity and idempotence, we can assume that   
 the  $j_i$s are all distinct, and that, for each  
$p \leq m$, the indices $\ell_{p,1}, \dots, \ell_{p,k(p)}$
are all distinct. Moreover, we can assume that, letting $p$ vary,  the sets
$ \{\ell_{p,1}, \dots, \ell_{p,k(p)}  \} $ are all distinct.
Hence, up to semilattice equivalence, we have at most
$2^n + 2^{2^n}$ expressions of the form \eqref{semg}
(of course, this is an overestimated rough  bound).   

The join of two expressions of the form \eqref{semg}
has still the form \eqref{semg} and can be reduced
as above using associativity,  commutativity and idempotence
of $\vee$. It remains to show that  if 
$\sigma$ is an expression of the form \eqref{semg},
then $K \sigma $ can be reduced to the form \eqref{semg}
by using the properties of a closure in a join semilattice.
In fact we will show that 
\begin{equation}\labbel{top}   
K(a_1 \vee \dots \vee a_r \vee Kb_1 \vee \dots \vee Kb_s)
= K(a_1  \vee \dots \vee a_r \vee b_1 \vee \dots \vee b_s)
  \end{equation}   
holds in every  join semilattice with a closure operation,
 for all $a_1, \dots b_s$,  thus if we 
apply $K$ to \eqref{semg}, we get
$K(x_{j_1} \vee \dots \vee x_{\ell_{m,k(m)}})$,
a very special expression still of the form \eqref{semg}.

So let us prove \eqref{top}.   
Since $Kb_1 \geq b_1, \dots$, then
 $a_1 \vee \dots  \vee Kb_1 \vee \dots \vee Kb_s
\geq a_1 \vee \dots  \vee b_1 \vee \dots \vee b_s$,
thus, applying $K$ and by isotony, we get 
 $K(a_1 \vee \dots \vee Kb_s)
\geq K(a_1 \vee \dots \vee b_s)$.
For the converse, by extensiveness and isotony,
we have 
 $a_1 \leq K(a_1 \vee \dots \vee b_s)$, \dots,
 $Kb_s \leq K(a_1 \vee \dots \vee b_s)$,
hence 
 $a_1 \vee \dots \vee Kb_s
\leq K(a_1 \vee \dots \vee b_s)$.
Applying $K$, we get
 $K(a_1 \vee \dots \vee Kb_s)
\leq KK(a_1 \vee \dots \vee b_s)=
K(a_1 \vee \dots \vee b_s)$ by isotony and 
idempotence.
\end{proof}

Notice that, in contrast with
Lemma \ref{jsllf}, the theory of meet semilattices
with a closure operation is not locally finite.
 See \cite[Section 2]{Ja}, in particular, p. 3 and Figure 1 
on p. 13 therein.

\section{Decidability of  universal  
 consequences} \labbel{lfsec} 

If $T$ is a universal locally finite theory,
then a 
 universal-existential sentence $\varphi$   is a consequence of $T$ 
if and only if $\varphi$  holds in every finite model of $T$.
If we extend $T$ in a language with added operations,
then the extended theory $T^+$ is not necessarily locally finite.
However, we can retain the above characterization,
 limited to  universal consequences,
when we add  operations of the kind considered in 
the present note and every finite model of $T$ 
 can be extended to a finite lattice-ordered model.  
Compare \cite[Appendix IV]{MT} 
for a special similar situation.

The present section relies only on Section \ref{extsec}
and does not deal with the amalgamation property. 

\begin{theorem} \labbel{decid}
Suppose that $T$ is a locally finite universal theory 
in a language $\mathscr L$ with a specified order relation 
$\leq$ and suppose that 
 every finite model of $T$ 
 can be extended to a finite lattice-ordered model of $T$. 

Suppose that (W) is any one of the properties
(A1e) -  (C2)  
 listed in  Definition \ref{list} and
$\mathscr L'=\mathscr L \cup \{ K \} $,
where $K$ is a new  operation symbol of corresponding arity.
 Let $T^{\text{(W)}}$ in the language $\mathscr L'$
be the extension of $T$  obtained by
adding axioms   asserting that $K$ satisfies (W). Then
the following hold. 
  \begin{enumerate}   
 \item 
If $\varphi$  is a  universal  
 sentence in $\mathscr L'$
and $\varphi$  fails in some model of  $T^{\text{(W)}}$,
then $\varphi$  fails in some finite model of  $T^{\text{(W)}}$.
\item
 Suppose that  $\mathscr L'$ is finite and there is an 
effectively computable  function 
$h_{_T}$ such that, for every $n \in \mathbb N$,
every model of $T$ generated by  $ n$ elements  
can be extended to a  lattice-ordered model of $T$
of cardinality $\leq h_{_T}(n)$.
Then the set of all the 
universal 
 consequences
of  $T^{\text{(W)}}$ is decidable.  
  \end{enumerate}

More generally, the above items (1) - (2) hold if we add 
any number of operations, possibly of distinct arities, 
possibly satisfying distinct
properties chosen from (A1e) -   (C2).
If $T$ contains the axioms for (and in the language of) lattices, then 
(W) might be chosen to be (C3), too.  
 \end{theorem}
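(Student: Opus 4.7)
The plan is to prove (1) by extracting a finite $\mathscr L$-skeleton of a hypothetical counterexample, upgrading it to a finite lattice-ordered model of $T$, and then invoking Lemma~\ref{exte} to reinstate the operation~$K$; statement (2) will then follow by bounded enumeration, once one observes that the size of the constructed countermodel is effectively bounded in $\varphi$.

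For (1), write $\varphi = \forall \bar x\,\psi(\bar x)$ with $\psi$ quantifier-free in $\mathscr L'$, and let $\mathbf M \models T^{(W)}$ satisfy $\neg\psi(\bar a)$ for some tuple $\bar a \in M$. Let $T_\psi$ be the finite set of subterms of $\psi(\bar x)$, and set $E := \{\, t(\bar a) \mid t \in T_\psi\,\} \subseteq M$. Take $\mathbf F_0$ to be the $\mathscr L$-substructure of the reduct $\mathbf M^-$ generated by $E$; this is finite by local finiteness of $T$, and by hypothesis extends to a finite lattice-ordered model $\mathbf F \models T$, which is automatically a bounded complete lattice. Now define a partial operation $G$ on $F$ (or on $F^r$) by transporting values from $\mathbf M$: for each subterm $K(s_1,\dots,s_r) \in T_\psi$ set
\[
G\bigl(s_1(\bar a),\dots,s_r(\bar a)\bigr) \;:=\; K_{\mathbf M}\bigl(s_1(\bar a),\dots,s_r(\bar a)\bigr),
\]
which is a well-defined function from a subset of $E^r$ into $E$.

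The key observation is that the inclusions $\mathbf F_0 \subseteq \mathbf M^-$ and $\mathbf F_0 \hookrightarrow \mathbf F$ are $\mathscr L$-embeddings, so they preserve the order and all $\mathscr L$-relations; thus the order induced on $E$ by $\mathbf F$ agrees with that induced by $\mathbf M$. Consequently every universal clause among \eqref{ide}--\eqref{ant}, or \eqref{ison} in the $n$-ary case, that $K_{\mathbf M}$ verifies in $\mathbf M$ is inherited by $G$ inside $\mathbf F$, because each such clause only involves comparisons among elements of $E$ and their $G$-images, all of which live in $E$. The relevant line of Lemma~\ref{exte} then extends $G$ to a total operation $K_{\mathbf F}$ on $\mathbf F$ satisfying (W), and the expansion of $\mathbf F$ by $K_{\mathbf F}$ is a finite model of $T^{(W)}$. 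A straightforward induction on $t \in T_\psi$, using that $K_{\mathbf F}$ agrees with $K_{\mathbf M}$ on the domain of $G$, shows that $t(\bar a)$ evaluates to the same element of $F$ in $\mathbf M$ and in this expansion, so $\psi(\bar a)$ fails there as well, proving (1). Multiple operations, comparability conditions, and case (C3) are handled identically via Lemma~\ref{compa} and Remark~\ref{molt}, the extra lattice-term inequality of (C3) transferring because the lattice operations on $E$ agree in $\mathbf F$ and $\mathbf M$.

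For (2), one reads off $k := |T_\psi|$ effectively from $\psi$, so $|E| \leq k$ and $\mathbf F_0$ is $\le k$-generated; by the effective bound $h_T$, the countermodel $\mathbf F$ can be chosen of cardinality at most $h_T(k)$. Since $\mathscr L'$ is finite, the collection of $\mathscr L'$-structures of cardinality $\leq h_T(k)$ is finite and effectively enumerable, and for each one the validity of the (finitely many) axioms of $T^{(W)}$ and of $\varphi$ is mechanically checkable. By~(1), $\varphi$ is a universal consequence of $T^{(W)}$ iff it holds in every such structure that models $T^{(W)}$, and this yields the decision procedure. The main obstacle is in step~(1): one must justify that reading $G$ off from $\mathbf M$ really produces a legitimate input to Lemma~\ref{exte} inside the alien lattice $\mathbf F$, and this rests on the observation that the universal conditions of the lemma involve only elements of $E$, on which the orders of $\mathbf F$ and $\mathbf M$ coincide.
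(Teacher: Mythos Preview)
Your proof is correct and follows essentially the same route as the paper: extract a finite $\mathscr L$-substructure from a countermodel, extend it to a finite lattice-ordered model of $T$, and use Lemma~\ref{exte} to reinstall $K$ satisfying (W). The only cosmetic difference is that the paper first flattens $\varphi$ by replacing each $K$-subterm with a fresh variable (so that all remaining terms are $K$-free and the partial operation is read off from the equalities $K(t_{j,1},\dots,t_{j,n})=y_j$), whereas you keep the original formula and handle nested occurrences of $K$ by your subterm induction; both devices serve the same purpose and yield equivalent effective bounds on the size of the finite countermodel.
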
 

\begin{proof}
(1) Suppose that $\varphi $ is $  \forall \bar x \psi$,
with $\psi$  quantifier-free,   
and $\varphi$  fails in some model of 
$T^{\text{(W)}}$. 
If some term of the form $K(t_{1}, \dots, t_{n})$
occurs in $\varphi$ and $K$ 
does not occur in the terms $t_{1}, \dots, t_{n}$, let 
 $y$ be a new variable not occurring in $\varphi$.
Then $\varphi$  is logically equivalent to 
$\forall \bar x y( K(t_{1}, \dots, t_{n}) {\,=\,} y
\Rightarrow  \psi^* )$, where
$\psi^*$ is obtained from $\psi$
by substituting all the occurrences of the term $K(t_{1}, \dots, t_{n})$ 
for $y$.    
Iterating the above procedure, it is no loss of generality
to assume that 
 $\varphi$   is of the form  $\forall \bar x \bar y \psi$, where   
\begin{equation}\labbel{form}   
\psi : \quad  K(t_{1,1}, \dots, t_{1,n}){\,=\,} y_1\,\& \dots \& \,
 K(t_{m,1}, \dots, t_{m,n}) {\,=\,} y_m \implies  \psi^* 
\end{equation}    
with $t_{1,1}, \dots, t_{m,n}, \psi^*$ $K$-free and 
$\psi^*$  quantifier-free.

 By assumption, there is some model $\mathbf A$ of 
$T^{\text{(W)}}$ such that $\varphi$  fails, hence,
for an appropriate assignment of elements of $A$ 
to the variables of $\psi$, 
the evaluation of $\psi$ fails in $\mathbf A$. 
This means that, for the given assignment, 
$K(t_{1,1}, \dots, t_{1,n}){\,=\,} y_1, \dots, \allowbreak  
 K(t_{m,1}, \dots, t_{m,n}) {\,=\,} y_m$ hold and
 $\psi^*$ fails in $\mathbf A$.
Let $t_{i,j} ^{\mathbf A} $ 
denote the evaluation of $t_{i,j}$ under the assignment and
let $X  $ be the set of the $n$-tuples of $A$ 
having the form   $(t_{j,1}^{\mathbf A}, \dots, t_{j,n}^{\mathbf A}) $,
 for $1 \leq j \leq m$.  
Let $V:X \to A $ be defined by 
$V(t_{j,1}^{\mathbf A}, \dots, t_{j,n}^{\mathbf A})= 
K _{\mathbf A}(t_{j,1}^{\mathbf A}, \dots, t_{j,n}^{\mathbf A})$,
which is also equal to $y_j ^{\mathbf A} $,
since $K (t_{j,1}, \dots, t_{j,n})=y_j$ holds in $\mathbf A$. 
Notice that in the unary case $X$ and $V$ are called $D$ and $G$ in
Lemma \ref{exte}.

 Since $\mathbf A$ is a model of 
$T^{\text{(W)}}$ and $K _{\mathbf A}$ 
is an extension of $V$ satisfying (W), then the necessary condition
in Lemma \ref{exte} for
the satisfaction of property (W) holds
(as we mentioned at the beginning of the proof of Lemma \ref{exte},
no completeness assumption is needed to prove the necessary condition).

 Let $\mathbf B^-$ be the subreduct  
 of $\mathbf A$  generated 
in the language $\mathscr L$ by
 the elements assigned to the variables of $\psi$ under the given assignment.
Thus $\mathbf B^-$ is a finite model of $T$, since $T$ is universal
and  locally finite.
By construction, $X \subseteq (B^-)^n$
and  $V$ is actually a function from $X$ to $B^-$. 
By assumption, $\mathbf B^-$ 
 can be extended to a finite lattice-ordered model
$\mathbf  C^-$  of $T$.
Hence $\mathbf  C^-$ is complete, since $C^-$ is finite.    
We can apply Lemma \ref{exte}
in order to extend $V$ on the whole of 
 $(C^-)^n$   
to an operation
$K_{\mathbf C}$ in such a way that 
$K_{\mathbf C}$ 
extends $V$ and (W) holds in the expanded model 
  $\mathbf C$.  
Thus 
  $\mathbf C$  
 is a 
model of $T^{\text{(W)}}$,
since 
  $\mathbf C^-$  
  is a model of $T$. 

 We have that $t_{i,j} ^{\mathbf A} = 
t_{i,j} ^{\mathbf B^-}=t_{i,j} ^{\mathbf C}$ hold,  
 for all
pairs of indices, since the terms $t_{i,j}$ are $K$-free,
 since
  the variables of $\psi$ are interpreted in $B^-$,
because of the definition of $\mathbf  B^-$,
and since $ C \supseteq B$.  
Since $K_{\mathbf C}$ 
extends $V$ and 
because of the definition of $V$,
$K(t_{1,1}, \dots, t_{1,n}){\,=\,} y_1, \dots, \allowbreak 
 K(t_{m,1}, \dots, \allowbreak t_{m,n}) {\,=\,} y_m$ hold 
in
  $\mathbf C$  
 under the given assignment.

On the other hand, since $\psi^*$  is $K$-free,
 quantifier-free  
 and false in $\mathbf A$, then
 by the definitions of $\mathbf B^-$ and   $\mathbf C$,   
$\psi^*$ is false in   $\mathbf C$.  
This shows that $\varphi$  is false in   $\mathbf C$,  
 thus 
$\varphi$  fails in a finite model of $T^{\text{(W)}}$.

(2) Let $\varphi$  be a  universal  
sentence in the language of 
$T^{\text{(W)}}$.
The proof of (1) shows that $\varphi$ fails in some model of
$T^{\text{(W)}}$ if and only if 
$\varphi$  fails in some  lattice-ordered finite model of
$T^{\text{(W)}}$ whose $\mathscr L$-reduct extends
a model of $T$ generated by $k$ elements, where  
 $k$  can be effectively determined
and depends only on the formula $\varphi$.
In fact, if   $\varphi$ contains $ \ell$ variables   
 and $K$  occurs $m$ times
in $\varphi$, then $k \leq  \ell + m$. 

Thus $\varphi$  is a consequence of  $T^{\text{(W)}}$
if and only if $\varphi$  holds in every model of $T^{\text{(W)}}$
of cardinality 
 $\leq h_{_T}(k)$.  Since $h_{_T}$  
 is effectively computable
and the language of $T^{\text{(W)}}$ is finite,
one can effectively check the validity of $\varphi$  in all
these models. This  provides
a decision procedure for the validity of $\varphi$
in all models of $T^{\text{(W)}}$.

The last 
 paragraph  
 in the theorem is proved in the same way.
In the general case, the premises in $\psi$ in \eqref{form}
might involve distinct operations, but we can always manage
to have all the terms $t_{i,j}$ to be $\mathscr L$-terms.
In (2) the extended language  is finite by assumption; as far as (1)
is concerned, notice that a first-order formula involves only a 
finite set of symbols; then in 
 $\mathbf C$  
 all the remaining
symbols can be interpreted in an arbitrary way, for example, 
 as the projection onto the first component. Notice that
in the case of many operations $T^{\text{(W)}}$
says nothing about the mutual relationships among the operations.
To prove the last statement, observe that
if $T$ is a theory of lattices in the language of lattices,
then every lattice term is evaluated in the same way in
$\mathbf A$ and $\mathbf  B^- $, which in the present
situation can be
taken as $ \mathbf  C^-$.
Hence any condition of the form 
$t(x_1, \dots, x_i) \leq V(\bar x)$ is preserved.   
 \end{proof}

\begin{corollary} \labbel{badecid} 
 Let $T$ be the extension of the theory of Boolean algebras
in a language  with a further finite set
of operations, and with further axioms asserting 
that each operation satisfies some condition  
chosen among (A1e) - (C3) from Definition \ref{list}.
Then the  set of  universal  
 consequences of $T$
is decidable.

 The same applies to  distributive lattices 
in place
of Boolean algebras,  more generally,
to any locally finite universal theory\footnote{ provided the function
$g_{_T}$ from Definition \ref{lfdef} is effectively computable.} of
 lattices, and,
excluding case (C3),  
to partially ordered sets,
  join semilattices,  meet semilattices.
\end{corollary}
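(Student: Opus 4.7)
The plan is to verify the hypotheses of Theorem \ref{decid}(2) for each of the listed base theories $T_0$ (Boolean algebras, distributive lattices, locally finite lattice theories, partially ordered sets, join semilattices, meet semilattices) and then invoke the theorem directly, regarding the theory $T$ of the corollary as the extension $T^{\text{(W)}}$ of the theorem. In each case the language of $T_0$ is finite and contains the order symbol, $T_0$ is universal, and what must be produced is an effectively computable local-finiteness bound $g_{_{T_0}}$ together with an effectively computable $h_{_{T_0}}$ bounding the size of a finite lattice-ordered extension of a finitely generated model of $T_0$.

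For the base theories that are already theories of lattices (Boolean algebras, distributive lattices, and the lattice theories covered by the footnote), every finite model is itself lattice-ordered, so one may take $h_{_{T_0}} := g_{_{T_0}}$, with $g_{_{T_0}}(n) = 2^{2^n}$ for Boolean algebras, the $n$th Dedekind number for distributive lattices (effectively computable, if slowly), and the hypothesized function for lattices. Option (C3) is admissible in these three cases because the base language already contains $\vee$ and $\wedge$, as required by the last sentence of Theorem \ref{decid}.

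For posets and for join and meet semilattices the genuine step is producing the lattice-ordered extension, since these structures are not in general lattices. Local finiteness is immediate: a finitely generated poset coincides with its generating set, and a semilattice on $n$ generators has at most $2^n$ elements. For a finite poset $\mathbf{P}$ the down-set embedding $a \mapsto \{x \in P : x \leq a\}$ into the lattice $\mathcal{O}(\mathbf{P})$ of down-sets exhibits $\mathbf{P}$ as an ordered substructure of a finite distributive lattice of cardinality at most $2^{|P|}$. For a finite join semilattice $\mathbf{S}$, adjoining a bottom element (if absent) produces a finite join semilattice in which every pair has a common lower bound and hence a meet, yielding a finite lattice-ordered extension that still contains $\mathbf{S}$ as a join-subsemilattice; the case of meet semilattices is dual. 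In each of these three cases $h_{_{T_0}}(n) \leq 2^n + 1$ is an effective bound, and (C3) is correctly excluded because the base language lacks the lattice operations.

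With these verifications in hand, Theorem \ref{decid}(2) applies and yields decidability of the universal consequences of $T$. Since the theorem already permits any finite number of new operations of arbitrary arities with axioms drawn from the full list (A1e)--(C2) (or (C3) in the lattice-based cases), the extension described in the corollary falls directly within its scope and no further adaptation is needed. There is really no serious obstacle: the entire argument is bookkeeping around the effective computability of the three bounds, all of which are classical.
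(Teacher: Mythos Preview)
Your proof is correct and matches the paper's (implicit) approach: the corollary is stated immediately after Theorem \ref{decid} without a separate proof, the intent being exactly what you carried out---verify local finiteness and produce an effective $h_{_{T_0}}$ for each base theory, then invoke \ref{decid}(2). Your treatment is in fact more explicit than the paper's, spelling out the Dedekind-number bound for distributive lattices, the down-set embedding for posets, and the adjoin-a-bottom trick for semilattices; the only cosmetic slip is that $h_{_{T_0}}(n)=2^n$ (rather than $2^n+1$) already suffices in the poset and semilattice cases, which is harmless.
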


 In many cases Corollary \ref{badecid},
as well as  the last paragraph in the statement of Theorem \ref{decid},
can be generalized by adding comparability
conditions among operations satisfying the same property.
We leave details to the reader.  

Generally, for a  theory $T$ as in Corollary \ref{badecid}, the set of 
\emph{all} the first order consequences
of $T$ is not
 decidable. Indeed, the set of 
 the first order consequences of
the theory of Boolean algebras with an additive
closure operation (called \emph{closure algebras} in the literature)
is not decidable
\cite[footnote 19]{MT}, \cite{Gr}.
Were the consequences of a theory $T$ as in Corollary \ref{badecid}
decidable (except,  possibly,  
 for the cases of an involution and
of an antitone operation),  we could add as a premise a finite set of
sentences characterizing closure algebras, which would produce
a decision procedure for the consequences of the theory of closure
algebras, a contradiction. Notice that the property that, say, a poset
is (the order-reduct of) a Boolean algebra can be expressed by a first-order
sentence in the language of posets.

As another observation, notice that
 the proof of  
 Corollary \ref{badecid} does not apply 
to the theory of lattices, which is not locally finite.
On the other hand, the results in Section \ref{sapimp} 
 do not apply to distributive lattices, which have the 
amalgamation property but not the strong amalgamation property \cite{FG}.
This implies that the amalgamation property is 
generally destroyed by
adding further operations, 
 as exemplified in 
Example \ref{lopex}(a).

\section{Further remarks} \labbel{fur}

\begin{remark} \labbel{HHrem}
(a) The assumption that $HHx \leq Hx$, for every $x \in P$,
is necessary in   Lemma \ref{HH}. Consider the 3-element chain
$P= \{  a,b,c\} $ with $a < b <c$.
Let $Ha=b$, $Hb=Hc=c$, thus $H$ is isotone,
but $c=HHa \centernot \leq Ha=b$.
Let  $K_1 a=K_1 b=b$, $K_1 c=c$,
 $K_2 a=a $, $ K_2 b=K_2 c=c$,
thus $K_1$ and $K_2$ are both isotone, idempotent and smaller than $H$.
However, the only idempotent operation
larger than both   $K_1$ and $K_2$ is the constant function with value $c$,
which is not smaller than $H$. 

Notice that, in the above example, 
 the operations $K_1$ and $K_2$  are also extensive.
Thus in  Lemma \ref{HH} the assumption $HHx \leq Hx$
is necessary also in the extensive case
(in which case the assumption reads $HHx = Hx$,
hence in this case the Lemma is trivially proved by taking $K=H$).

(b) The assumption that every nonempty infinite
chain has a meet in $\mathbf P$ is necessary in 
Lemma \ref{HH}. For example, if $\mathbf P$ is the
ordered  set $\mathbb Z$
of the integers and $H$ is the predecessor function,
then in $\mathbf P$ there is no  idempotent operation
 smaller than $H$. 

(c) If in the above example we add a minimum $-\infty$
to $\mathbb Z$ and set $H(-\infty)=-\infty$,
  the assumptions in  Lemma \ref{HH} are met.
The example of $\mathbb Z \cup \{ -\infty \} $ shows 
that in the proof of Lemma \ref{HH} a finite iteration of 
the $K^ \alpha $s 
 is generally not sufficient.
 
(d) Some completeness assumption is necessary in 
Proposition \ref{minb2}. Again on $\mathbb Z$, define
\begin{equation*}      
 K_1 x=\begin{cases}
   x-1 & \text{if  $ x  $ is even},\\
 x   & \text{if  $ x  $ is odd,}
\end{cases}
\qquad
 K_2 x=\begin{cases}
   x & \text{if  $ x  $ is even},\\
 x-1   & \text{if  $ x  $ is odd.}
\end{cases}
  \end{equation*}
 Both $K_1$ and $K_2$
are isotone and idempotent, but  
on $\mathbb Z$   there is no  idempotent operation
 smaller than both $K_1$ and $K_2$.
 \end{remark}

\begin{remark} \labbel{larger}
(a) In  the cases   (A1c), 
 (B1), (B1e), (B2), (B3) and
(B5)  in Lemma \ref{exte}
there exists the largest operation $K$  satisfying the conclusions.
Recall that  we say that some operation $K$ is \emph{larger} than
$J$ if $Kx \geq Jx$, for every $x$ in the domain. 
The largest operation is given
by the corresponding formulae in  the proof of Lemma \ref{exte}.
 For posets with a maximum, the largest operation
exists in cases (A1e) and (A2e), as well. In case  (A1e)   
 set
$Ka=Ga$ if $a \in D$
and $Ka$ to be the maximum of $\mathbf P$, otherwise.
 In case (A2e) set
$Ka=Ga$ if $a \in D$,
$Ka=a$ if $a=Gb$, for some $b \in D$,   
and $Ka$ to be the maximum of $\mathbf P$ in the remaining cases.  

Dually, in cases   (A1e),  (B1), (B1c), (B2), (B4), (B5)
there is the smallest operation,  given by the dual formulae.
 
(b) On the other hand, 
in case (A2) 
there does not necessarily exist the largest operation
satisfying the conclusion in Lemma \ref{exte}.
Consider a five elements lattice with maximum $1$, minimum $0$
and three more elements $a,b,c$ such that 
$a \vee b= 1$ and $a \wedge b= c$
(a ``diamond'' with a new bottom element added).
If $D= \{ 0,1,a,b \} $ and $G1=G0=0$, $Ga=a$ and
$Gb=b$, then we can extend   $G$ to an idempotent operation
by taking $Kc \in \{ 0, c,a, b \} $, but we cannot set
$Kc=1$, if $K$ extends $G$ and is idempotent. 
Hence there is no largest idempotent operation extending $G$.

 (c) In general, the largest operation does not exist
in case (A3), either. Consider a ``diamond'' with maximum $1$,
minimum $0$ and $a$, $b$ such that  
$a \vee b= 1$ and $a \wedge b= 0$. Let 
$D= \{ 1 \} $ and $G1=1$.  
 If  $K_{{\circ}}1=1$, $K_{{\circ}}b=0$,
 $K_{{\circ}}0=b$ and
  $K_{{\circ}}a=a$, then
  $K_{{\circ}}$ is an involution extending $G$.
Similarly, setting
$  K_{{\bullet}}a =0 $,
$  K_{{\bullet}}0 =a $,
$  K_{{\bullet}}b =b $ and
$  K_{{\bullet}}1 =1 $, we get
an involution extending $G$.
If $K$ is larger than both 
$K_{{\circ}}$ and $K_{{\bullet}}$,
then $K0=1$, 
thus K is not an involution, if $K$ extends $G$.  
\end{remark}

\begin{remark} \labbel{basta}
(a) We do not need the full assumption that $\mathbf P$  
is a bounded and complete lattice in
cases (B1e) and (B3) in
 Lemma \ref{exte}.
It is enough to assume that $\mathbf P$ 
is a poset such that, for every $x \in P$,
every subset of $ \{ \, y \in R  \mid x \leq y \, \} $
has a meet, where $R$ is the range of $D$. This
is some kind of a near-lattice completion.

If  $R$  
 is cofinal in $\mathbf P$, that is,
for every $x \in P$, there is  $y \in R$  
such that $x \leq y$, then     
it is enough to assume that, for every $x \in P$,
every nonempty subset of $ \{ \, y \in R  \mid x \leq y \, \} $
 has a meet.

The dual assumptions are enough to deal with 
cases (B1c) and (B4).

The completeness assumption can thus be weakened
as above in the corresponding cases in Corollary \ref{plap}(1)
and Theorem \ref{superap}.

(b)
On the other hand, some completeness assumption is
necessary in Lemma \ref{exte}, even in case (B3).   

 Consider a  poset $\mathbf P$  with a descending chain
$( c_n) _{n \in \mathbb N} $  
and three elements $a,b,d$
smaller than all the $c_i$s
and such that $a < b$, $a < d$
and $b$,  $d$ incomparable.     

Let $D= P \setminus \{ b \} $ and let 
$G:D \to P$ be defined by $Ga=d$
and $Gx=x$, for $x \in D \setminus \{ a \} $.
The function $G$ satisfies    \eqref{isoide};
however, $G$ cannot be extended to a closure operation
$K$ on the whole of $P$.
Indeed, since  $a< b $,  
we should have $ Kb \geq Ka=Ga=d$.
But we also want $Kb \geq b$, hence, since
$b$ and $d$ are incomparable, then $Kb=c_i$,
for some i. Then $c_i= Kb \leq  Kc _{i+1} = c _{i+1} $,
since  $b \leq c_{i+1}$  
 and $K$ should be isotone.
This is a contradiction, since we have assumed
 $c _{i+1} <c _{i} $. 

The counterexample works also for case (B1e),
since we have not used idempotence.      

(c) In the above counterexample $\mathbf P$
is not a lattice, but a more involved counterexample can be 
devised to treat the case when $\mathbf P$ is a bounded 
 (necessarily incomplete)    
 lattice.  The following example
has also the advantage of working for all cases 
(B1) - (B4)  

Let $F$ be the set of all the  finite subsets of $\mathbb N$,
$p$ the set of even natural numbers and 
$P= F \cup \{ \, p \cup f \mid   f \in F\, \} 
\cup \{ \,  c_i \mid  i \in \mathbb N\, \} $,
where the order among the subsets of $\mathbb N$
is inclusion and the $c_i$s are a descending chain
of new elements taken to be greater than all  the subsets of $\mathbb N$.
Thus $P$ becomes a bounded distributive lattice with maximum $c_0$
and minimum the empty subset of  $\mathbb N$.  

Set $D=F \cup \{ \,  c_i \mid  i \in \mathbb N\, \} $ 
and $Gf=[0, \max f]$, 
for $f \in F$,  and $Gc_i=c_i$, for every $i \in \mathbb N$.
Then  $G$ is extensive and satisfies \eqref{iso} - \eqref{isoidebis}.
On the other hand,  
$G$ cannot be extended to an  isotone  
 operation $K$ 
on $P$, since $p \geq \{ \, 0, 2, \dots, 2n   \, \} $,
for every $n \in \mathbb N$, hence we should have 
$Kp \geq K\{ \, 0, 2, \dots, 2n   \, \}=[0,2n] $,
for every $n \in \mathbb N$, hence $Kp = c_i$, for some $i$,
but then we get a contradiction arguing as in (b).

 In the present counterexample we have only used isotony of $K$,
hence the counterexample (or its dual) applies to all cases
(B1) - (B4). Moreover, $D$ and $G$ satisfy the stronger
condition that if $x \in D$, then $Gx \in D$.

(d) The counterexample in (c) can be adapted in order to work for case
(B5).  Let $\mathbf P^+$ be the lattice described in (c);
let $ P^- = \{ \, x^- \mid x \in P^+ \, \} $ be a disjoint copy of 
$ P^+$  endowed with the reversed order and
set $ P^*= P^+ \cup  P^-$, letting every element
of $P^-$ be smaller than every element of $P^+$.
Let $D^+=D$ as introduced in (c) and $D^-$ correspond to the copy of $D$
in $P^-$; then set $D^*= D^+ \cup D^-$.
Given the function $G$ introduced in (c),
let $G^*:D^*\to P^*$ be the function defined by
$G^*x = (Gx)^-$, for $x \in P^+$  and
$G^*(x^-) = Gx$, for $x^- \in P^-$.
Since $G$ satisfies \eqref{iso}, then  $G^*$
satisfies \eqref{ant}. An argument similar to the one in 
(c) shows that     $G^*$ cannot be extended to an antitone
operation on $\mathbf P^*$.

(e) By the comment in Example \ref{lopex}(d), 
and since the counterexample in (c) above
is a lattice,  some completeness assumption
is necessary in cases (C1) - (C3).
  
 \end{remark}

\begin{remark} \labbel{compa2}
 (a) In Lemma \ref{compa} case (A3) the comparability condition is not
necessarily preserved,
unless  the additional assumptions in Lemma \ref{compa} are
satisfied. Consider the four element chain $\mathbf P$ with
$a<b<c <d $, 
$D= \{  a,b \} $,
$G_{{\circ}}a=b$,
$G_{{\circ}}b=a$,
$  G_{{\bullet}}a =c $ and $  G_{{\bullet}} b=d$.

We have $G_{{\circ}}x \leq G_{{\bullet}}x$,
for $x \in D$. Moreover both $G_{{\circ}}$
and  $G_{{\bullet}}$, taken alone, satisfy
the conditions   \eqref{inv} and \eqref{invb}, hence, by Lemma \ref{exte},
both $G_{{\circ}}$
and  $G_{{\bullet}}$ can be extended to some involution.
However, it is not possible to extend them in such a way
that the comparability condition $K_{{\circ}}x \leq K_{{\bullet}}x$
is satisfied, since involutions are bijective, hence we must have
$K_{{\circ}}c \geq c$; on the other hand, since  $K_{{\bullet}}$
must be an involution extending $G_{{\bullet}}$, then
$K_{{\bullet}}c=a$, thus necessarily   $K_{{\bullet}}c < K_{{\circ}}c$.

Notice that in the above example we have 
that $x \in D$ implies $G_{{\circ}}x \in D$
(not so for $G_{{\bullet}}$, of course, otherwise 
Lemma \ref{compa} would be contradicted).    

(b)  Without further assumptions, in cases (B2) and    
(B3)
 comparability conditions
are not necessarily preserved \emph{by the operations
defined  in the proof of Lemma \ref{exte}} 
 (but, as shown in Lemma \ref{compa},  we can maintain
comparability by introducing different operations).  

Let $P= \{ d,p,q \} $ with $d<p<q$,
$D= \{ d \} $ 
and let  $G_{{\circ}}d=d$,  $G_{{\bullet}}d=p$.
We have $G_{{\circ}}d \leq G_{{\bullet}}d$;
however, if $K_{{\circ}}$ and $K_{{\bullet}}$ are correspondingly defined 
by \eqref{gkclo}, then  $K_{{\circ}}p = q \not\leq p =K_{{\bullet}} p$.
 Similarly, if $H_{{\circ}}$ and $H_{{\bullet}}$ are
correspondingly defined by \eqref{kisoide1} in the
proof of case (B2), then  
$H_{{\circ}}d=d$, $H_{{\circ}} p=H_{{\circ}} q = q$
and
$H_{{\bullet}}d=H_{{\bullet}}p =p$, $H_{{\bullet}}q =q$.
In both cases $HHx=Hx$, for every $x$,
hence no iteration is needed, and   
$ K_{{\circ}}p= H_{{\circ}}p = q \not\leq p =H_{{\bullet}}p = K_{{\bullet}} p$,
thus the example works also for case (B2).

\arxiv{(c) In Lemma \ref{compa}, cases (B2) and (B3), 
the comparability conditions are satisfied 
by the operations
defined in the proof of Lemma \ref{exte},
under the additional assumption  that
$G_{{\bullet}}b \in D$, for every $b \in D$.
In the dual  case (B4) we need  assume instead that
$G_{{\circ}}b \in D$, for every $b \in D$.

We first prove the above claim in  case (B3).
Suppose that  $b \in D$.
Since, by assumption, $G_{{\circ}}x \leq G_{{\bullet}}x$, for every 
$x \in D$,
then  $ G_{{\circ}}G_{{\bullet}}b \leq G_{{\bullet}}G_{{\bullet}}b $,
by taking $x=G_{{\bullet}}b$ and 
since $ G_{{\bullet}}b \in D$.
Since  $G_{{\circ}}$ is assumed to be extensive, 
we get
 $G_{{\bullet}}b \leq G_{{\circ}}G_{{\bullet}}b \leq G_{{\bullet}}G_{{\bullet}}b \leq G_{{\bullet}}b$, where the last
inequality is obtained by
applying \eqref{isoide} to $G_{{\bullet}}$ with $G_{{\bullet}}b$
in place of $a$,
and using again the assumption that $G_{{\bullet}}b \in D$.

We have proved that $G_{{\bullet}}b  = G_{{\circ}}G_{{\bullet}}b  $, for all
 $b \in D$.
Now fix $x \in P$ and let $b$ vary in $D$. 
Whenever  $b$ is such that  $ x \leq G_{{\bullet}}b$,
then $x \leq  G_{{\bullet}}b =G_{{\circ}}G_{{\bullet}}b$  and, since  
$G_{{\bullet}}b \in D$,
we get $ \{ \,  G_{{\bullet}}b  \mid b\in D, x \leq G_{{\bullet}}b  \,\}
\subseteq 
\{ \,  G_{{\circ}}a  \mid a\in D, x \leq G_{{\circ}}a  \,\}$,
by considering $a= G_{{\bullet}}b$.
This implies  
$K_{{\circ}}x \leq K_{{\bullet}}x$.

We now consider case (B2).  We first compare
 $K_{{\circ}}$ and $H_{{\bullet}}$,
as defined by \eqref{kisoide1}, that is   
\begin{align}  
\labbel{kisoide1bulle}
   H_{{\bullet}}x &= 
\prod \{ \,  G_{{\bullet}}b  \mid b\in D \text{ and either }
  x \leq b \text{ or }  x \leq G_{{\bullet}}b  \,\}.
 \end{align}

Let us fix some $x \in P$. 
If, for some $b \in D$,  $ G_{{\bullet}}b$ belongs to the set in  
 \eqref{kisoide1bulle} because  $x \leq b$ holds,
then  $K_{{\circ}}x \leq K_{{\circ}}b =G_{{\circ}}b 
\leq  G_{{\bullet}}b$, since
$K_{{\circ}}$ is isotone, $K_{{\circ}}$ extends
$G_{{\circ}}$, 
and by the comparability assumption relating   
$G_{{\circ}} $ and $   G_{{\bullet}}$. 
 On the other hand, suppose that
 $ G_{{\bullet}}b$ belongs to the set in  
 \eqref{kisoide1bulle} because of $x \leq G_{{\bullet}}b$.
Since, by assumption, 
$ G_{{\bullet}}b \in D$,
then $K_{{\circ}}G_{{\bullet}}b=
G_{{\circ}}G_{{\bullet}}b$, hence 
$K_{{\circ}}x \leq K_{{\circ}}G_{{\bullet}}b=
G_{{\circ}}G_{{\bullet}}b \leq G_{{\bullet}}G_{{\bullet}}b
\leq G_{{\bullet}}b$, 
where, as in the case (B3), the last
inequality is obtained by
applying \eqref{isoide} to $G_{{\bullet}}$ with $G_{{\bullet}}b$
in place of $a$.   
Since we have showed $K_{{\circ}}x \leq  G_{{\bullet}}b$,
for every $b$ in the defining set for   $H_{{\bullet}}x$, we get
\begin{equation}\labbel{bago}    
K_{{\circ}}x \leq H_{{\bullet}}x.
   \end{equation}
Since $x$ was arbitrary in the above argument,
the inequality \eqref{bago}  holds for every $x \in P$. 

Since $K_{{\circ}}$ is isotone and idempotent,
then  $K_{{\circ}}x=K_{{\circ}}K_{{\circ}}x
 \leq K_{{\circ}}H_{{\bullet}}x
 \leq H_{{\bullet}}H_{{\bullet}}x$, for every $x \in P$, 
where we have applied
 \eqref{bago} twice, in the last inequality
 with $H_{{\bullet}}x$ in place of $x$.
Iterating, we get    
$K_{{\circ}}x \leq (K_{{\bullet}}) _{ \alpha } x$,
for every ordinal $\alpha$, 
where $(K_{{\bullet}}) _{ \alpha }$ denotes
the $\alpha$th stage of the construction of $K_{{\bullet}}$
from $H_{{\bullet}}$. Since 
$K_{{\bullet}} = (K_{{\bullet}}) _{ \alpha }$, for some $\alpha$, we get 
$K_{{\circ}}x \leq K_{{\bullet}} x$.
} 
\end{remark}

\arxiv{
\begin{remark} \labbel{necess} 
In the proof of Lemma \ref{exte} case (B2)
it is necessary to iterate  $H$.
Suppose that $b_1^1 \wedge b_2^1 =x$,
$b_1^1,  b_2^1 \in D$,  
$Gb_1^1<b_1^1 $, $ Gb_2^1 < b_2^1$
and  $Gb_1^1 \wedge  Gb_2^1 < x$.
According to \eqref{kisoide1}, we have
$Hx \leq Gb_1^1 \wedge  Gb_2^1$, and we might assume to be in the
situation in which   $Hx = Gb_1^1 \wedge  Gb_2^1$.
It might happen that 
$b_1^2 \wedge  b_2^2 =Hx$, 
$Gb_1^2<b_1^2 $, $ Gb_2^2 < b_2^2$
and  $Gb_1^2 \wedge  Gb_2^2 < Hx$.
for certain $b_1^2,  b_2^2 \in D$
incomparable with $b_1^1 $ and $  b_2^1 $.  
The above relations entail 
$HHx \leq Gb_1^2 \wedge  Gb_2^2 < Hx$.

The above construction can be iterated transfinitely in order to 
get examples in which $K^{ \alpha+1}x< K^ \alpha x $,
for an arbitrarily large ordinal $\alpha$. 
\end{remark}

\begin{remark} \labbel{isotinv}
In the situation
described in Lemma \ref{exte}
possible conditions for the existence of extensions
of an isotone involution  will necessarily be
much more involved. 
In fact, if $a \leq b$ and $K$ is an isotone involution,
then the order interval $[a,b]$
is isomorphic to  $[Ka,Kb]$.

A similar remark  applies to extensions of 
an antitone involution.
 \end{remark}

\begin{remark} \labbel{noc3}
The analogue of
Corollary \ref{plap} generally fails for case (C3). 
Let $\mathbf Q$ be the diamond with four elements
$0$, $1$, $a$, $b$, with  $a$ and $b$
not comparable.
Let $F$ be the binary function 
 $F(x,y) = x \wedge y$, which is isotone on
both components.
If  $\mathbf Q$ is obtained by  adding a new element $c$
with $0<c<a,b$, then the inclusion
is an order-embedding (not a lattice-embedding),
 $ x \wedge y \leq F(x,y) $ holds by construction in $\mathbf Q$,
but $a \wedge b =c >0=F(a,b)$ in $\mathbf P$.  
    
On the other hand, 
Corollary \ref{plap}(3-case of meets)(4)
apply also in  case (C3). 
 \end{remark}

 \begin{example} \labbel{apnofin}
(a) Let $T$ be the theory  of posets asserting
that, for every $n \in \mathbb N$,  
if there are less than $n$ elements, then the poset
is linearly ordered.
$T$ has the superamalgamation property,
the class of finite models of $T$ has the strong amalgamation property,
but not the superamalgamation property.

If $T^+$ extends $T$ in a language with two further
unary operations and asserts that the operations are isotone,
then $T^+$ has the superamalgamation property, but
the class of finite models of $T^+$ has not the amalgamation property.   

(b) The above theories are  not universal.
Let $T'$ be a theory in a language with 
two unary relation symbols $U$, $V$, and a binary
function $f$. The theory $T'$ asserts that     
if $U(x)$ and $V(y)$ hold, then all the elements   
$x$, $y$, $f(x,y)$,   $f(x,f(x,y))$, 
$f(x,f(x,f(x,y)))$\dots\  are distinct.
$T'$ has the strong amalgamation property,
but the class of finite models of $T'$ has not the  amalgamation property.

(c) Let $T''$ be as above, with a further
binary relation satisfying the axioms of posets.
Since there is no axiom connecting
$\leq$ with $U$, $V$ and $f$,
then $T''$ has the superamalgamation property.
$T''$ is universal and has JEP.
On the other hand, the class of finite models of $T''$
has neither JEP, nor AP; in particular,
it has not a Fra\"\i ss\'e limit.    
 \end{example}    
    
} 

 It is an open problem
whether the results in the present paper generalize 
to unary operations satisfying $K^n (x) = K x$, or
 $K^n (x) = x$, or, more generally,
$K^n (x) = K^m ( x)$, for some $m,n \in \mathbb N$.
Is it possible to consider more relationships connecting distinct
operations, other than comparability? 
For example,  do the results in the present paper generalize 
when adding two commuting unary operations, that is, satisfying
$K_{{\circ}} K_{{\bullet}} x = K_{{\bullet}}K_{{\circ}} x $?

\smallskip 

\emph{Acknowledgements.} We thank the referee for many useful
comments and for detecting some inaccuracies.

\section{Appendix. 
Superamalgamation into union} \labbel{supapu}

In this appendix  we deal with the case when   the superamalgamating structure can be taken
over the set-theoretical union of the base sets of the models to be amalgamated.
In this case no completion hypothesis is necessary and
the framework is slightly more general, to the effect that
we can work with a transitive binary relation, not
necessarily an order.

\smallskip 

There are  situations in which the completion hypothesis
(2) in Theorem \ref{superap} is not needed. 
First,  cases (A1e) - (A3)
are really elementary and do not need
the assumption (2), since no completeness assumption is necessary 
in the proof of Lemma \ref{exte} in such cases.

More interestingly, we do not need completions 
when the superamalgamating structure can be taken over the set 
$D= A \cup B$. 
In this situation we can work with arbitrary transitive relations in
place of orders, and we can also get a few additional results.
Notice that the definitions in Section \ref{prel},
in particular, the definition of superamalgamation,
apply to an arbitrary binary relation in place of $\leq$.

 The extension Lemma \ref{exte} is not needed
in the rest of the present section. 

\begin{definition} \labbel{supu}
Suppose that $\mathcal S$ is   a class of  
structures for the same language and with a specified  
binary relation $R$.
We say that $\mathcal S$ 
has the \emph{superamalgamation property into union} if,
under the assumptions in Definition \ref{sap}
(with $R$ in place of $\leq$), a superamalgamating 
structure exists over the set $D=A \cup B$.   
\end{definition}

Many examples of classes with  the   superamalgamation property into union
are presented in \cite{apu}, for example, the classes of models
with a binary relation, possibly satisfying some properties
chosen among transitivity, reflexivity, symmetry, antireflexivity, antisymmetry.
Other examples are classes with two binary 
transitive relations, one coarser than the other,
each satisfying some set of the above properties.

\begin{proposition} \labbel{solorel} 
Suppose that $\mathcal S$ is   a class of  
structures (or $T$ is a theory) with a specified  
binary relation $R$.
  \begin{enumerate}[(a)]   
 \item 
 If $\mathcal S$ is  
in  a language
without  operations of arity $\geq 2$ and  
$\mathcal S$ is closed under taking substructures,
then $\mathcal S$ satisfies the superamalgamation property  
into union
if and only if  $\mathcal S$ satisfies the superamalgamation property.
\item
If $T$ is a theory in some language $\mathscr L$, 
$\mathscr L' \supseteq \mathscr L$   
and $T$ has the superamalgamation property 
into union, then the class of models of $T$
in the language $\mathscr L'$    has the superamalgamation property 
into union.
\item
Suppose that  $\Sigma $ is a set of 
universal-existential sentences
in which at most one variable
 is bounded by the universal quantifier.
If $\mathcal S$ is a class of structures with
the superamalgamation property 
into union, then the class 
of all structures in $\mathcal S$ which satisfy 
$\Sigma$ has the superamalgamation property 
into union.
\item
Suppose that $( T_i) _{i \in I} $
is a sequence of theories in  languages $\mathscr L_i$,
and suppose that  $ \mathscr L_ i \cap \mathscr L _j = \{ R \} $,
for $i \neq j \in I$, 
where $R$ is a binary relation symbol.

If each $T_i$  has  the superamalgamation property 
into union
and asserts that $R$ is transitive, then $T=\bigcup_{i \in I} T_i$
has the superamalgamation property 
into union.
  \end{enumerate} 
\end{proposition}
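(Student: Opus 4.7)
My plan is to reduce parts (a)--(c) to straightforward manipulations of an already given superamalgamation, and to invest the main work in part (d), which depends on a uniqueness property for the amalgamating relation forced by transitivity.

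Part (a): one direction is trivial (superamalgamation into union is itself a superamalgamation), so only the reverse needs attention. Given a superamalgamation $\mathbf D$ of $\mathbf A$ and $\mathbf B$ over $\mathbf C$, I would pass to the substructure of $\mathbf D$ on $A \cup B$. Because all operation symbols are unary (or nullary) and $\mathbf A, \mathbf B$ are substructures of $\mathbf D$, the set $A \cup B$ is closed under the operations of $\mathbf D$; constant interpretations lie in $C \subseteq A \cup B$ since $\mathbf C \subseteq \mathbf A$ forces them to belong to $C$. Closure of $\mathcal S$ under substructures places the resulting object in $\mathcal S$, and clauses (a),(b) of Definition \ref{sap} survive restriction since they mention only elements of $A \cup B$ and a witness in $C$.

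Part (b) is handled by extending a superamalgamation of the $\mathscr L$-reducts to the richer language: new $n$-ary relations are interpreted as $R^{\mathbf A} \cup R^{\mathbf B}$; new operations take the value of $f^{\mathbf A}$ on tuples entirely in $A$, the value of $f^{\mathbf B}$ on tuples entirely in $B$, and an arbitrary element of $A \cup B$ on mixed tuples. Consistency on $C$-tuples follows from $C = A \cap B$ together with the substructure assumption on the new symbols; constants are forced to lie in $C$. Since $T$ does not mention the new symbols, $\mathbf D \models T$. Part (c) takes an unchanged superamalgamation from the hypothesis: for each $\forall x \exists \bar y\, \phi(x,\bar y) \in \Sigma$ with $\phi$ quantifier-free and each $d \in A \cup B$, choose a component ($\mathbf A$ or $\mathbf B$) containing $d$, pick a witness $\bar y$ there using that the component models $\Sigma$, and transfer along the embedding, which preserves quantifier-free formulas. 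The one-variable hypothesis is essential precisely because it guarantees that $d$ lies wholly in $\mathbf A$ or wholly in $\mathbf B$.

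The main obstacle is part (d). The central observation, which I would prove first, is that for a transitive relation the superamalgamating interpretation on $A \cup B$ is uniquely determined: on $A^2$ and $B^2$ it must coincide with $R^{\mathbf A}$ and $R^{\mathbf B}$, while for mixed pairs $a \in A \setminus B$, $b \in B \setminus A$, clauses (a),(b) of Definition \ref{sap} together with transitivity and the embedding properties force $a\, R^{\mathbf D} b$ exactly when some $c \in C$ satisfies $a\, R^{\mathbf A} c\, R^{\mathbf B} b$, and symmetrically. Applied to each $T_i$ separately, this yields superamalgamations $\mathbf D_i$ on $A \cup B$ whose interpretations of $R$ all coincide. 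Since the languages $\mathscr L_i$ are otherwise disjoint, I can assemble a single structure $\mathbf D$ on $A \cup B$ in the language $\bigcup_i \mathscr L_i$ by borrowing each non-$R$ symbol from the corresponding $\mathbf D_i$ and using the common relation for $R$; the $\mathscr L_i$-reduct of $\mathbf D$ is then $\mathbf D_i$, so $\mathbf D$ models every $T_i$ and hence $T$, and the superamalgamation clauses, which concern only $R$, transfer from any single $\mathbf D_i$.
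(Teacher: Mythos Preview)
Your proposal is correct and follows essentially the same approach as the paper, which merely calls the proof ``elementary,'' refers to \cite{apu} for details, and singles out for part (d) precisely the observation you develop: that transitivity of $R$ forces the superamalgamating interpretation of $R$ on $A\cup B$ to be uniquely determined, so the separate amalgams $\mathbf D_i$ can be glued. Your treatment of (a)--(c) fills in the straightforward details the paper omits; the only cosmetic issue is that in (b) you reuse the letter $R$ for a generic new relation symbol, which clashes with the distinguished relation---rename it to avoid confusion.
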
 

The proof of Proposition \ref{solorel}
is elementary, but the proposition is useful.
Cases (a) - (c) hold for the
strong amalgamation property, as well. 
See \cite{apu} for full details.
As far as (d) is concerned, notice that, since $R$ 
is assumed to be transitive, the superamalgamation
property determines the interpretation of $R$
on $A \cup B$. 

If $\mathbf A$ is a structure with a binary relation $R$,
we shall write $a \mathrel { R} b $ in place
of $R(a,b)$ or $(a,b) \in R$. A unary operation $K: A \to A$
is    \emph{$R$-isotone} (or \emph{$R$-preserving})
if $a \mathrel { R} b $ implies $Ka \mathrel { R} Kb $,
for every $a, b \in A$. The
operation $K$
is    \emph{$R$-antitone} (or \emph{$R$-reversing})
if $a \mathrel { R} b $ implies $Kb \mathrel { R} Ka $,
for every $a, b \in A$.

\begin{theorem} \labbel{super}
Suppose that $\mathcal S$ is   a class of  
structures with a  transitive 
binary relation $R$, and 
 $\mathcal S$ 
has the superamalgamation property into union.

If $\mathcal S_1$ is the class of expansions of structures 
of $\mathcal S$ obtained by adding
an $R$-isotone (an $R$-antitone) unary operation, 
then $\mathcal S_1$
has the superamalgamation property into union, in particular
the strong amalgamation property.

More generally, the same applies when  $\mathcal S_1$ is 
obtained by adding   families of such operations,
possibly adding  a set of comparability conditions.
Moreover, for every set 
$\Sigma $  of 
universal-existential sentences
in which at most one variable
 is bounded by the universal quantifier,
the class 
of all structures in $\mathcal S_1$ which satisfy 
$\Sigma$ has the superamalgamation property 
into union.
 \end{theorem}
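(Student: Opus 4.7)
The plan is to argue exactly as in Theorem~\ref{superap}, but to exploit the hypothesis that the superamalgamating structure $\mathbf F^-$ can be taken with base set $F = A \cup B$. The crucial payoff is that the partial function produced by formula~\eqref{kd} is already total on $F$, so no analogue of Lemma~\ref{exte} is needed and no completion assumption has to be invoked.

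Concretely, given $\mathbf A, \mathbf B, \mathbf C \in \mathcal S_1$ with $C = A \cap B$, first pass to the $\mathcal S$-reducts $\mathbf A^-, \mathbf B^-, \mathbf C^-$ and use the superamalgamation-into-union hypothesis to obtain $\mathbf F^- \in \mathcal S$ with base set $F = A \cup B$ superamalgamating them. Then define $K_{\mathbf F}: F \to F$ by the two-case formula~\eqref{kd}. The definition is unambiguous on $C$ because $\mathbf C$ is a substructure of both $\mathbf A$ and $\mathbf B$ in $\mathcal S_1$, so $K_{\mathbf A}$ and $K_{\mathbf B}$ restrict to the common operation $K_{\mathbf C}$. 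By construction, $K_{\mathbf F}$ extends both $K_{\mathbf A}$ and $K_{\mathbf B}$, hence the inclusions $\mathbf A \subseteq \mathbf F$ and $\mathbf B \subseteq \mathbf F$ are $\mathcal S_1$-embeddings, and since $R$ is unchanged from $\mathbf F^-$, the superamalgamation property carries over verbatim to $\mathcal S_1$ once we know $K_{\mathbf F}$ is $R$-isotone (respectively $R$-antitone).

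The only real verification is thus $R$-isotony of $K_{\mathbf F}$. For $d, e \in F$ with $d \mathrel{R_{\mathbf F}} e$, when $d, e \in A$ or $d, e \in B$ the conclusion is immediate from $R$-isotony of $K_{\mathbf A}$, respectively $K_{\mathbf B}$, together with the fact that the inclusions are embeddings. For the mixed case $d \in A \setminus B$ and $e \in B \setminus A$ (and symmetrically), the superamalgamation clause supplies some $c \in C$ with $d \mathrel{R_{\mathbf A}} c$ and $c \mathrel{R_{\mathbf B}} e$; then
\begin{equation*}
K_{\mathbf F} d = K_{\mathbf A} d \mathrel{R} K_{\mathbf A} c = K_{\mathbf C} c = K_{\mathbf B} c \mathrel{R} K_{\mathbf B} e = K_{\mathbf F} e
\end{equation*}
in $\mathbf F$, and transitivity of $R$ in $\mathbf F^-$ closes the chain. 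The $R$-antitone case is entirely dual: the same $c$ yields $K_{\mathbf A} c \mathrel{R} K_{\mathbf A} d$ and $K_{\mathbf B} e \mathrel{R} K_{\mathbf B} c$, and again transitivity gives $K_{\mathbf F} e \mathrel{R} K_{\mathbf F} d$.

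For families of operations, apply the above construction in parallel to each operation, noting that the construction of $\mathbf F^-$ does not depend on any $K$. Comparability conditions of the form $K^{(i)} x \mathrel{R} K^{(j)} x$ are preserved pointwise: if they hold in both $\mathbf A$ and $\mathbf B$, then by~\eqref{kd} they hold for every $d \in F = A \cup B$. Finally, the $\Sigma$-clause is obtained immediately by composing the above with Proposition~\ref{solorel}(c). The main obstacle, if any, is essentially nonexistent: the whole force of the ``into union'' assumption is precisely to bypass the transfinite fixed-point iteration of Lemma~\ref{HH} that had to be performed in the proof of Theorem~\ref{superap}, and the mixed-case verification reduces to a one-line application of transitivity.
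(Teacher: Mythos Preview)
Your proof is correct and follows essentially the same approach as the paper's own proof: reduce to the $\mathcal S$-reducts, superamalgamate into the union, define $K$ by the two-case formula, verify isotony via the superamalgamation clause plus transitivity in the mixed case, and invoke Proposition~\ref{solorel}(c) for the $\Sigma$-clause. The only differences are cosmetic (you write $\mathbf F$ where the paper writes $\mathbf D$, and you make explicit that superamalgamation---not just strong amalgamation---is inherited because $R$ is unchanged).
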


 \begin{proof}
Given $\mathbf A$, $\mathbf B$, $\mathbf C$ 
as in Definition \ref{sap}, their $\mathcal S$-reducts
can be amalgamated into a structure 
$\mathbf D^-$ over $D=A \cup B$,
since the superamalgamation property is into union. 
Define $K$ on $D$ 
as in the proof of Theorem \ref{superap}, namely
\begin{equation}\labbel{kdd}
Kd =\begin{cases} 
K_{\mathbf A} d &    \text{if  $ d \in A  $},
\\ 
K_{\mathbf B} d &    \text{if  $ d \in B  $.}
 \end{cases} 
\end{equation}  

We only need to check that, when $\mathbf D^-$
is expanded by adding such a $K$, $R$-isotony is maintained.
Indeed, if $a, b \in A$ and
 $a \mathrel { R} b $, then
 $Ka \mathrel { R} Kb $, by $R$-isotony
on $\mathbf A$. The case when $a,b \in B$ 
 is similar.
Otherwise if, say, $a \in A \setminus B$ 
and $b \in B \setminus A$,
then, by superamalgamation, there exists
$c \in C= A \cap B$ such that $a \mathrel { R _{ \mathbf A} } c $
and $c \mathrel { R _{ \mathbf B} } b $.
By $R$-isotony of $K _{ \mathbf A}$ and $K _{ \mathbf B}$  
on $\mathbf A$ and $\mathbf B$,
we get
 $K _{ \mathbf A} a \mathrel { R _{ \mathbf A} } K _{ \mathbf A} c $
and $K _{ \mathbf B}c \mathrel { R _{ \mathbf B} } K _{ \mathbf B}b $,
that is,
 $Ka \mathrel { R _{ \mathbf D}} Kc $
and 
 $Kc \mathrel { R _{ \mathbf D}} Kb $,
according to  the definition \eqref{kdd}  of $K$, and since $\mathbf D$ 
extends $\mathbf D^-$ which amalgamates
the   $\mathcal S$-reducts of 
$\mathbf A$, $\mathbf B$ and $\mathbf C$. 
Since $\mathbf D^-$ belongs to 
 $\mathcal S$, then $R _{ \mathbf D}$ is transitive, thus we get
  $Ka \mathrel { R _{ \mathbf D}} Kb $.

The case of $R$-antitony is similar.

As for the last paragraph, and as in the proof of  
 Theorem \ref{superap},  we can add many new operations at a time,
since they do not influence each other. 
Clause \eqref{kdd} clearly preserves comparability 
conditions which already hold in $\mathbf A$ and $\mathbf B$.
The last statement follows
from Proposition \ref{solorel}(c). 
\end{proof}  

Though simple (and simply proved),
Proposition \ref{solorel}(c) is quite powerful.
For example, we can add conditions asserting that,
 $x \mathrel { R } Kx $, for every $x$, or, possibly,
 $Kx=KKx$, or $x=KKx$.
If we define recursively  $K^n$ 
by $K^0x=x$ and $K ^{n+1}x=KK^n x $,
we can add conditions of the form
  $K^nx \mathrel { R } K^mx $, or
$K^nx = K^mx $, for some fixed $m$ and $n$.
When dealing with more operations, all the universal closures 
of the following formulae can be taken in $\Sigma$    
 in \ref{solorel}(c): $KH x = HK x$, $K^mH^n x = H^nK^m x$,
 $K^1K^2 x \mathrel { R }  K_3 x$, 
$K^1K^2K_3 x \mathrel { R }  K^2K_4 x$, etc.

The assumption that $R$ is transitive is necessary in 
Theorem \ref{super}.
The  class of structures with a transitive relation $S$ and
a coarser binary relation $R$ has the superamalgamation property 
into union, with respect to $R$, but if a unary $R$-isotone operation is added,
 then the amalgamation property is lost. See \cite[Theorem 4.1(A)(C)]{apu}
in a slightly different terminology and
with the role of $R$ and $S$ exchanged.

\begin{corollary} \labbel{fr}
Suppose that $T$ is a universal theory in a finite relational language
 $\mathscr L$ containing a binary relation symbol $R$,
$T$ asserts that $R$ is transitive and $T$ has the superamalgamation property.

Let $T_1$ be the extension of $T$  in the language $\mathscr L \cup \{ K \} $
obtained by adding an axiom saying that
the unary operation  $K$ is 
idempotent (idempotent and isotone,
idempotent and extensive, a closure operation).
We can also add a finite number of such operations,
under the further assumption that they pairwise commute.

Then $T_1$ has model completion. 
\end{corollary}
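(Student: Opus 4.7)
The plan is to verify the hypotheses of Proposition~\ref{folk} for $T_1$ and then invoke it. Since $T$ is universal and $T_1$ adjoins only universal axioms (idempotence, extensiveness, isotony, commutativity), $T_1$ is a universal first-order theory in the finite language $\mathscr L^+ := \mathscr L \cup \{K_1,\dots,K_n\}$; consistency is ensured (under the same convention used in Corollary~\ref{smk}) by the empty structure, in which all universal axioms hold vacuously. It thus remains to establish (i) the superamalgamation property (hence AP) for $T_1$, (ii) local finiteness of $T_1$, and (iii) JEP for $T_1$.

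For (i), I would first upgrade the given superamalgamation property of $T$ to superamalgamation \emph{into union}: by Proposition~\ref{solorel}(a), these are equivalent, since $\mathscr L$ is relational (a fortiori has no operations of arity $\geq 2$) and $T$ is universal (hence the class of models is closed under substructures). I would then build $T_1$ step by step. For each $K_i$ whose required property involves $R$-isotony (the cases ``idempotent and isotone'' and ``closure operation''), apply Theorem~\ref{super} to adjoin $K_i$ as an $R$-isotone operation, preserving superamalgamation into union; transitivity of $R$ is untouched at each step, so the theorem may be iterated. For the remaining cases (``idempotent'' alone and ``idempotent and extensive''), use Proposition~\ref{solorel}(b) to merely expand the language by $K_i$ without axioms, again preserving superamalgamation into union. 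Finally, impose the residual axioms
\[
\forall x\,(K_iK_ix = K_ix),\qquad \forall x\,(x\,R\,K_ix),\qquad \forall x\,(K_iK_jx = K_jK_ix),
\]
each of which is universal with a single universally quantified variable. Proposition~\ref{solorel}(c) then preserves superamalgamation into union under these additional axioms, yielding SAP, hence AP, for $T_1$.

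For (ii), any submodel of a model of $T_1$ generated by $\{a_1,\dots,a_m\}$ consists of elements of the form $K_{i_1}K_{i_2}\cdots K_{i_r}a_j$; using pairwise commutativity and idempotence of the $K_i$, every such term reduces to one with $i_1<\dots<i_r$ distinct, leaving at most $2^n$ prefixes and hence at most $m\cdot 2^n$ elements in the generated submodel, which proves uniform local finiteness (with $g_{_{T_1}}(m)=m\cdot 2^n$). For (iii), since the empty structure is a model of $T_1$, one obtains JEP by amalgamating any two models of $T_1$ over the empty common substructure via AP. All hypotheses of Proposition~\ref{folk} are now in place, giving the model completion of $T_1$.

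The main obstacle is the orchestration in step (i): the $R$-isotony axiom $\forall xy\,(x\,R\,y\to K_ix\,R\,K_iy)$ has two universally bound variables and therefore is \emph{not} eligible for Proposition~\ref{solorel}(c); it must instead be baked in at the moment each $K_i$ is introduced, via Theorem~\ref{super}. A secondary delicate point is the JEP argument: if one works in a convention excluding empty structures, one should instead invoke the observation following Corollary~\ref{corsuperap}(3) that, since $\mathscr L^+$ has no constants and $T_1$ is universal, the $\emptyset$-generated substructure (when required to exist) is uniquely determined up to isomorphism, so all models of $T_1$ lie in a single co-embeddability class and AP implies JEP within that class.
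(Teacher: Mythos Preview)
Your proof is correct and follows essentially the same route as the paper: upgrade superamalgamation to superamalgamation into union via Proposition~\ref{solorel}(a), apply Theorem~\ref{super} (together with Proposition~\ref{solorel}(b)(c) for the non-isotone clauses, exactly as you orchestrate) to get AP for $T_1$, observe local finiteness from idempotence and pairwise commutativity, and conclude.

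The one difference worth noting is that the paper invokes Wheeler's theorem \cite{Wh} (every universal locally finite theory in a finite language with AP has a model completion), which does \emph{not} require JEP, whereas you route through Proposition~\ref{folk}, which does. Your JEP argument via the empty structure is valid under the paper's conventions (see the proof of Corollary~\ref{corsuperap}(3)), and it buys you the extra information that the model completion is $\omega$-categorical with quantifier elimination. Conversely, the paper's approach avoids the JEP discussion altogether; indeed, the remark immediately following Corollary~\ref{fr} treats JEP as an additional hypothesis needed only for the Fra\"\i ss\'e limit, not for the model completion itself. Your fallback argument for JEP when empty structures are disallowed is not quite right, though: with no constants there is no nonempty $\emptyset$-generated substructure, so the co-embeddability-class observation does not apply. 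In that convention you would genuinely need Wheeler's result instead of Proposition~\ref{folk}.
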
 

\begin{proof}
Since $T$ is universal in a relational language
and  $T$ has  the superamalgamation property,
then $T$ has the superamalgamation property into union, by 
Proposition \ref{solorel}(a). 
Then, by Theorem \ref{super}, $T_1$ has the amalgamation property.
Since $\mathscr L$ is relational and, in each case,
$K$ is idempotent
(and, if there are more operations, they 
pairwise commute), then $T_1$ is locally finite.
The conclusion follows from the well-known 
fact that every universal locally finite theory
with the amalgamation property in a finite language has model completion,
e.~g., \cite{Wh}. 
\end{proof}  

If $T_1$ in Corollary \ref{fr} has the joint embedding
property, we also get a Fra\"\i ss\'e model, arguing as in 
Corollary \ref{corsuperap} (3).   

The assumption that $\mathscr L$ is in a relational language in 
Corollary \ref{fr} can be somewhat relaxed; it is enough to
assume that  $T$ is locally finite with 
the superamalgamation property into union and 
that $T_1$ remains locally finite.

\renewcommand\refname{Additional References}

\end{document}